\theoremstyle{plain}
\numberwithin{equation}{section}
\newtheorem{thm}{Theorem}[section]
\newtheorem{theorem}[thm]{Theorem}
\newtheorem{lemma}[thm]{Lemma}
\newtheorem{corollary}[thm]{Corollary}
\newtheorem{property}[thm]{Property}
\newtheorem{remark}[thm]{Remark}
\newtheorem{definition}[thm]{Definition}
\newtheorem{example}[thm]{Example}
\DeclareMathOperator{\lcm}{lcm}
\begin{document}

\title{Ring extension of entire ring with conjugation;
arithmetic in entire rings}

\author{Alexandre Laugier}

\email{laugier.alexandre@orange.fr}

\begin{abstract}
Some basic properties of the ring of integers $\mathbb{Z}$ are
extended to entire rings. In particular, arithmetic in
entire principal rings is very similar than arithmetic in the ring of integers
$\mathbb{Z}$. These arithmetic 
properties are derived from a $\star$-ring extension of the considered
entire ring (ring extension with conjugation) equipped with a real
function which is a multiplicative structure-preserving map between
two algebras. The algebra of this ring  
extension is studied in detail. Some examples of such ring extension
are given.
\end{abstract}

\maketitle

\tableofcontents

\newpage

\subsection*{{Preface}}

Arithmetic on entire rings 
which are equipped with a relation of total
order and where a relation of divisibility is defined, 
is very similar than arithmetic on the ring of integers
$\mathbb{Z}$. Indeed, the standard arithmetic properties 
in $\mathbb{Z}$ can be extended to these algebraic structures. But,
when the relation of order\index{relation of order} on elements of an
entire ring, is not
total, some arithmetic properties fails to be 
true. In particular, the greatest common divisors ($\gcd$) and the least
common divisor ($\lcm$) of two
elements in this algebraic structure, 
are not always defined. Then, the group of units of an
entire ring may be larger than in $\mathbb{Z}$. Moreover, an
equivalence relation\index{equivalence relation} 
$\sim$ defined on entire ring $A$,
which involves identification of elements of $A$ which can be deduced from
one to another 
by a global multiplicative unit, implies a quotient set
$A/\!\!\sim$\index{quotient set}. We say that two elements $a$ and $b$
of $A$ are equivalent  
if there exists a unit $u$ of $A$ such that $b=ua$. An element of
the quotient set $A/\!\!\sim$ is an equivalence
class\index{equivalence class} associated to
an element $a$ of $A$ which is denoted $[a]$ and which is defined by:
$$
[a]=\{b\in A\,:\,\exists\,u\in\,[1],\,b=ua\}
$$
where $[1]$ consists of all the units of $A$. The element $a$ of $A$
is said a representative of its equivalence class.
\\[0.1in]
This quotient set equipped with a suitable
multiplicative law $\cdot$, forms a group\index{group} denoted
$(A/\!\!\sim,\cdot)$. Indeed, defining $[a]\cdot [b]$ as the set of
all the products of any element of $[a]$ and any element of $[b]$, we
have obviously:
$$
[a]\cdot [b]=[ab]
$$ 
Notice that the neutral element of this group is the group of units of $A$
which is identical to the equivalence class $[1]$.
\\[0.1in]
In general, we have:
$$
[a]+[b]=\bigcup_{u\in [1]}[a+ub]
$$
So, from the quotient set $A/\!\!\sim$, regarding the product of sets and
the sum set, it is not possible to form a ring. Indeed, the formula of $[a]+[b]$
above implies that we cannot establish a ring homomorphism\index{ring
homomorphism} between 
$\mathbb{Z}$ and $A/\!\!\sim$ when the product of sets and the sum set are the
operations defined on $A/\!\!\sim$. Or, the rings form a
category\index{category}
and $\mathbb{Z}$ is an initial object in this category\cite{pa}. So,
$A/\!\!\sim$ equipped with the sum set and the product of sets cannot
be a ring.
\\[0.1in] 
The group $(A/\!\!\sim,\cdot)$ is not necessarily isomorphic to
$(\mathbb{Z}/\!\!\sim,\cdot)\cong (\mathbb{N},\cdot)$ when $A$
contains at least a 
unit which is not equal to $\pm 1$. Moreover, the set of arithmetic
properties in 
$A$ which are invariant under $\sim$ may be not necessarily identical
to the set of arithmetic properties in $\mathbb{Z}$ which are invariant under
$\sim$. Accordingly, when the relation of order
defined on an entire ring is not total, the picture of network of elements which
can be put in relation by equivalence relation $\sim$, changes. It may
be possible that an algebra in entire 
ring induces arithmetic which could be different than in
$\mathbb{Z}$. Nevertheless, in this paper, it is shown that  
when a ring extension of an entire subring $A$ of a subfield of 
$\mathbb{C}=\mathbb{R}[i]$ 
is equipped with a magnitude function\footnote{This function
can be viewed as a partial norm. In some cases, this function is a
norm.} which represents the size of the 
elements of this ring extension in a subfield of $\mathbb{R}$, 
the most standard arithmetic properties can be recovered
in $A$ if $A$ is principal,
provided some conditions on the magnitude function are fulfilled.
\\[0.1in]
The plan of the paper is the following one.
In the section \ref{s1}, we recall some basic facts about the group of
units of an entire ring. In the section \ref{s2}, we give some
properties which characterize ideals of a principal entire ring. In
the section \ref{s3}, we define divisibility in an entire ring. In the
section \ref{s4}, we deal with the algebraic structure of a
$\star$-extension of an entire ring equipped with a magnitude
function where the generated set of elements of this ring extension 
is subset of an abelian group. This
magnitude function is a kind of map which preserves multiplicative law
between two algebras and which generalizes 
the concept of a norm defined on a vector space over a field. In
the section \ref{s5}, we give some basic arithmetic 
definitions relative to divisibility in an entire ring. It
leads to get the generalization of the fundamental 
theorem of arithmetic in a principal entire ring. In the section
\ref{s6}, set operations on ideals of a principal entire ring are
connected to divisibility. In the section \ref{s7}, famous arithmetic
theorems as the Bezout identity 
are extended to a principal entire ring. In the section
\ref{s8}, some arithmetic properties on the set of ideals of a
principal entire ring are derived. In the section \ref{s9}, a
maximal ideals of a principal entire ring are obtained from prime
ideals of a principal entire ring. In the section \ref{s10}, examples
of ring extensions of entire rings are developed. In the section
\ref{s11}, an algebra of entire ring generated by the generators of a
Lie algebra is illustrated. It gives a generalization of the concept
developed along this paper of a ring extension of an entire ring with
an abelian group. But, in this algebraic structure, it turns out to be
that the magnitude function is either not defined or degenerate. 

\newpage

\section{{Group of units of an entire ring}}\label{s1}

An entire ring $A$\cite{sg} is\index{entire ring} 
a commutative ring which contains $1$ such that 
$1\neq 0$ and such that there are no zero divisors in $A$. There exist
elements in A which are invertible. They form a multiplicative group denoted
$U(A)$\index{group of units} which is called the group of
units\index{group of units} of $A$  
(for instance, $U(\mathbb{Z})=\{1,-1\}$). Notice that $A$ is not
necessarily a division ring\footnote{A ring $R$ is a division ring if,
and only if, its group of units $U(R)$ contains all the non-zero
elements of $R$.}\index{division ring}. For instance $\mathbb{Z}$ is a
principal entire ring but not a division ring since all the non-zero elements of
$\mathbb{Z}$ are not invertible. Since the multiplicative law
of $A$ is associative and defining
$x^n=\underbrace{xx\ldots x}_{n\,\,\mathrm{times}}$ for all $x\in A$
and for all $n\in\mathbb{N}$ by recurrence:
$$
x^0=1\,\,\,\,\mathrm{and}\,\,\,\,x^{n+1}=xx^n
$$
it can be shown by induction and by regarding inverse of any element $u$ of
$U(A)$ that:
$$
x^{m+n}=x^mx^n\,\,\,\,\mathrm{and}\,\,\,\,(x^m)^n=x^{mn}\,\,\,\,
\forall\,\,x\in A,\,\,\forall\,\,(m,n)\in\mathbb{N}^2
$$
and:
$$
v^{k+l}=v^kv^l\,\,\,\,\mathrm{and}\,\,\,\,(v^{k})^l=v^{kl}\,\,\,\,
\forall\,\,v\in U(A),\,\,\forall\,\,(k,l)\in\mathbb{Z}^2
$$
Notice that since $U(A)$ is a multiplicative group, $v^n$ for all
$v\in U(A)$ and for all $n\in\mathbb{N}$ is invertible and its inverse is
$(v^n)^{-1}=v^{-n}$ for all $v\in U(A)$ and for all $n\in\mathbb{N}$.
\\[0.1in]
If $U(A)$ is a finite group, denoting $|X|$ the order of a
finite subset $X$ of $A$, 
then from the little theorem of Lagrange, we have:
$$
v^{|U(A)|}=1\,\,\,\,\forall\,\,v\in U(A)
$$
with the order $|v|$ of the element $v$ of $A$ which divides $|U(A)|$.

\section{{Ideals and set operations on principal ideals
of an entire ring}}\label{s2}

In this section, $A$ denotes an entire ring. 
We recall that a (left/right) ideal\index{ideal} of a ring
$A$ is an additive subgroup of $A$ which is stable by (left/right)
multiplication. Moreover, a (left/right) ideal of a ring $A$ is
principal if it is generated by a 
singleton $\{a\}$ with $a\in A$. A principal ideal generated
by a singleton $\{a\}$ with $a\in A$, is denoted $aA$. Thus, we have:
$$
aA=\{ax\,:\,x\in A\}
$$

\begin{remark}\label{r2.1}
The intersection of two principal ideals $\mathfrak{a}$ and $\mathfrak{b}$
is an ideal. Let prove that $\mathfrak{a}\cap\mathfrak{b}$ is
really an ideal of $A$. 
Since $\mathfrak{a}$ and $\mathfrak{b}$ are principal
ideals, there exist two 
elements $a,b$ of $A$ such that $\mathfrak{a}=aA$ and
$\mathfrak{b}=bA$.
Since $aA\cap bA$ contains $0=a0=b0$,
since $aA\cap bA$ is stable by addition namely
$\forall (ax_1=by_1,ax_2=by_2)\in (aA\cap bA)^2$ such that $x_i,y_j\in
A$ with $i,j=1,2$, we have  
$ax_1+ax_2=by_1+by_2$ which implies that $a(x_1+x_2)=b(y_1+y_2)\in
aA\cap bA$ and since 
$\forall c\in aA\cap bA$,
$-c\in aA\cap bA$, $aA\cap bA$
is an additive subgroup of $A$. Moreover,
$\forall x\in A$, $\forall c\in aA\cap bA$, we have
$c\in aA$ which 
implies $cx\in aA$ since $aA$ is an ideal and
$c\in bA$ which implies 
that $cx\in bA$ since $bA$ is an ideal. It follows
that $\forall x\in A$, 
$\forall c\in aA\cap bA$, $cx\in aA\cap bA$. So, $aA\cap bA$ is an 
additive subgroup of $A$ which is stable by multiplication. Thus, 
$aA\cap bA$ is an ideal.
\\[0.1in]
The sum of two principal ideals $\mathfrak{a}$ and $\mathfrak{b}$
is an ideal. Let prove that $\mathfrak{a}+\mathfrak{b}$ is
really an ideal of $A$. Since $\mathfrak{a}$ and $\mathfrak{b}$ are principal
ideals, there exist two
elements $a,b$ of $A$ such that $\mathfrak{a}=aA$ and
$\mathfrak{b}=bA$. 
$aA+bA$ contains $0=a0+b0$. Moreover, $\forall (r,s)\in A^2$,
$\forall (ax_1+by_1,ax_2+by_2)\in (aA+bA)^2$ such that $x_i,y_j\in A$
with $i,j=1,2$, we have  
$r(ax_1+by_1)+s(ax_2+by_2)=a(rx_1+sx_2)+b(ry_1+sy_2)\in aA+bA$. Thus, 
$aA+bA$ is an ideal.
\end{remark}

\noindent
A principal ring\index{principal ring} is a ring such that every ideal is
principal.

\begin{lemma}\label{l2.2}
Let $A$ be an entire principal ring. 
Then, any ideal which contains $1$ is equal to $A$.
The ideal $aA$ is equal to $A$ if, and only if, $a\in U(A)$.
\end{lemma}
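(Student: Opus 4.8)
The plan is to prove the two assertions in turn, observing at the outset that both follow directly from the defining axioms of an ideal together with the commutativity of $A$; in fact neither principality nor the absence of zero divisors is genuinely needed here, beyond the fact that $A$ is a commutative ring containing $1$.

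For the first assertion I would take an arbitrary ideal $I$ of $A$ with $1\in I$. Since an ideal is by definition stable under multiplication by arbitrary elements of the ring, for every $x\in A$ I obtain $x=1\cdot x\in I$. Hence $A\subseteq I$, and as the reverse inclusion $I\subseteq A$ holds trivially, I conclude $I=A$. This step uses nothing more than stability of $I$ under multiplication and the presence of the multiplicative identity.

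For the second assertion I would argue the two implications separately. First, suppose $aA=A$. Then $1\in A=aA$, so there is some $x\in A$ with $ax=1$; by commutativity $xa=1$ as well, so $a$ is invertible and $a\in U(A)$. Conversely, suppose $a\in U(A)$, so that there exists $a^{-1}\in A$ with $aa^{-1}=1$. Then $1=aa^{-1}\in aA$, and since $aA$ is an ideal containing $1$, the first assertion immediately yields $aA=A$.

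The only point that warrants any attention is the appeal to commutativity in the forward direction of the biconditional, which is what lets me pass from the one-sided relation $ax=1$ to genuine invertibility of $a$ in $U(A)$; but since $A$ is a commutative ring by hypothesis, this is no real obstacle, and the remainder of the argument is entirely routine from the ideal axioms.
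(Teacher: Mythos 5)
Your proposal is correct and follows essentially the same route as the paper: the first assertion via $x = 1\cdot x \in I$ and double inclusion, and the second by extracting $ax=1$ from $1\in aA$ in one direction and reducing to the first assertion in the other. Your added observation that neither principality nor the absence of zero divisors is actually used is accurate, and your wording of the converse step ($1=aa^{-1}\in aA$, hence $aA=A$ by the first assertion) is in fact cleaner than the paper's.
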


\begin{proof}
$A$ is itself an ideal which contains $1$. If an ideal contains $1$,
since it is stable by multiplication, then any element $x=1x$ of $A$
belongs to this ideal. So, $A$ is included in this ideal which is itself
included in $A$. It results that this ideal is equal to $A$.
\\[0.1in]
If the ideal $aA=A$, it means that $1\in aA$. So, there exists a non-zero
element $b\in A$ such that $ab=1$. Therefore, $a$ is
invertible. Reciprocally, if $a$ is invertible, there exists a non-zero
element $b$ of $A$ such that $ab=1$. Since $aA$ is an ideal of $A$,
$ab=1\in A$. Therefore, if $a$ is invertible, then $aA=A$. 
\end{proof}

\section{{Divisibility in an entire ring}}\label{s3}

In this section, we assume that the reader has a knowledge of basic
concepts of the number theory. For a review, the reader can be
referred to \cite{tma}. Moreover, we assume that $A$ is an entire ring
which is not necessarily principal.

\begin{definition}\label{d3.1}
An element $a$ of $A$ divides an element $b$ of $A$, what it is
denoted $a|b$, if there exists $c\in A$ such that
$ac=b$. Then the element $a$ of $A$ is said to be a
divisor\index{divisor} of $b$ of $A$ 
and the element $b$ of $A$ is said to be a multiple\index{multiple} of
$a$ of $A$.
\end{definition}

\begin{definition}\label{d3.2}
An element $x$ of $A$ is a divisor of zero\index{divisor of zero} if
$x\neq 0$ and if there 
exists an element $y\neq 0$ of $A$ such that $xy=0$.
\end{definition}

\begin{remark}\label{r3.3}
The set of divisors of an element $a$ of $A$ is denoted
$\mathcal{D}(a)$. We have $U(A)\subseteq \mathcal{D}(a)$ for all $a\in
A$. In particular, $\mathcal{D}(1)=U(A)$. More generally, we have:
$$
\mathcal{D}(a)\subseteq\bigcup_{d|a}dA
$$
If $a\not\in U(A)$, there exists at least an element $d\neq a$ of
$\mathcal{D}(a)$ which is not in $U(A)$ since $U(A)$ is a
multiplicative group. Indeed, let assume absurdly that there doesn't
exist such an element $d\neq a$ of $\mathcal{D}(a)$. Accordingly, all
the elements of $\mathcal{D}(a)$ except $a$ would be in $U(A)$. But, then
$a\in U(A)$ since $U(A)$ 
is a multiplicative group. So, we reach to a
contradiction meaning that if $a\not\in U(A)$, then 
there exists at least an element $d\neq a$ of
$\mathcal{D}(a)$ which does not belong to $U(A)$.
\\[0.1in]
The set of multiples of an element $a$ of $A$ is denoted
$\mathcal{M}(a)$ which is equal to the ideal $aA$:
$$
\mathcal{M}(a)=aA
$$
In particular, we have $\mathcal{D}(0)=\mathcal{M}(0)=0$ and 
$0\in\mathcal{M}(a)$ for all $a\in A$.
\\[0.1in]
The relation of divisibility which is defined on $A$ is reflexive,
transitive and linear\footnote{The property that the relation of
divisibility denoted $|$, which is defined on $A$, is reflexive,
transitive and linear (see also \cite{tma}), means that:\\
reflexivity: $\forall\,\,x\in A$, we have $x|x$;
\\
transitivity: $\forall\,x,y,z\in A$, $x|y$ and $y|z$ imply $x|z$;
\\
linearity: for $x,y,z\in A$ such that $x|y$ and $x|z$, we have
$x|(ay+bz)$ for all $a,b\in A$.}.
\end{remark}

\begin{property}\label{p3.4}
$$
\forall\,(x,y)\in A^2,\,x|y\,\Leftrightarrow\,yA\subseteq xA
$$
\end{property}

\begin{proof}
Indeed, the ideal $xA$ is the set of multiples of $x$. Let assume
$x|y$. Whatever $z\in yA$, we have $y|z$, so by transitivity of the
relation of divisibility defined on $A$, $x|z$ and
$z\in xA$. If, reciprocally, we have $yA\subseteq xA$, it comes that
$y\in xA$, so $x|y$.
\end{proof}
\noindent
In the following, we denote by $aU(A)$, the subset of $A$ defined by:
$$
aU(A)=\{au\,:\,u\in U(A)\}
$$
and for two subsets $X,Y$ of $A$, the subset 
$X\setminus Y$ of $A$ is the subset of all the elements of $X$ which are
not in $Y$:
$$
X\setminus Y=\{x\in X\,:\,x\not\in Y\}
$$
\begin{remark}\label{r3.5}
$$
\mathcal{D}(a)=\bigcup_{d|a}dU(A)
$$
\end{remark}

\section{{Extension of an entire ring}}\label{s4}

\begin{definition}\label{d4.1}
Let $A$ be an entire subring with $1\in A$ ($1\neq 0$), 
of a subfield $\mathbb{F}$ of 
$\mathbb{C}$, which is generated by a finite number of
its elements 
and let $G$ be a finite abelian group of $\mathbb{F}$ which is not contained
in $A$ although the intersection of $A$ and $G$ is
non-empty (since it contains at least $1$) 
and such that all the elements of $G$ commute with all
the elements of $A$. We denote $\mathcal{G}$ a maximal family of linearly
independent elements of $G$ over $\mathbb{F}$\footnote{A maximal family of linearly independent elements of a set $E$ of $\mathbb{C}$ over a ring $R$ of $\mathbb{C}$ is a free family of elements of $E$ over $R$, whose cardinality is maximal. See also the definition \ref{d4.57}.} and $S$ the subset of
$\mathcal{G}$ which consists of all the elements in $\mathcal{G}$ which does
not belong to $A$. 
We denote by $A[S]$ the commutative subring of $\mathbb{F}$ which is
the ring extension of $A$ which includes all linear combinations of
elements of $G$ with coefficients in $A$. It is understood that
$A[S]$ is generated by $S$ but a basis of $A[S]$ is a set of elements which
includes $S$ and a maximal family of linearly independent generators
of $A$ over $A$.
Moreover, we assume that there is no divisor of zero in
$A[S]$. So, since $1\neq 0$ in $A$, $A[S]$ is entire.
Besides, the definition of divisibility in $A$ can be extended in
$A[S]$ with the same notations. 
We assume that $\mathbb{F}$ is equipped with a norm
$||\,\,||_{\mathbb{F}}$. We assume that
any non-zero element of $A[S]$ is invertible in $\mathbb{F}$.
\end{definition}

\begin{remark}\label{r4.2}
$G$ as well as $\mathcal{G}$ and so also $S$ are contained in $U(A[S])$.
\end{remark}

\begin{definition}\label{d4.3}
The magnitude function\index{magnitude function} $N$ is the map defined on
$A[S]$ which associates the unique element $N(x)$ of the set 
$\mathbb{F}\cap\mathbb{R}$, to
$x\in A[S]$: 
\begin{eqnarray}
N:A[S]&\longrightarrow& \mathbb{F}\cap\mathbb{R}
\nonumber\\
x&\mapsto& N(x)
\nonumber
\end{eqnarray}
with properties:
\[
\forall\,\,x\in A[S]\setminus (\ker N\setminus\{0\})\,:\,N(x)\in A[S],\,\,
\mathcal{D}(N(x))=\mathcal{D}(x)\label{p.1}\tag{P.1}
\\[0.05in]
\]
\[
N(x)=0\,\Leftrightarrow\,x\in\ker N\label{p.2}\tag{P.2}
\\[0.05in]
\]
\[
\forall\,\,x,y\in A[S],\,\,\exists\,\,z\in A[S]\,:\,N(x)+N(y)=N(z)
\label{p.3}\tag{P.3}
\\[0.05in]
\]
\[
N(x)=N(-x)\,\,\,\,\forall\,\,x\in A[S]\label{p.4}\tag{P.4}\\[0.05in]
\]
\[
\forall\,\,x\in A[S]\,:\,N(x)\in A[S],\,\,
N(N(x))=N(x)\label{p.5}\tag{P.5}\\[0.05in]
\]
\[
N(xy)=N(x)N(y)\,\,\,\,\forall\,\,x,y\in A[S]\label{p.6}\tag{P.6}\\[0.05in]
\]
\[
||N(x)||_{\mathbb{F}}=N(x),\,\,\,\forall\,\,x\in A[S]
\label{p.7}\tag{P.7}\\[0.05in]
\]
\[
\forall\,\,x\in A[S]\setminus\ker N,
\,\exists\,\,x'\in\mathbb{F}\cap\mathbb{R}\,:\,N(x)^2=xx'\in A[S]
\label{p.8}\tag{P.8}\\[0.05in]
\]
For a given element $x$ of $A[S]$, $N(x)$ is said to be the size or 
the magnitude\index{magnitude} of $x\in A[S]$ in $\mathbb{F}\cap\mathbb{R}$.
\end{definition}

\begin{remark}\label{r4.4}
For a given element $x$ of $A[S]$, 
the notation $N(x)^{-1}$ means the inverse of $N(x)$ in
$\mathbb{F}\cap\mathbb{R}$. It is possible that $N(x)$ belongs to
$U(A[S])$. In such a case, $N(x)^{-1}$ is the inverse of $N(x)$ in
$A[S]$. For instance (see below for more details), when $x\in U(A[S])$, we have
$N(x)\in U(A[S])$ and $N(x)^{-1}=N(x^{-1})$.
\end{remark}

\begin{example}\label{e4.5}
For any integer $k$ of $\mathbb{Z}$,
$N(k)$ is equal to the unsigned part of $k$. In other
word, we have:
$$
N(k)=\mathrm{abs}(k)\,\,\,\,\forall\,\,k\in\mathbb{Z}
$$
where $\mathrm{abs}$ is the absolute value (or modulus) function on
$\mathbb{Z}$. 
\end{example}

\begin{property}\label{p4.6}
Let $n\in\mathbb{N}^{\star}$ and let $x_1,\ldots,x_n$ be $n$
element(s) of $A[S]$. Then we have:
$$
N(x_1)+\ldots+N(x_n)=0\,\Leftrightarrow\,x_1,\ldots,x_n\in\ker N
$$
\end{property}

\begin{proof} 
From the property (\ref{p.2}) of the definition (\ref{d4.3}), we have
$N(x_1)=0\,\Leftrightarrow\,x_1\in\ker N$. So, the property is verified for
$n=1$. In the following, we assume that $n\geq 2$.
\\[0.1in]
From the property (\ref{p.2}) of the definition (\ref{d4.3}), 
it is obvious also that if $x_1,\ldots,x_n\in\ker N$, then
$N(x_1)+\ldots+N(x_n)=0$.
\\[0.1in]
Reciprocally, if $N(x_1)+\ldots+N(x_n)=0$ with $n\geq 2$, we have
($n\geq 2$): 
$$
N(x_2)+\ldots+N(x_n)=-N(x_1)
$$
From the property (\ref{p.3}) of the definition (\ref{d4.3}), using an
immediate reasoning by induction, we can
find an element $z$ of $A[S]$ such that ($n\geq 2$):
$$
N(x_2)+\ldots+N(x_n)=N(z)
$$
It results that:
$$
N(z)=-N(x_1)
$$
Using the property (\ref{p.7}) of the definition
(\ref{d4.3}), we have:
$$
||N(z)||_{\mathbb{F}}=N(z)
$$
Using again (\ref{p.7}), from the equality $N(z)=-N(x_1)$, it implies that:
$$
N(z)=||-N(x_1)||_{\mathbb{F}}=||N(x_1)||_{\mathbb{F}}=N(x_1)
$$
So:
$$
N(x_1)=-N(x_1)\,\Leftrightarrow\,N(x_1)=0\,\Leftrightarrow\,x_1\in\ker N
$$
Since we can exchange $x_1$ with any $x_i$ for $i\in\llbracket
1,n\rrbracket$, we deduce that if $N(x_1)+\ldots+N(x_n)=0$ with
$n\geq 2$, then $x_1,\ldots,x_n\in\ker N$.
\\[0.1in]
We conclude that if $x_1,\ldots,x_n$ be $n$
element(s) of $A[S]$ with $n\in\mathbb{N}^{\star}$,
$N(x_1)+\ldots+N(x_n)=0$ is equivalent to $x_1,\ldots,x_n\in\ker N$
with $n\in\mathbb{N}^{\star}$.
\end{proof}

\begin{definition}\label{d4.7}
An element $r$ of a ring $R$ is said regular\index{regular} or
simplifiable for the 
multiplicative law of $R$ (or multiplicatively regular or
multiplicatively simplifiable) if for any couple $(x,y)\in R^2$, we have:
$$
rx=ry\,\Rightarrow\,x=y
$$
and
$$
xr=yr\,\Rightarrow\,x=y
$$
\end{definition}

\begin{remark}\label{r4.8}
Since $A[S]$ is a commutative ring, any element $a$ is multiplicatively
regular if for any couple $(x,y)\in A[S]^2$, we have:
$$
ax=ay\,\Rightarrow\,x=y
$$
Any invertible element of $A[S]$ is regular for the multiplicative law of
$A$ (or multiplicatively regular). Indeed, let $a$ be an invertible
element of $A[S]$, whose inverse is $b$. Then, we have:
$$
ax=ay\,\Rightarrow\,bax=bay\,\Rightarrow\,x=y
$$
\end{remark}

\begin{property}\label{p4.9}
A non-zero element of $A[S]$ is regular for the multiplicative law of $A[S]$
(or multiplicatively regular) if, and only if, it is not a divisor of
$0$.
\end{property}

\begin{proof}
Let $a$ a non-zero element of $A[S]$ and let $x,y\in A[S]$.
\\[0.1in]
If $a$ is multiplicatively regular and if $ax=0$, then $ax=a0$. After
simplification by $a$, it comes that $x=0$. So (see the definition
(\ref{d3.2})), $a$ is not a divisor of $0$.
\\[0.1in]
Reciprocally, if $a$ is not a divisor of $0$ and if $ax=ay$, then
$a(x-y)=0$. Necessarily, we have $x-y=0$ and so $x=y$ implying that $a$
is regular. 
\end{proof}

\begin{remark}\label{r4.10}
Since $A[S]$ is an entire ring, $A[S]$ does not contain divisor of
zero. So, any non-zero element of $A[S]$ is regular. In particular, any
prime element of $A$ is regular.
\\[0.1in]
Moreover, in a subfield of $\mathbb{C}$ as for instance $\mathbb{F}$,
$\mathbb{R}$ or $\mathbb{F}\cap\mathbb{R}$, its non-zero elements are
invertible and so regular. 
\end{remark}

\begin{property}\label{p4.11}
$$
N(1)=1
$$
\end{property}

\begin{proof}
Using the properties (\ref{p.4}) and (\ref{p.6}) of the definition
(\ref{d4.3}), we have: 
$$
N(-x)=N(x)\,\,\,\,\forall\,\,x\in A[S]
$$
$$
N((-1)x)=N(x)\,\,\,\,\forall\,\,x\in A[S]
$$
$$
N(-1)N(x)=N(x)\,\,\,\,\forall\,\,x\in A[S]
$$
$$
N(1)N(x)=N(x)\,\,\,\,\forall\,\,x\in A[S]
$$
$$
N(x)(N(1)-1)=0\,\,\,\,\forall\,\,x\in A[S]
$$
In particular, it is true when $x\not\in\ker N$ and so when $N(x)$ 
is regular in $\mathbb{F}\cap\mathbb{R}$ (see the remark
(\ref{r4.10})). Therefore, we get: 
$$
N(1)-1=0
$$
$$
N(1)=1
$$
\end{proof}

\begin{property}\label{p4.12}
$$
0\in\ker N
$$
\end{property}

\begin{proof}
Using the property (\ref{p.6}) of the definition (\ref{d4.3}), we have:
$$
N(0x)=N(0)N(x)\,\,\,\,\forall\,\,x\in A[S]
$$
Since $0x=0$, it comes that:
$$
N(0)=N(0)N(x)\,\,\,\,\forall\,\,x\in A[S]
$$
$$
N(0)(N(x)-1)=0\,\,\,\,\forall\,\,x\in A[S]
$$
In particular, it is true for $x\in A[S]$ such that $N(x)-1\neq
0$. In this case, using the remark
(\ref{r4.10}), $N(x)-1$ is regular in $\mathbb{F}\cap\mathbb{R}$ and
using the property (\ref{p.2}) of the definition (\ref{d4.3}), we get:
$$
N(0)=0\,\Leftrightarrow\,0\in\ker N
$$ 
\end{proof}

\begin{property}\label{p4.13}
Let $v\in U(A[S])$. Then, the inverse of $N(v)$ denoted
$N(v)^{-1}$ in $\mathbb{F}\cap\mathbb{R}$ is:
$$
N(v)^{-1}=N(v^{-1})
$$
\end{property}

\begin{proof}
Using the property (\ref{p.6}) of the definition
(\ref{d4.3}) and the property (\ref{p4.11}), we can see that: 
$$
N(vv^{-1})=N(v)N(v^{-1})=N(1)=1\,\,\,\,\forall\,\,v\in U(A[S])
$$ 
What it proves that $N(v)\in U(A[S])$ for all $v\in A[S]$ 
and the inverse of $N(v)$ denoted $N(v)^{-1}$ in
$\mathbb{F}\cap\mathbb{R}$ is:
$$
N(v)^{-1}=N(v^{-1})\,\,\,\,\forall\,\,v\in U(A[S]) 
$$
\end{proof}

\begin{remark}\label{r4.14}
Accordingly, the restriction of the application $N$ denoted
$N\left|_{U(A[S])}\right.$ is an endomorphism of the
multiplicative group $U(A[S])$.
\end{remark}

\begin{corollary}\label{c4.15}
If $v\in U(A[S])$, then $v\not\in\ker N$.
\end{corollary}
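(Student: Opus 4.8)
The plan is to argue by contraposition, reading off the conclusion directly from the multiplicativity computation already performed in Property \ref{p4.13}. Since $v\in U(A[S])$, that property (which itself uses (P.6) of Definition \ref{d4.3} together with Property \ref{p4.11}) yields
$$
N(v)N(v^{-1})=N(vv^{-1})=N(1)=1.
$$
In particular $N(v)$ is a unit of $\mathbb{F}\cap\mathbb{R}$, so it cannot vanish.

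Concretely, I would suppose for contradiction that $v\in\ker N$. By Property (P.2) of Definition \ref{d4.3} this is equivalent to $N(v)=0$. Substituting into the identity above gives
$$
0=0\cdot N(v^{-1})=N(v)N(v^{-1})=1,
$$
which contradicts the fact that $1\neq 0$ in the entire ring $A[S]$. Hence $N(v)\neq 0$, and applying (P.2) once more we conclude $v\not\in\ker N$.

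The argument is purely formal, so there is no real obstacle to overcome: the corollary is an immediate byproduct of the equality $N(v)N(v^{-1})=1$ established in Property \ref{p4.13}. The only point deserving a word of care is that this equality genuinely forces $N(v)\neq 0$, which relies on $A[S]$ being entire so that $1\neq 0$; without that hypothesis the implication from ``$N(v)$ is a unit'' to ``$N(v)\neq 0$'' would be vacuous.
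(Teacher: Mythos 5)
Your proof is correct and follows essentially the same route as the paper: both rest on the identity $N(v)N(v^{-1})=1$ from Property \ref{p4.13} and the fact that $1\neq 0$, concluding $N(v)\neq 0$ and hence $v\not\in\ker N$ via (\ref{p.2}). Your version merely phrases the final step as an explicit contradiction, which is an inessential difference.
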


\begin{proof}
Since $N(v)N(v)^{-1}=1$ for all $v\in U(A[S])$ and since $1\neq 0$, we have
$N(v)\neq 0$ for all $v\in U(A[S])$. Therefore, if $v\in U(A[S])$,
then $v\not\in\ker N$.
\end{proof}

\begin{property}\label{p4.16}
$$
N(x)^n=N(x^n)\,\,\,\forall x\in A[S],\,\,\forall n\in\mathbb{N}
$$ 
\end{property}

\begin{proof}
This property is proved by induction by using the property (\ref{p.6})
of the definition (\ref{d4.3}).
\end{proof}

\noindent
Then, we have:
$$
N(v)^{-n}=(N(v)^n)^{-1}=N(v^n)^{-1}=N((v^n)^{-1})
=N(v^{-n})\,\,\,\,\forall v\in U(A[S]),\,\,\forall n\in\mathbb{N} 
$$
It follows the property:

\begin{property}\label{p4.17}
$$
N(v)^k=N(v^k)\,\,\,\,\forall v\in U(A[S]),\,\,\forall k\in\mathbb{Z}
$$
\end{property}

\begin{property}\label{p4.18}
Let $x,y$ be two non-zero elements of $A[S]$. Then:
$$
\mathcal{D}(x)=\mathcal{D}(y)\,\Leftrightarrow\,\exists\,\,v\in U(A[S])\,:\,y=xv
$$
with $v$ which is unique.
\end{property}

\begin{proof}
Let $x,y$ be two non-zero elements of $A[S]$.
\\[0.1in] 
From the remark (\ref{r3.5}), since $U(A[S])$ is a multiplicative group, 
if there exists $v\in U(A[S])$ such that $y=xv$, then
$\mathcal{D}(x)=\mathcal{D}(y)$. Indeed, we have:
$$
\mathcal{D}(y)=\bigcup_{d|y}dU(A[S])=\bigcup_{d|xv}dU(A[S])
=\bigcup_{dv^{-1}|x}dU(A[S])=\bigcup_{d'|x}d'vU(A[S])=\bigcup_{d'|x}d'U(A[S])
=\mathcal{D}(x) 
$$ 
with $d'=dv^{-1}$.
\\[0.1in]
Reciprocally, if $\mathcal{D}(x)=\mathcal{D}(y)$, then $x|y$ and
$y|x$. So, there exist two elements $d,d'$ of $A[S]$ such that $y=dx$ and
$x=d'y$. It gives:
$$
y=dd'y
$$
$$
y(dd'-1)=0
$$
Since $y\neq 0$, from the remark (\ref{r4.10}), $y$ is regular and we
deduce that:
$$
dd'=1
$$
So, $d\in U(A[S])$ and its inverse is $d^{-1}=d'$.
\\[0.1in]
Let consider two elements $v,v'$ of $U(A[S])$ such that ($x,y\neq 0$):
$$
y=xv=xv'
$$
Then we have ($x\neq 0$):
$$
xv-xv'=0
$$
$$
x(v-v')=0
$$
Since $x\neq 0$ of $A[S]$ is regular (see the remark (\ref{r4.10})), we
have:
$$
v-v'=0
$$ 
$$
v=v'
$$
We conclude that 
$\mathcal{D}(x)=\mathcal{D}(y)$ with $x,y\neq 0$ of $A[S]$ 
if, and only if, there exists a unique
element $v$ of $U(A[S])$ such that $y=xv$ with $x,y\neq 0$.
\end{proof}

\begin{corollary}\label{c4.19}
$$
\forall\,\,x\in A[S]\setminus\ker N\,:\,
N(x)\in A[S],\,\,\exists\,\,v\in U(A[S])\,:\,x=vN(x)
$$
with $v$ which is unique.
\end{corollary}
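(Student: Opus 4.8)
The plan is to obtain the corollary as a direct specialization of Property \ref{p4.18} to the pair of elements $N(x)$ and $x$, with Property (\ref{p.1}) supplying exactly the divisor-set equality that Property \ref{p4.18} consumes. First I would fix $x\in A[S]\setminus\ker N$ and note the inclusion $A[S]\setminus\ker N\subseteq A[S]\setminus(\ker N\setminus\{0\})$, so that $x$ lies in the domain where (\ref{p.1}) applies. Invoking (\ref{p.1}) then yields simultaneously that $N(x)\in A[S]$ and that $\mathcal{D}(N(x))=\mathcal{D}(x)$, which is the key input.

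Next I would verify the hypotheses of Property \ref{p4.18}, namely that both $N(x)$ and $x$ are non-zero. Since $x\notin\ker N$, property (\ref{p.2}) gives $N(x)\neq 0$; and since $0\in\ker N$ by Property \ref{p4.12}, the assumption $x\notin\ker N$ forces $x\neq 0$. Thus $N(x)$ and $x$ are two non-zero elements of $A[S]$ satisfying $\mathcal{D}(N(x))=\mathcal{D}(x)$.

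Finally, I would apply Property \ref{p4.18} with first argument $N(x)$ and second argument $x$: the equality $\mathcal{D}(N(x))=\mathcal{D}(x)$ produces a unique $v\in U(A[S])$ with $x=N(x)\,v$, and by commutativity of $A[S]$ this rewrites as $x=v\,N(x)$. The uniqueness of $v$ is inherited verbatim from the uniqueness clause of Property \ref{p4.18}.

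I expect no genuine obstacle here, since the substantive work has already been carried out in establishing (\ref{p.1}) and Property \ref{p4.18}. The only points demanding a little care are orienting Property \ref{p4.18} so that it is $x$ (rather than $N(x)$) that gets factored, and confirming the two non-vanishing conditions; both of the latter follow at once from (\ref{p.2}) and Property \ref{p4.12}. In short, the corollary is essentially Property \ref{p4.18} read off for the pair $\bigl(N(x),x\bigr)$.
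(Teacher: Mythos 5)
Your proposal is correct and follows exactly the paper's own route: the paper's proof is precisely "using property (\ref{p.1}) of the definition (\ref{d4.3}), the corollary follows from the property (\ref{p4.18})." You have simply made explicit the details the paper leaves implicit (the domain inclusion for (\ref{p.1}), and the non-vanishing of $x$ and $N(x)$ via (\ref{p.2}) and Property \ref{p4.12}), which is a faithful elaboration of the same argument.
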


\begin{proof}
Using the property (\ref{p.1}) of the definition
(\ref{d4.3}), the corollary (\ref{c4.19}) follows from the property
(\ref{p4.18}). 
\end{proof}

\begin{property}\label{p4.20}
Let $x$ be an element of $A[S]\setminus(\ker N\setminus\{0\})$ and let
$n\in\mathbb{N}$. If $N(x)\in A[S]$ and if $N(nx)\in A[S]$, then
we have ($x\not\in\ker N\setminus\{0\}$):  
$$
N(nx)=nN(x)
$$
\end{property}

\begin{proof}
Let $x$ be an element of $A[S]\setminus (\ker N\setminus\{0\})$ and let
$n\in\mathbb{N}$.
\\[0.1in]
The property is verified for $n=0$ and for $x=0$ 
since $0x=n0=0$ and $N(0)=0$. In the following,
we assume that $n\in\mathbb{N}^{\star}$ and $x\neq 0$.
\\[0.1in]
Notice that the element $nx$ for $x\in A[S]$ and $n\in\mathbb{N}^{\star}$
is well defined in $A[S]$ since 
$nx=\underbrace{x+\ldots+x}_{n\,\,\mathrm{times}}$ and $(A[S],+)$ is an
additive subgroup of $A[S]$.
Moreover, from the property (\ref{p.1}) of the definition
(\ref{d4.3}), if $N(x)\in A[S]$, then we have ($x\not\in\ker N$):
$$
\mathcal{D}(x)=\mathcal{D}(N(x))
$$
It means that ($d\in A[S]$):
$$
d|x\,\Leftrightarrow\,d|N(x)
$$
We know that ($d\in A[S]$):
$$
d|x\,\Rightarrow\,nd|nx
$$
Or, since $n\in\mathbb{N}^{\star}$ and so 
$n1=\underbrace{1+\ldots+1}_{n\,\,\mathrm{times}}$ is regular (see the
remark (\ref{r4.10})), we have ($d\in A[S]$):
$$
nd|nx\,\Rightarrow\,d|x
$$
It follows that ($d\in A[S]$ and $n\in\mathbb{N}^{\star}$):
$$
d|x\,\Leftrightarrow\,nd|nx
$$
and we have also ($d\in A[S]$ and $n\in\mathbb{N}^{\star}$):
$$
d|N_A(x)\,\Leftrightarrow\,nd|nN(x)
$$
So ($d\in A[S]$ and $n\in\mathbb{N}^{\star}$):
$$
nd|nx\,\Leftrightarrow\,nd|nN(x)
$$
$$
d'|nx\,\Leftrightarrow\,d'|nN(x)
$$
with $d'=nd$. Consequently, renaming $d'$ as $d$, we have ($d\in A[S]$
and $n\in\mathbb{N}^{\star}$):
$$
d|nx\,\Leftrightarrow\,d|nN(x)
$$
Therefore, using the remark (\ref{r3.5}), we have ($x\not\in\ker N$ and
$N(x)\in A[S]$):
$$
\mathcal{D}(nx)=\bigcup_{d|nx}dU(A)=\bigcup_{d|nN_A(x)}dU(A)
=\mathcal{D}(nN(x))
$$
Again, from the property (\ref{p.1}) of the definition
(\ref{d4.3}), if $N(nx)\in A[S]$, then we have ($x\not\in\ker N$): 
$$
\mathcal{D}(nx)=\mathcal{D}(N(nx))
$$
It results that ($x\not\in\ker N$, $N(x)\in A[S]$
and $N(nx)\in A[S]$): 
$$
\mathcal{D}(N(nx))=\mathcal{D}(nN(x))
$$
Or, from the property (\ref{p.3}) of the definition
(\ref{d4.3}), using an immediate reasoning by induction, there exists
an element $z\in A[S]$ such that $nN(x)=N(z)$. So, using the property
(\ref{p.5}) of the definition (\ref{d4.3}), we have
$N(nN(x))=nN(x)$. Therefore since $N(nx)$ is the unique
element of $\mathcal{D}(nx)$ such that $N(N(nx))=N(nx)$ (see the
definition (\ref{d4.3})) when $N(nx)\in A[S]$, we get $N(nx)=nN(x)$. 
\end{proof}

\begin{corollary}\label{c4.21}
Let $x$ be an element of $A[S]\setminus (\ker N\setminus\{0\})$ and let
$k\in\mathbb{Z}$. Then we have: 
$$
N(kx)=\mathrm{abs}(k)N(x)
$$
\end{corollary}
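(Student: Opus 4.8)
The plan is to reduce everything to the nonnegative integer case, which is exactly Property \ref{p4.20}, and then to absorb the sign of $k$ using the evenness property (\ref{p.4}). First I would dispose of the degenerate cases: if $x=0$ or $k=0$ then $kx=0$, so both sides vanish (Property \ref{p4.12} gives $N(0)=0$) and there is nothing to prove. From now on I assume $x\neq 0$; since $x\in A[S]\setminus(\ker N\setminus\{0\})$, this forces $x\notin\ker N$, and property (\ref{p.1}) then guarantees $N(x)\in A[S]$.

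For $k\in\mathbb{N}^{\star}$ I would invoke Property \ref{p4.20} with $n=k$, for which $\mathrm{abs}(k)=k$ and the conclusion reads $N(kx)=kN(x)$. The only thing to check is that the two standing hypotheses of that property hold, namely $N(x)\in A[S]$ (already secured) and $N(kx)\in A[S]$. For the latter it suffices to know that $kx\in A[S]\setminus(\ker N\setminus\{0\})$ and then appeal once more to (\ref{p.1}). Since $A[S]$ is a subring of a subfield of $\mathbb{C}$ it has characteristic zero, so $kx\neq 0$; to see moreover that $N(kx)\neq 0$ I would write $kx=(k1)x$ with $k1=\underbrace{1+\cdots+1}_{k}\in A[S]$ and apply multiplicativity (\ref{p.6}) to get $N(kx)=N(k1)N(x)$. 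By Example \ref{e4.5} we have $N(k1)=\mathrm{abs}(k)\neq 0$, and $N(x)\neq 0$ because $x\notin\ker N$; as $\mathbb{F}\cap\mathbb{R}$ is a field this product is nonzero, so $kx\notin\ker N$. This places $kx$ in $A[S]\setminus(\ker N\setminus\{0\})$, (\ref{p.1}) yields $N(kx)\in A[S]$, and Property \ref{p4.20} gives $N(kx)=kN(x)=\mathrm{abs}(k)N(x)$.

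For $k$ a negative integer I would set $m=\mathrm{abs}(k)=-k\in\mathbb{N}^{\star}$, so that $kx=-(mx)$. The evenness property (\ref{p.4}) gives $N(kx)=N(-(mx))=N(mx)$, and the positive case just treated yields $N(mx)=mN(x)=\mathrm{abs}(k)N(x)$. Collecting the three cases $k>0$, $k=0$, $k<0$ completes the argument.

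I expect the main friction to be the verification that $N(kx)\in A[S]$, i.e. that $kx$ does not fall into the nonzero part of $\ker N$, since Property \ref{p4.20} is stated only under that membership hypothesis and it cannot simply be read off from $kx\neq 0$. The multiplicativity argument via $kx=(k1)x$ together with Example \ref{e4.5} is what resolves this cleanly; in fact the same identity $N(kx)=N(k1)N(x)=\mathrm{abs}(k)N(x)$ proves the corollary outright, so an alternative, shorter route is to bypass Property \ref{p4.20} altogether and rely directly on (\ref{p.6}) and Example \ref{e4.5}, at the cost of leaning on the computed value $N(k1)=\mathrm{abs}(k)$ rather than on the divisibility machinery of the preceding property.
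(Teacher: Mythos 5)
Your proof is correct, and its skeleton is exactly the one the paper intends: Corollary \ref{c4.21} appears there with no proof at all, as an immediate consequence of Property \ref{p4.20} for $k\geq 0$ together with the evenness property (\ref{p.4}) to absorb the sign when $k<0$, which is precisely your decomposition into the cases $k>0$, $k=0$, $k<0$. Where you go beyond the paper is in verifying the standing hypotheses of Property \ref{p4.20}, namely $N(x)\in A[S]$ and $N(kx)\in A[S]$, which the corollary's statement silently drops; your argument that $kx\notin\ker N$, via $N(kx)=N(k1)N(x)$ from (\ref{p.6}) together with $N(k1)=\mathrm{abs}(k)\neq 0$ from Example \ref{e4.5}, supplies exactly the justification the paper omits. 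The one caveat is that Example \ref{e4.5} is asserted in the paper rather than derived from the axioms (\ref{p.1})--(\ref{p.8}), so this step inherits that assumption; within the paper's own conventions that is acceptable, and any attempt to prove $N(k1)=k$ from Property \ref{p4.20} alone would circle back to the same question of whether a nonzero integer multiple of $1$ can lie in $\ker N$. Your closing observation is also apt: once $N(k1)=\mathrm{abs}(k)$ is granted, the single identity $N(kx)=N(k1)N(x)=\mathrm{abs}(k)N(x)$ proves the corollary outright for every $x\in A[S]$, including $x\in\ker N\setminus\{0\}$ where both sides vanish, so the hypothesis $x\notin\ker N\setminus\{0\}$ is needed only to legitimize the detour through Property \ref{p4.20}.
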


\noindent
If $U(A[S])$ has a finite order, since $\forall v\in U(A[S])$,
$v^{|U(A[S])|}=1$, we have $N(v)^{|U(A[S])|}=1$. It comes that:
$$
N(v)^{|U(A[S])|}-1=0\,\,\,\,\forall\,\,v\in U(A[S])
$$
$$
(N(v)-1)\left\{1+\ldots+N(v)^{|U(A[S])|-1}\right\}=0\,\,\,\,
\forall\,\,v\in U(A[S])
$$
Since
$N(x_1)+\ldots+N(x_n)=0\,\Leftrightarrow\,x_1,\ldots,x_n\in\ker N$
with $n\in\mathbb{N}^{\star}$ (see the property (\ref{p4.6}) above)
and since $1\neq 0$, the factor $1+\ldots+N(v)^{|U(A[S])|-1}$ is
regular and so:
$$
N(v)-1=0\,\,\,\,\forall\,\,v\in U(A[S]) 
$$
$$
N(v)=1\,\,\,\,\forall\,\,v\in U(A[S]) 
$$
Therefore, using also the property (\ref{p.6}) of the definition
(\ref{d4.3}), we have:

\begin{corollary}\label{c4.22}
If $U(A[S])$ has a finite order, then we have:
$$
N(v)=1\,\,\,\,\forall\,\,v\in U(A[S]) 
$$
$$
N(xv)=N(x)\,\,\,\,\forall\,\,x\in A[S],\,\,\forall\,\,v\in U(A[S]) 
$$
\end{corollary}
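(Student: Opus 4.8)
The plan is to deduce both equalities from the multiplicativity of $N$ together with the finiteness of the group of units. First I would invoke the little theorem of Lagrange, recalled in Section \ref{s1}: writing $m=|U(A[S])|$, every $v\in U(A[S])$ satisfies $v^m=1$. Transporting powers through $N$ by the property (\ref{p4.16}) and using $N(1)=1$ from the property (\ref{p4.11}), this yields
$$
N(v)^m=N(v^m)=N(1)=1,
$$
so that $N(v)$ is an $m$-th root of unity lying in the subfield $\mathbb{F}\cap\mathbb{R}$. Equivalently $N(v)^m-1=0$, which factors as
$$
(N(v)-1)\bigl(1+N(v)+\ldots+N(v)^{m-1}\bigr)=0.
$$

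The crux of the argument, and the step I expect to be the main obstacle, is to rule out the second factor, since in a general commutative ring $N(v)^m=1$ need not force $N(v)=1$. Here the real-valuedness of $N$ is what saves the day. I would rewrite the second factor as $N(1)+N(v)+N(v^2)+\ldots+N(v^{m-1})$, using (\ref{p4.16}) once more, so that it becomes a sum of $m$ magnitudes. Because $1,v,v^2,\ldots,v^{m-1}$ all lie in $U(A[S])$, the corollary (\ref{c4.15}) guarantees that none of them belongs to $\ker N$; hence by the property (\ref{p4.6}) this sum cannot vanish. As a nonzero element of the subfield $\mathbb{F}\cap\mathbb{R}$ it is regular by the remark (\ref{r4.10}), so it may be cancelled from the displayed product. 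This leaves $N(v)-1=0$, that is, $N(v)=1$ for every $v\in U(A[S])$.

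Finally, the second identity is an immediate consequence: for any $x\in A[S]$ and any $v\in U(A[S])$, the property (\ref{p.6}) of the definition (\ref{d4.3}) gives
$$
N(xv)=N(x)N(v)=N(x)\cdot 1=N(x).
$$
The only delicate point throughout is the non-vanishing of the cyclotomic-type factor, which rests essentially on the property (\ref{p.7}) being built into (\ref{p4.6}) so as to preclude cancellation among magnitudes; everything else is a routine consequence of the multiplicativity (\ref{p.6}) and of Lagrange's theorem.
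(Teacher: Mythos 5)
Your proposal is correct and follows essentially the same route as the paper's own argument: Lagrange's theorem gives $v^{|U(A[S])|}=1$, multiplicativity and the property (\ref{p4.16}) give $N(v)^{|U(A[S])|}=1$, the factorization $(N(v)-1)\left(1+\ldots+N(v)^{|U(A[S])|-1}\right)=0$ is resolved by recognizing the second factor as a sum of magnitudes of units and invoking the property (\ref{p4.6}) to show it is nonzero, hence regular, and the second identity then follows from (\ref{p.6}). Your write-up is in fact slightly more explicit than the paper's, since you justify the non-vanishing via the corollary (\ref{c4.15}) applied to each power $v^k$, where the paper only appeals to $1\neq 0$.
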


\begin{remark}\label{r4.23}
If $U(A[S])$ has a finite order, then the endomorphism
$N\left|_{U(A[S])}\right.$ is surjective such 
that $N\left|_{U(A[S])}\right(U(A[S]))=\{1\}$.
\end{remark}

\begin{property}\label{p4.24}
Let $x$ be an element of $A[S]$. Then we have:
$$
N(x)=1\,\Rightarrow\,x\in U(A[S])
$$
\end{property}

\begin{proof}
Let $x$ be an element of $A[S]$.
\\[0.1in]
If $N(x)=1$, then from the property (\ref{p.2}), $x\not\in\ker
N$. Using the property (\ref{p.8}) of the definition (\ref{d4.3}), 
there exists an element $x'\not\in\ker N$ of $A[S]$ such that
$N(x)^2=xx'$. Since $N(x)=1$, it implies that $xx'=1$. So, $x$ is
invertible in $A[S]$ and its inverse is $x'$. Therefore, if
$N(x)=1$, then $x\in U(A[S])$.
\end{proof}

\noindent
We saw that an element $x$ of $A[S]$ and its magnitude $N(x)$ differ
from each other by an unit which is 
unique, when $x\not\in\ker N$ such that $N(x)\in A[S]$ (see the
corollary (\ref{c4.19})). This result is extended in 
the following definition (\ref{d4.25}).

\begin{definition}\label{d4.25}
The unit function\index{unit function} is the map defined on
$A[S]\setminus\ker N$ which associates the 
unique element $u(x)$ of $\mathbb{F}$ to $x\not\in\ker N$ of $A[S]$:
\begin{eqnarray}
u:A[S]\setminus\ker N&\rightarrow&\mathbb{F}
\nonumber\\
x&\mapsto& u(x)
\nonumber
\end{eqnarray}
with properties:
\[
x=u(x)N(x)\,\,\,\,\forall\,\,x\in A[S]\setminus\ker N
\label{p.9}\tag{P.9}\\[0.05in]
\]
\[
\forall\,\,x\in A[S]\setminus\ker N\,:\,u(x)\in A[S]\setminus\ker N,\,\,
u(u(x))=u(x)\label{p.10}\tag{P.10}
\]
For given $x\not\in\ker N$ of $A[S]$, $u(x)$ is said to be 
the unit part\index{unit part} of $x$.
\end{definition}

\begin{remark}\label{r4.26}
For a given element $x$ of $A[S]\setminus\ker N$, 
the notation $u(x)^{-1}$ means the inverse of $u(x)$ in
$\mathbb{F}$. It is possible that $u(x)$ belongs to
$U(A[S])$. In such a case, $u(x)^{-1}$ is the inverse of $u(x)$ in
$A[S]$. The theorem (\ref{t4.38})
allows to make the expression $u(x)^{-1}$ definite.
\end{remark}

\begin{property}\label{p4.27}
Let $x$ be an element of $A[S]\setminus\ker N$. Then ($x\not\in\ker N$):
$$
u(x)+u(-x)=0
$$
\end{property}

\begin{proof}
From the property (\ref{p.4}) of the definition (\ref{d4.3}), using
the definition (\ref{d4.25}), we have:
$$
-x=-u(x)N(x)=-u(x)N(-x)
$$
Or, from the property (\ref{p.9}) of the definition (\ref{d4.25}), we have:
$$
-x=u(-x)N(-x)
$$
So:
$$
u(-x)N(-x)=-u(x)N(-x)
$$
Since $N(x)=N(-x)$ is regular for all $x\not\in\ker N$ of $A[S]$ (see the
remark (\ref{r4.10})), it results
that:
$$
u(-x)=-u(x)
$$
\end{proof}

\begin{property}\label{p4.28}
Let $x,y$ be two elements of $A[S]\setminus\ker N$. 
Then ($x,y\not\in\ker N$):
$$
u(xy)=u(x)u(y)
$$
\end{property}

\begin{proof}
Let $x,y$ be two elements of $A[S]\setminus\ker N$. From the property
(\ref{p.6}) of the definition (\ref{d4.3}), we have ($x,y\not\in\ker N$):
$$
xy=u(xy)N(xy)=u(xy)N(x)N(y)
$$
Or, since $x=u(x)N(x)$ and $y=u(y)N(y)$, it comes that
($x,y\not\in\ker N$):
$$
xy=u(x)N(x)u(y)N(y)=u(x)u(y)N(x)N(y)
$$
So:
$$
u(xy)N(x)N(y)=u(x)u(y)N(x)N(y)
$$
Since $x,y$ are non-zero and so regular (see the remark (\ref{r4.10}))
as well as $N(x),N(y)$, it results that ($x,y\not\in\ker N$):
$$
u(xy)=u(x)u(y)
$$
\end{proof}

\begin{property}\label{p4.29}
Let $x$ be an element of $A[S]$. 
\\[0.1in]
If $x\not\in\ker N$ and $u(x)\in A[S]$, then we have
($x\not\in\ker N$):
$$
N(u(x))=1
$$
If $N(x)\in A[S]\setminus\ker N$, then we have ($N(x)\not\in\ker N$):
$$
u(N(x))=1
$$
\end{property}

\begin{proof}
Let $x$ be an element of $A[S]$.
\\[0.1in] 
If $x\not\in\ker N$ and $u(x)\in A[S]$, then 
from the properties (\ref{p.5}), (\ref{p.6}) of the definition
(\ref{d4.3}) and from 
the property (\ref{p.9}) of the definition (\ref{d4.25}), we have
($x\not\in\ker N$ and $u(x)\in A[S]$): 
$$
N(x)=N(u(x)N(x))=N(u(x))N(N(x))=N(u(x))N(x)
$$
$$
N(x)(N(u(x))-1)=0
$$
Since $x\not\in\ker N$, from the property (\ref{p.2}) of the definition
(\ref{d4.3}), $N(x)\neq 0$. So, since the set
$A[S]$ does not
contain divisor of $0$, we get ($x\not\in\ker N$ and $u(x)\in A[S]$):
$$
N(u(x))-1=0
$$
$$
N(u(x))=1
$$
Moreover, if $N(x)\in A[S]\setminus\ker N$, then 
from the properties (\ref{p.9}) and (\ref{p.10}) of the definition
(\ref{d4.25}) of the unit  
part of an element $x\not\in\ker N$ of $A$, using the property
(\ref{p4.28}), we 
have ($N(x)\in A[S]\setminus\ker N$):
$$
u(x)=u(u(x)N(x))=u(u(x))u(N(x))
$$
$$
u(x)=u(x)u(N(x))
$$
$$
u(x)(u(N(x))-1)=0
$$
Since $u(x)$ for $x\not\in\ker N$ is well defined and is regular
($u(x)$ is invertible in $\mathbb{F}$), we have 
($N(x)\in A[S]\setminus\ker N$): 
$$
u(N(x))-1=0
$$
$$
u(N(x))=1
$$
Therefore, we conclude that if $u(x)\in A[S]$ then ($x\not\in\ker N$):
$$
N(u(x))=1
$$
and if $N(x)\in A[S]\setminus\ker N$ then ($N(x)\not\in\ker N$):
$$
u(N(x))=1
$$
\end{proof}

\begin{remark}\label{r4.30}
The equality $N(u(x))=1$ for all $x\in A[S]\setminus\ker N$ such
that $u(x)\in A[S]$, is true even if $U(A[S])$ has not finite order.
\end{remark}

\begin{property}\label{p4.31}
Let $x$ be an element of $A[S]\setminus\ker N$. Then we have
($x\not\in\ker N$):
$$
N(x)=x\,\Leftrightarrow\,u(x)=1
$$
\end{property}

\begin{proof}
Let $x$ be an element of $A[S]\setminus\ker N$. 
\\[0.1in]
If $u_A(x)=1$, then from the property (\ref{p.9}) of the definition
(\ref{d4.25}) of the unit part 
of an element $x\not\in\ker N$ of $A[S]$, it is obvious that $x=N(x)$.
\\[0.1in]
Reciprocally, if $N(x)=x$, from the property (\ref{p4.29}), we have
($x\not\in\ker N$): 
$$
u(N(x))=u(x)=1
$$
Therefore, we conlude that for any $x\not\in\ker N$ of $A[S]$, 
we have $N(x)=x\,\Leftrightarrow\,u(x)=1$.
\end{proof}

\begin{example}\label{e4.32}
We know that $N(1)=1$. So, we have, $u(1)=1$.
\end{example}

\begin{property}\label{p4.33}
Let $x$ be a non-zero element of $A[S]\setminus\ker N$. Then we have
($x\not\in\ker N$): 
$$
N(x)=-x\,\Leftrightarrow\,u(x)=-1
$$
\end{property}

\begin{proof}
Let $x$ be an element of $A[S]\setminus\ker N$. 
\\[0.1in]
If $u(x)=-1$, then from the property (\ref{p.9}) of the definition
(\ref{d4.25}) of the unit part of an element
$x\not\in\ker N$ of $A[S]$, it is obvious that $x=-N(x)$.
\\[0.1in]
Reciprocally, if $N(x)=-x$, from the properties (\ref{p4.27}) and
(\ref{p4.29}), we have ($x\not\in\ker N$):
$$
u(N(x))=u(-x)=-u(x)=1
$$
Therefore, we conlude that for any $x\not\in\ker N$ of $A[S]$, 
we have $N(x)=-x\,\Leftrightarrow\,u(x)=-1$.
\end{proof}

\begin{example}\label{e4.34}
We know that $N(-1)=N(1)=-(-1)$. So, we have, $u(-1)=-1$.
\end{example}

\begin{remark}\label{r4.35}
If $x\in U(A[S])$ (as for instance $x\in G\supset S$), then $x^{-1}$
exists and is unique in $A[S]$ and since $x\not\in\ker N$ in this
case, from the property (\ref{p.8}) of the definition (\ref{d4.3}),
there exists an element $x'$ 
of $\mathbb{F}\cap\mathbb{R}$ such
that $N(x)^2=xx'\in A[S]$. So, if $x\in U(A[S])$, since $A[S]$ is a
ring, then $x'=x^{-1}N(x)^2$ exists and belongs to
$A[S]$. It follows that ($x\in U(A[S])$ and so $x'\in A[S]$):
$$
N(x')=N(x)\neq 0
$$
Thus, if $x\in U(A[S])$, then $x'\not\in\ker N$. 
\\[0.1in]
Notice that $x'$ is unique. Indeed, let $x''\in A[S]\setminus\ker N$ such
that ($x,x',x''\not\in\ker N$):
$$
xx'=xx''
$$
Then since $x\not\in\ker N$ and so since $x$ is regular, we get
($x',x''\not\in\ker N$):
$$
x'=x''
$$
Moreover, since $x,x'\in A[S]\setminus\ker N$, we have
($x\in U(A[S])$ and so $x,x'\not\in\ker N$):
$$
N(x)^2=u(x)N(x)u(x')N(x')=N(x)u(x)u(x')N(x')
$$
$$
N(x)(N(x)-u(x)u(x')N(x'))=0
$$
Since $N(x)$ is regular for $x\not\in\ker N$ of $A[S]$ (see the
remark (\ref{r4.10})), it comes that ($x\in U(A[S])$ and $x,x'\not\in\ker N$):
$$
N(x)-u(x)u(x')N(x')=0
$$
$$
N(x)=u(x)u(x')N(x')
$$
or equivalently ($x\in U(A[S])$ and so $x,x'\not\in\ker N$):
$$
u(x)^{-1}N(x)=u(x')N(x')=x'
$$
We shall identify $x'$ when $x\in U(A[S])$, with what we call
$x^{\star}$ (see below the definition (\ref{d4.36})).
\end{remark}

\begin{definition}\label{d4.36}
The operation $\star$\index{star operation} is the involution 
defined on $A[S]$, 
which maps element $x$ of $A[S]$ 
to element $x^{\star}$ of $A[S]$:
\begin{eqnarray}
\star:A[S]&\longrightarrow& A[S]
\nonumber\\
x&\mapsto& x^{\star}=\left\{\begin{array}{ccc}
x & \mathrm{if} & x\not\in U(A[S])
\\
x' & \mathrm{if} & x\in U(A[S])
\end{array}
\right.
\nonumber
\end{eqnarray}
with $x'$ defined in the remark (\ref{r4.35}) and properties:
\[
(x^{\star})^{\star}=x\,\,\,\,\forall\,\,s\in A[S]
\label{p.11}\tag{P.11}\\[0.05in]
\]
\[
(x+y)^{\star}=x^{\star}+y^{\star}\,\,\,\,\forall\,\,x,y\in A[S]
\label{p.12}\tag{P.12}\\[0.05in]
\]
\[
(xy)^{\star}=y^{\star}x^{\star}\,\,\,\,\forall\,\,x,y\in A[S]
\label{p.13}\tag{P.13}\\[0.05in]
\]
The element $x^{\star}$ of $A[S]$ is said to be the
conjugate\index{conjugate} of the element $x$ of $A[S]$.
\end{definition}

\begin{remark}\label{r4.37}
$$
0^{\star}=0
$$
$$
1^{\star}=1
$$
$$
(-1)^{\star}=-1
$$
For instance, let prove that $0^{\star}=0$. Indeed, using the property
(\ref{p.12}) of the definition (\ref{d4.36}), we have ($x\in A[S]$):
$$
(x+0)^{\star}=x^{\star}+0^{\star}
$$
Or, $x+0=x$. It comes that ($x\in A[S]$):
$$
x^{\star}=x^{\star}+0^{\star}
$$
$$
x^{\star}+0=x^{\star}+0^{\star}
$$
$$
0=0^{\star}
$$
Moreover, let prove that $1^{\star}=1$. Indeed, using the property
(\ref{p.13}) of the definition (\ref{d4.36}), we have ($x\in A[S]$):
$$
(x1)^{\star}=1^{\star}x^{\star}
$$
Since $x1=1x=x$, we obtain ($x\in A[S]$):
$$
x^{\star}=1^{\star}x^{\star}
$$
$$
1x^{\star}=1^{\star}x^{\star}
$$
In particular, it is true when $x$ is regular meaning that:
$$
1=1^{\star}
$$
\end{remark}

\begin{theorem}\label{t4.38}
Let $x$ be an element of $A[S]$. Then we have ($x\in A[S]$):
$$
N(x^{\star})=N(x)
$$
Moreover, if $x\in U(A[S])$, then we have:
$$
u(x^{\star})=u(x)^{-1}
$$
\end{theorem}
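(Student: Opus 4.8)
The plan is to treat the two assertions separately and, within the magnitude identity, to distinguish the two branches of the definition of $\star$ (definition~\ref{d4.36}).

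For the magnitude identity $N(x^{\star})=N(x)$, I would first dispose of the case $x\notin U(A[S])$, where by definition $x^{\star}=x$ and the equality is immediate. The substance lies in the case $x\in U(A[S])$, for which $x^{\star}=x'$ with $x'$ the element of the remark~\ref{r4.35} characterised by $N(x)^2=xx'$. Applying $N$ to this relation and using the property (\ref{p.6}) of the definition~\ref{d4.3} gives $N(N(x)^2)=N(x)N(x')$; since $N(x)\in U(A[S])\subseteq A[S]$ by the property~\ref{p4.13}, the property (\ref{p.5}) yields $N(N(x))=N(x)$ and the property~\ref{p4.16} yields $N(N(x)^2)=N(N(x))^2=N(x)^2$. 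Hence $N(x)^2=N(x)N(x')$, and cancelling the regular, non-zero factor $N(x)$ (corollary~\ref{c4.15} together with remark~\ref{r4.10}) leaves $N(x')=N(x)$, that is $N(x^{\star})=N(x)$. This is precisely the computation already recorded in the remark~\ref{r4.35}, so in practice I would simply cite it.

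For the unit-part identity, restricted to $x\in U(A[S])$, I would start from the rewriting obtained at the end of the remark~\ref{r4.35}, namely $u(x)^{-1}N(x)=u(x')N(x')$. Here $u(x')=u(x^{\star})$ is legitimate because $x'\notin\ker N$ by the remark~\ref{r4.35}, so that the unit function is defined on it. Substituting the already-proved equality $N(x')=N(x)$ turns this into $u(x)^{-1}N(x)=u(x^{\star})N(x)$, and cancelling the regular factor $N(x)\neq 0$ gives $u(x^{\star})=u(x)^{-1}$.

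The point requiring care is not any single computation---each step is an application of the properties (\ref{p.5}), (\ref{p.6}), the property~\ref{p4.16}, or regularity---but rather the bookkeeping around the piecewise definition of $\star$ and the domain of $u$: one must confirm that $x\in U(A[S])$ forces both $x$ and $x^{\star}=x'$ to lie outside $\ker N$ (via the corollary~\ref{c4.15} and the remark~\ref{r4.35}) so that $u(x)$, $u(x^{\star})$, and all cancellations of $N(x)$ are justified. Everything else is inherited from the identities already assembled in the remark~\ref{r4.35}.
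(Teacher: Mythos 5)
Your proposal is correct and follows essentially the same route as the paper's own proof: split on the two branches of the definition (\ref{d4.36}), dispose of the non-unit case where $x^{\star}=x$, and in the unit case read off $N(x')=N(x)$ and $u(x)^{-1}N(x)=u(x')N(x')$ from the remark (\ref{r4.35}), finishing by cancelling the regular factor $N(x)$. The only difference is that you additionally spell out the computation behind $N(x')=N(x)$ (via the properties (\ref{p.5}), (\ref{p.6}) and the property (\ref{p4.16})), which the paper leaves implicit inside the remark (\ref{r4.35}).
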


\begin{proof}
If $x\not\in U(A[S])$, from the definition (\ref{d4.36}), we have
$x=x^{\star}$. It implies that ($x\not\in U(A[S])$):
$$
N(x)=N(x^{\star})
$$
If $x\in U(A[S])$, then from the definition (\ref{d4.36}) we have
$x^{\star}=x'$ and from the remark (\ref{r4.35}), we know that ($x\in U(A[S])$):
$$
N(x)=N(x')=N(x^{\star})
$$ 
Moreover, if $x\in U(A[S])$, then 
from the remark (\ref{r4.35}), we know that:
$$
u(x)^{-1}N(x)=u(x')N(x')=x'
$$
Using $N(x)=N(x')$, it comes that:
$$
u(x)^{-1}N(x)=u(x')N(x)
$$
Since $x'\not\in\ker N$, $N(x)$ is regular. It results that ($x\in
U(A[S])$):
$$
u(x')=u(x)^{-1}
$$
Therefore, if $x\in U(A[S])$, then we have:
$$
u(x^{\star})=u(x)^{-1}
$$
\end{proof}

\begin{property}\label{p4.39}
Let $x$ be an element of $A[S]\setminus\ker N$. Then we have
($x\not\in\ker N$):
$$
x^{\star}=x\,\Leftrightarrow\,u(x)\in\{1,-1\}
$$
\end{property}

\begin{proof}
Let $x$ be an element of $A[S]\setminus\ker N_A$. 
\\[0.1in] 
Using the definition (\ref{d4.25}) of the unit part of a non-zero element $x$ of
$A[S]$, we have ($x\not\in\ker N_A$):
$$
x^{\star}=x\,\Leftrightarrow\,
u(x^{\star})N(x^{\star})=u(x)N(x)
\,\Leftrightarrow\,
u(x^{\star})N(x)=u(x)N(x)
$$
Since for $x\not\in\ker N$, $N(x)$ is regular (see the remark 
(\ref{r4.10})), we have ($x\not\in\ker N$):
$$
x^{\star}=x\,\Leftrightarrow\,u(x^{\star})=u(x)\,\Leftrightarrow\,
u(x)^{-1}=u(x)\,\Leftrightarrow\,u(x)^2=1
$$
$$
x^{\star}=x\,\Leftrightarrow\,u(x)^2-1=0\,\Leftrightarrow\,
(u(x)-1)(u(x)+1)=0
$$
Since there doesn't exist divisor of zero in $A[S]$, we get
($x\not\in\ker N$):
$$
x^{\star}=x\,\Leftrightarrow\,u(x)-1=0\,\,\,\,\mathrm{or}\,\,\,\,u(x)+1=0
$$
$$
x^{\star}=x\,\Leftrightarrow\,u(x)=1\,\,\,\,\mathrm{or}\,\,\,\,u(x)=-1
$$
\end{proof}

\begin{property}\label{p4.40}
Let $x$ be a non-zero element of $A[S]\setminus\ker N$. If there
exists an units $i$ of 
$U(A[S])$ such that $i^2+1=0$, then
we have ($x\not\in\ker N$): 
$$
x^{\star}=-x\,\Leftrightarrow\,u(x)\in\{i,-i\}
$$
\end{property}

\begin{proof}
Let $x$ be an element of $A[S]\setminus\ker N$. 
\\[0.1in]
Using the property (\ref{p.9}) of the definition (\ref{d4.25}) of the
unit part of an element $x$ of $A[S]\setminus\ker N$, we have
($x\not\in\ker N$):
$$
x^{\star}=-x\,\Leftrightarrow\,
u(x^{\star})N(x^{\star})=-u(x)N(x)
\,\Leftrightarrow\,
u(x^{\star})N(x)=-u(x)N(x)
$$
Since for $x\not\in\ker N$, $N(x)$ is regular (see the remark
(\ref{r4.10})), we have ($x\not\in\ker N$):
$$
x^{\star}=-x\,\Leftrightarrow\,u(x^{\star})=-u(x)\,\Leftrightarrow\,
u(x)^{-1}=-u(x)\,\Leftrightarrow\,u(x)^2=-1
$$
$$
x^{\star}=-x\,\Leftrightarrow\,u(x)^2+1=0
$$
Assuming that there exists an unit $i$ in $U(A[S])$ such that
$i^2+1=0$, we have ($x\not\in\ker N$): 
$$
(u(x)-i)(u(x)+i)=0
$$
Since there doesn't exist divisor of zero in $A[S]$, we get
($x\not\in\ker N$):
$$
u(x)-i=0\,\,\,\,\mathrm{or}\,\,\,\,u(x)+i=0
$$
$$
u(x)=i\,\,\,\,\mathrm{or}\,\,\,\,u(x)=-i
$$
It results that ($x\not\in\ker N$):
$$
x^{\star}=-x\,\Leftrightarrow\,u(x)\in\{i,-i\}
$$
\end{proof}

\begin{remark}\label{r4.41}
Assuming that there exists an unit $i$ in $U(A[S])$ such that
$i^2+1=0$, we have:
$$
N(i^2)=N(-1)=N(1)=1
$$
Since $N(i^2)=N(i)^2$, it follows that:
$$
N(i)^2=1\,\Leftrightarrow\,N(i)^2-1=0\,\Leftrightarrow\,
(N(i)-1)(N(i)+1)=0
$$
Since there doesn't exist divisor of zero in $A[S]$, we get:
$$
N(i)^2=1\,\Leftrightarrow\,N(i)-1=0\,\,\,\,\mathrm{or}\,\,\,\,
N(i)+1=0
$$
$$
N(i)^2=1\,\Leftrightarrow\,N(i)=1\,\,\,\,\mathrm{or}\,\,\,\,
N(i)=-1
$$
But, if $N(i)=-1$, then $N(N(i))=N(i)=N(-1)=1$. We reach to
a contradiction meaning that:
$$
N(i)=1
$$
It results that:
$$
u(i)=i
$$
From the property (\ref{p4.40}), we deduce that:
$$
i^{\star}=-i
$$
\end{remark}

\begin{property}\label{p4.42}
Let $x$ be an element of $A[S]\setminus\ker N$. Then in
$\mathbb{F}$, we have ($x\not\in A[S]\setminus\ker N$):
$$
xu(x^{\star})=x^{\star}u(x)
$$
\end{property}

\begin{proof}
From the property (\ref{p.9}) of the definition (\ref{d4.25}), using
the theorem (\ref{t4.38}), we have
in $\mathbb{F}$ ($x\not\in\ker N$): 
$$
u(x)x^{\star}=u(x)N(x^{\star})u(x^{\star})=u(x)N(x)u(x^{\star})=xu(x^{\star})
$$
\end{proof}

\begin{example}\label{e4.43}
Let $A=\mathbb{Z}$ be the ring of integers, let $S=\{i\}$ such
that $i^2+1=0$ and let $\mathbb{F}=\mathbb{C}$ equipped with the
usual modulus norm $||\,\,||_{\mathbb{C}}$. So, $A[S]$ is the subring
$\mathbb{Z}[i]$ 
of gaussian integers in $\mathbb{C}$. Let calculate the magnitude and
the unit part of the element $1+i$. Using the property (\ref{p4.42}), we have:
$$
(1+i)u(1-i)=(1-i)u(1+i)
$$
Since:
$$
1-i=-i(1+i)
$$
and since $-i=i^{\star}=i^{-1}\in U(A[S])$, we can notice that:
$$
\mathcal{D}(1-i)=\mathcal{D}(-i(1+i))=\mathcal{D}(1+i)
$$
From the property (\ref{p4.18}), there exists a unique element $u$ of
$U(A[S])$ such that:
$$
1-i=u(1+i)
$$
Since $1-i=-i(1+i)$, we have $u=-i$. From the equality
$(1+i)u(1-i)=(1-i)u(1+i)$, since the elements $1\pm i\neq 0$ are
regulars (see the remark (\ref{r4.10}), we have:
$$
u(1-i)=-iu(1+i)
$$
$$
u(1-i)u^{-1}(1+i)=-i
$$
$$
u(1-i)^2=-i=\left(\frac{1-i}{\sqrt{2}}\right)^2
$$
$$
u(1-i)^2-\left(\frac{1-i}{\sqrt{2}}\right)^2=0
$$
$$
\left(u(1-i)-\frac{1-i}{\sqrt{2}}\right)
\left(u(1-i)+\frac{1-i}{\sqrt{2}}\right)=0
$$
Since $A[S]$ is an entire ring, we get:
$$
u(1-i)-\frac{1-i}{\sqrt{2}}=0\,\,\,\,\mathrm{or}\,\,\,\,
u(1-i)+\frac{1-i}{\sqrt{2}}=0
$$
$$
u(1-i)=\frac{1-i}{\sqrt{2}}\,\,\,\,\mathrm{or}\,\,\,\,
u(1-i)=-\frac{1-i}{\sqrt{2}}
$$
If absurdly $u(1-i)=-\frac{1-i}{\sqrt{2}}$, then we would have: 
$$
1-i=-\frac{1-i}{\sqrt{2}}N(1-i)
$$
$$
(1-i)\left\{1+\frac{N(1-i)}{\sqrt{2}}\right\}=0
$$
Since $1-i\neq 0$ is regular, it would result that:
$$
1+\frac{N(1-i)}{\sqrt{2}}=0
$$
Since $\sqrt{2}\in\mathbb{R}$ is invertible in $\mathbb{R}$, it would
imply that: 
$$
N(1-i)=-\sqrt{2}
$$
and so:
$$
||N(1-i)||_{\mathbb{C}}=\left|\left|-\sqrt{2}\right|\right|_{\mathbb{C}}=\sqrt{2}
$$
But it would contradict the fact that (see the property (\ref{p.7}) of
the definition (\ref{d4.3})):
$$
||N(1-i)||_{\mathbb{C}}=N(1-i)
$$
It means that:
$$
u(1-i)=\frac{1-i}{\sqrt{2}}
$$
which implies that:
$$
N(1-i)=\sqrt{2}
$$
Therefore:
$$
u(1+i)=\frac{1+i}{\sqrt{2}}
$$
and:
$$
N(1+i)=\sqrt{2}
$$
\end{example}

\begin{remark}\label{r4.44}
Using the properties (\ref{p.11}) and (\ref{p.12}), we can notice that:
$$
(x+x^{\star})^{\star}=x^{\star}+x
$$
From the property (\ref{p4.39}), it means that:
$$
u(x+x^{\star})\in\{-1;1\}
$$
Using the property (\ref{p.9}) of the definition (\ref{d4.25}) and
using the definition (\ref{d4.3}), it gives:
$$
x+x^{\star}=\pm N(x+x^{\star})\in\mathbb{F}\cap\mathbb{R}
$$
\end{remark}

\noindent
Thus, we can set the following definition.

\begin{definition}\label{d4.45}
The real function\index{real function $\mathrm{Re}$} $\mathrm{Re}$ is
the map defined on $A[S]$ 
which associates the element $\mathrm{Re}(x)=\frac{x+x^{\star}}{2}$ of
the set $\mathbb{F}\cap\mathbb{R}$, to $x\in A[S]$:
\begin{eqnarray}
\mathrm{Re}:A[S]&\longrightarrow& \mathbb{F}\cap\mathbb{R}
\nonumber\\
x&\mapsto& \mathrm{Re}(x)=\frac{x+x^{\star}}{2}
\nonumber
\end{eqnarray}
The number $\mathrm{Re}(x)$ is called the real part of the element $x$
of $A[S]$.
\end{definition}

\begin{property}\label{p4.46}
Let $x$ be an element of $A[S]$. Then, we have:
$$
\mathrm{Re}(x^{\star})=\mathrm{Re}(x)
$$
\end{property}

\begin{proof}
The property (\ref{p4.46}) follows from the definition (\ref{d4.45})
of the real function $\mathrm{Re}$. 
\end{proof}

\begin{property}\label{p4.47}
Let $x,y$ be two elements of $A[S]$. Then we have:
$$
\mathrm{Re}(x+y)=\mathrm{Re}(x)+\mathrm{Re}(y)
$$
\end{property}

\begin{proof}
The property (\ref{p4.47}) follows from the definition (\ref{d4.45})
of the real function $\mathrm{Re}$. 
\end{proof}

\begin{property}\label{p4.48}
Let $a$ be an element of $A[S]\setminus U(A[S])$. Then for all
$x\in A[S]$, we have ($a\not\in U(A[S])$):
$$
\mathrm{Re}(ax)=a\mathrm{Re}(x)
$$
\end{property}

\begin{proof}
Let $a$ be an element of $A[S]\setminus U(A[S])$. From the definition
(\ref{d4.36}), we have $a^{\star}=a$. Using the
definition (\ref{d4.45}) of the real function, we have ($x\in A[S]$
and $a\not\in U(A[S])$):
$$
\mathrm{Re}(ax)=\frac{ax+x^{\star}a^{\star}}{2}=\frac{ax+x^{\star}a}{2}
$$
$$
\mathrm{Re}(ax)=\frac{ax+ax^{\star}}{2}=a\frac{x+x^{\star}}{2}
=a\mathrm{Re}(x)
$$
\end{proof}

\begin{remark}\label{r4.49}
Using the properties (\ref{p.11}) and (\ref{p.12}) with the remark
(\ref{r4.41}), we can notice that:
$$
(-i(x-x^{\star}))^{\star}=(x^{\star}-x)(-i)^{\star}=-i(x-x^{\star})
$$
From the property (\ref{p4.39}), it means that:
$$
u(x+x^{\star})\in\{-1;1\}
$$
Using the property (\ref{p.9}) of the definition (\ref{d4.25}) and
using the definition (\ref{d4.3}), it gives:
$$
-i(x-x^{\star})=\pm N(x+x^{\star})\in\mathbb{F}\cap\mathbb{R}
$$
\end{remark}

\noindent
Thus, we can set the following definition.

\begin{definition}\label{d4.50}
The imaginary function\index{imaginary function $\mathrm{Im}$}
$\mathrm{Im}$ is the map defined on $A[S]$ 
which associates the element $\mathrm{Im}(x)=-i\frac{x-x^{\star}}{2}$ of
the set $\mathbb{F}\cap\mathbb{R}$, to $x\in A[S]$:
\begin{eqnarray}
\mathrm{Im}:A[S]&\longrightarrow& \mathbb{F}\cap\mathbb{R}
\nonumber\\
x&\mapsto& \mathrm{Im}(x)=-i\frac{x-x^{\star}}{2}
\nonumber
\end{eqnarray}
The number $\mathrm{Im}(x)$ is called the imaginary part of the element $x$
of $A[S]$.
\end{definition}

\begin{property}\label{p4.51}
Let $x$ be an element of $A[S]$. Then, we have:
$$
\mathrm{Im}(x^{\star})=-\mathrm{Im}(x)
$$
\end{property}

\begin{proof}
The property (\ref{p4.51}) follows from the definition (\ref{d4.50})
of the real function $\mathrm{Im}$. 
\end{proof}

\begin{property}\label{p4.52}
Let $x,y$ be two elements of $A[S]$. Then we have:
$$
\mathrm{Im}(x+y)=\mathrm{Im}(x)+\mathrm{Im}(y)
$$
\end{property}

\begin{proof}
The property (\ref{p4.52}) follows from the definition (\ref{d4.50})
of the imaginary function $\mathrm{Im}$. 
\end{proof}

\begin{property}\label{p4.53}
Let $a$ be an element of $A[S]\setminus U(A[S])$. Then for all
$x\in A[S]$, we have ($a\not\in U(A[S])$):
$$
\mathrm{Im}(ax)=a\mathrm{Im}(x)
$$
\end{property}

\begin{proof}
Let $a$ be an element of $A[S]\setminus U(A[S])$. So, we have 
$a^{\star}=a$. Using the
definition (\ref{d4.50}) of the imaginary function, we have ($x\in
A[S]$ and $a\not\in U(A[S])$):
$$
\mathrm{Im}(ax)=-i\frac{ax-x^{\star}a^{\star}}{2}=-i\frac{ax-x^{\star}a}{2}
$$
$$
\mathrm{Im}(ax)=-i\frac{ax-ax^{\star}}{2}=a\left(-i\frac{x-x^{\star}}{2}\right)
=a\mathrm{Im}(x)
$$
\end{proof}

\begin{theorem}\label{t4.54}
For all $x\in A[S]$, we have:
$$
x=\mathrm{Re}(x)+i\mathrm{Im}(x)
$$
\end{theorem}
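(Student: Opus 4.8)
The plan is to establish the identity by directly substituting the definitions of the real and imaginary functions and then simplifying with the defining relation of the unit $i$. Since both $\mathrm{Re}(x)$ and $\mathrm{Im}(x)$ lie in $\mathbb{F}\cap\mathbb{R}$ and the combination $\mathrm{Re}(x)+i\,\mathrm{Im}(x)$ is to be evaluated in $\mathbb{F}$, no subtlety about the ring structure of $A[S]$ arises; the computation is purely formal.

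First I would expand, using Definition \ref{d4.45} and Definition \ref{d4.50},
$$
\mathrm{Re}(x)+i\,\mathrm{Im}(x)=\frac{x+x^{\star}}{2}+i\left(-i\,\frac{x-x^{\star}}{2}\right).
$$
The key step is the evaluation of the scalar coefficient $i(-i)=-i^{2}$. Because the unit $i$ satisfies $i^{2}+1=0$ (the standing hypothesis under which $\mathrm{Im}$ is defined, cf. Property \ref{p4.40} and Remark \ref{r4.41}), we have $i^{2}=-1$ and hence $-i^{2}=1$. Inserting this value and combining the two half-terms gives
$$
\mathrm{Re}(x)+i\,\mathrm{Im}(x)=\frac{x+x^{\star}}{2}+\frac{x-x^{\star}}{2}=\frac{(x+x^{\star})+(x-x^{\star})}{2}=\frac{2x}{2}=x.
$$

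There is no genuine obstacle here: the entire argument is a one-line computation that invokes only the two definitions and the relation $i^{2}=-1$, and it does not require any of the involution identities \eqref{p.11}--\eqref{p.13} beyond those already used to define $\mathrm{Re}$ and $\mathrm{Im}$. The only point worth flagging is that the availability of a unit $i$ with $i^{2}=-1$ is precisely the hypothesis that makes $\mathrm{Im}(x)=-i\frac{x-x^{\star}}{2}$ meaningful in the first place, so the cancellation $i(-i)=1$ exploited above is guaranteed by the very setting in which the theorem is posed.
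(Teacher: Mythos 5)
Your proof is correct and follows essentially the same route as the paper: expand $\mathrm{Re}(x)+i\,\mathrm{Im}(x)$ via Definitions (\ref{d4.45}) and (\ref{d4.50}), use $i^2+1=0$ to evaluate $i(-i)=1$, and cancel the conjugate terms to obtain $x$. Your added remark that the existence of the unit $i$ with $i^2=-1$ is the standing hypothesis making $\mathrm{Im}$ meaningful is a fair observation, but it does not change the argument, which is identical to the paper's.
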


\begin{proof}
Using the definitions (\ref{d4.45}) and (\ref{d4.50}), for all $x\in
A[S]$, we have: 
$$
\mathrm{Re}(x)+i\mathrm{Im}(x)=\frac{x+x^{\star}}{2}+i(-i)\frac{x-x^{\star}}{2}
$$
Since $i^2+1=0$, it gives:
$$
\mathrm{Re}(x)+i\mathrm{Im}(x)=\frac{x+x^{\star}}{2}+\frac{x-x^{\star}}{2}
=\frac{x+x^{\star}+x-x^{\star}}{2}=\frac{2x}{2}=x
$$
So, we deduce that:
$$
\mathrm{Re}(x)+i\mathrm{Im}(x)=x
$$
\end{proof}

\begin{corollary}\label{c4.55}
Let $x$ be an element of $A[S]$. Then, we have:
$$
x^{\star}=\mathrm{Re}(x)-i\mathrm{Im}(x)
$$
\end{corollary}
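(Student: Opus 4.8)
The plan is to reduce the statement to the theorem (\ref{t4.54}) by invoking the already-established conjugation symmetries of the real and imaginary functions. The cleanest route is to apply the theorem (\ref{t4.54}) not to $x$ but to $x^{\star}$: since $x^{\star}$ is itself an element of $A[S]$, the theorem (\ref{t4.54}) gives $x^{\star} = \mathrm{Re}(x^{\star}) + i\,\mathrm{Im}(x^{\star})$.

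Next I would substitute the two transformation rules. The property (\ref{p4.46}) records $\mathrm{Re}(x^{\star}) = \mathrm{Re}(x)$ and the property (\ref{p4.51}) records $\mathrm{Im}(x^{\star}) = -\mathrm{Im}(x)$. Inserting these into the previous expression yields $x^{\star} = \mathrm{Re}(x) + i(-\mathrm{Im}(x)) = \mathrm{Re}(x) - i\,\mathrm{Im}(x)$, which is exactly the claim.

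An equally short alternative computes the right-hand side directly from the definitions (\ref{d4.45}) and (\ref{d4.50}): using $\mathrm{Re}(x) = (x+x^{\star})/2$ and $\mathrm{Im}(x) = -i(x-x^{\star})/2$ together with $i^2 = -1$, one finds $-i\,\mathrm{Im}(x) = i^2(x-x^{\star})/2 = -(x-x^{\star})/2$, so that $\mathrm{Re}(x) - i\,\mathrm{Im}(x)$ collapses to $(x+x^{\star})/2 - (x-x^{\star})/2 = x^{\star}$.

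Since both paths are immediate substitutions, there is no genuine obstacle here; the statement is a direct corollary of the theorem (\ref{t4.54}). The only point deserving a moment's care is the sign bookkeeping in the $-i\,\mathrm{Im}(x)$ term — confusing the two minus signs, one coming from the definition of $\mathrm{Im}$ and one from the statement, is the sole place an error could creep in, but the appeal to the properties (\ref{p4.46}) and (\ref{p4.51}) sidesteps even that.
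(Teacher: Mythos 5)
Your first route is exactly the paper's own proof: apply the theorem (\ref{t4.54}) to $x^{\star}$ and then substitute the properties (\ref{p4.46}) and (\ref{p4.51}); indeed, you make explicit the two properties that the paper only vaguely cites as ``the properties.'' The proposal is correct and essentially identical to the paper's argument (your direct-computation alternative is also valid, but it is just an unwinding of the same definitions).
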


\begin{proof}
From the theorem (\ref{t4.54}), we have:
$$
x^{\star}=\mathrm{Re}(x^{\star})+i\mathrm{Im}(x^{\star})
$$
Using the properties, it results that:
$$
x^{\star}=\mathrm{Re}(x)-i\mathrm{Im}(x)
$$
\end{proof}

\begin{remark}\label{r4.56}
Since $0^{\star}=0$, using also the theorem (\ref{t4.54}) and the
corollary (\ref{c4.55}), we have: 
$$
x=0\,\Rightarrow\,x^{\star}=0^{\star}=0
$$
and:
$$
\mathrm{Re}(x)+i\mathrm{Im}(x)=0
$$
$$
\mathrm{Re}(x)-i\mathrm{Im}(x)=0
$$
Taking the sum and the difference side by side of these two equations,
since $2\neq 0$ is regular, we deduce that:
$$
x=0\,\Leftrightarrow\,\left\{\begin{array}{c}
\mathrm{Re}(x)=0
\\[0.1in]
\mathrm{and}
\\[0.1in]
\mathrm{Im}(x)=0
\end{array}
\right.
$$
\end{remark}

\begin{definition}\label{d4.57}
Let $k\in\mathbb{N}^{\star}$ and 
$\{f_1,\ldots,f_k\}$ be a family of a subfield $\mathbb{F}$ of
$\mathbb{C}$. We said that the family
$\{f_1,\ldots,f_k\}$ is free over a subring $A$ of
$\mathbb{C}$, if for $a_1,\ldots,a_k\in A$:
$$
{\displaystyle\sum^k_{i=1}}a_if_i=0\,\Rightarrow\,a_1=\ldots=a_k=0
$$
In other words, when a family $\{f_1,\ldots,f_k\}$ of a subfield $\mathbb{F}$ of
$\mathbb{C}$ is free over a subring $A$ of
$\mathbb{C}$, it means that the elements $f_1,\ldots,f_k$
are linearly independent over $A$.
\end{definition}

\begin{theorem}\label{t4.58}
Let $e=\{e_0,e_1,\ldots,e_{n-1}\}$ with $n\in\mathbb{N}^{\star}$ 
be a maximal free family of elements of $G$ over $\mathbb{C}$
with:
$$
e_0=e_n=1
$$
such that ($i=1,\ldots,n-1$):
$$
\mathrm{Im}(e_i)\neq 0
$$
If for all $a\in A$, $a^{\star}=a$, then the image of the free 
family $e\setminus\{e_0\}=\{e_1,\ldots,e_{n-1}\}$ with $n\in\mathbb{N}^{\star}$,
of elements of $G$ under $\mathrm{Im}$, is
a free family of elements of $\mathbb{F}\cap\mathbb{R}$ over $A$.
\end{theorem}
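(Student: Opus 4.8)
The plan is to convert a dependence relation among the imaginary parts into a single relation among the generators $e_0,\dots,e_{n-1}$ and then invoke the freeness of $e$. So I would start from $a_1,\dots,a_{n-1}\in A$ with $\sum_{i=1}^{n-1}a_i\,\mathrm{Im}(e_i)=0$ and aim to show every $a_i=0$. The first move is to introduce the auxiliary element $y=\sum_{i=1}^{n-1}a_ie_i\in A[S]$ and compute $\mathrm{Im}(y)$. By additivity of $\mathrm{Im}$ (property (\ref{p4.52})) it suffices to handle each summand, and here the hypothesis $a^{\star}=a$ for all $a\in A$ is what makes things clean: writing $\mathrm{Im}(a_ie_i)=-i\,(a_ie_i-(a_ie_i)^{\star})/2$ and using $(a_ie_i)^{\star}=e_i^{\star}a_i^{\star}=a_ie_i^{\star}$ (property (\ref{p.13}) together with commutativity and $a_i^{\star}=a_i$), one obtains $\mathrm{Im}(a_ie_i)=a_i\,\mathrm{Im}(e_i)$ directly, without needing the non-unit restriction of property (\ref{p4.53}). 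Hence $\mathrm{Im}(y)=\sum_{i=1}^{n-1}a_i\,\mathrm{Im}(e_i)=0$.

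Next I would convert $\mathrm{Im}(y)=0$ into a usable identity. By Theorem (\ref{t4.54}), $y=\mathrm{Re}(y)+i\,\mathrm{Im}(y)=\mathrm{Re}(y)$, so $y\in\mathbb{F}\cap\mathbb{R}$ (equivalently $y^{\star}=y$). Since $e_0=1$, this reads
$$\sum_{i=1}^{n-1}a_ie_i=\mathrm{Re}(y)=\mathrm{Re}(y)\,e_0,$$
which I rearrange to
$$\mathrm{Re}(y)\,e_0-\sum_{i=1}^{n-1}a_ie_i=0.$$
This is a single vanishing linear combination of the entire family $e=\{e_0,e_1,\dots,e_{n-1}\}$, carrying the coefficient $\mathrm{Re}(y)\in\mathbb{F}\cap\mathbb{R}$ on $e_0$ and the coefficients $-a_i\in A$ on the remaining generators.

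The crux is the final appeal to freeness, and this is where the hypotheses must be used carefully. Because $e$ is assumed to be a maximal free family of elements of $G$ (in the sense of Definition (\ref{d4.57}), with the coefficients read in the ambient ring over which $e$ is free), the vanishing combination forces every coefficient to be $0$; in particular $a_1=\dots=a_{n-1}=0$, as required, and incidentally $\mathrm{Re}(y)=0$ so that $y=0$. The condition $\mathrm{Im}(e_i)\neq 0$ for each $i$ guarantees that none of the images is the zero element, so that $\{\mathrm{Im}(e_1),\dots,\mathrm{Im}(e_{n-1})\}$ is genuinely an $(n-1)$-element free family over $A$ rather than collapsing. I expect the main obstacle to lie in reconciling the coefficient rings at this last step: $\mathrm{Re}(y)$ sits in $\mathbb{F}\cap\mathbb{R}$ while the $a_i$ sit in $A$, so one must apply freeness of $e$ over a ring large enough to contain both coefficient types. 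The secondary point requiring care is the scaling identity $\mathrm{Im}(a_ie_i)=a_i\,\mathrm{Im}(e_i)$ in the first step, which I would establish uniformly—including when $a_i\in U(A[S])$—precisely by leaning on $a_i^{\star}=a_i$ rather than on property (\ref{p4.53}).
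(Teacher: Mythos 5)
Your proof is correct and follows essentially the same route as the paper's: convert the relation among imaginary parts into $\mathrm{Im}\left(\sum_{i=1}^{n-1}a_ie_i\right)=0$ using $a_i^{\star}=a_i$, use the decomposition $x=\mathrm{Re}(x)+i\,\mathrm{Im}(x)$ to produce a single vanishing linear combination of the full family $\{e_0,e_1,\ldots,e_{n-1}\}$ (with the real part as the coefficient of $e_0$), and invoke freeness of $e$ over $\mathbb{C}$. Your two points of caution are both harmless: deriving $\mathrm{Im}(a_ie_i)=a_i\,\mathrm{Im}(e_i)$ directly from $a_i^{\star}=a_i$ is a minor refinement of the paper's appeal to property (\ref{p4.53}), and the mixed coefficient types ($\mathrm{Re}(y)\in\mathbb{F}\cap\mathbb{R}$, $a_i\in A$) pose no problem since both lie in $\mathbb{C}$ and the freeness being invoked is over $\mathbb{C}$, exactly as in the paper.
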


\begin{proof}
Let $e=\{e_0,e_1,\ldots,e_{n-1}\}$ with $n\in\mathbb{N}^{\star}$ be a
a maximal free family of elements of $G$ over $\mathbb{C}$ with:
$$
e_0=e_n=1
$$
such that ($i=1,\ldots,n-1$):
$$
\mathrm{Im}(e_i)\neq 0
$$ 
If for elements $a_1,\ldots,a_{n-1}$ of $A$ with $n\in\mathbb{N}^{\star}$:
$$
{\displaystyle\sum^{n-1}_{i=1}}a_i\mathrm{Im}(e_i)=0
$$
then since $a^{\star}_i=a_i$ for all $i\in\{1,\ldots,n-1\}$, using the
properties (\ref{p4.52}) and (\ref{p4.53}), we have ($n\in\mathbb{N}^{\star}$):
$$
\mathrm{Im}\left({\displaystyle\sum^{n-1}_{i=1}}a_ie_i\right)=0
$$
which implies that (see also the properties (\ref{p4.47}) and (\ref{p4.48}), 
$n\in\mathbb{N}^{\star}$):
$$
{\displaystyle\sum^{n-1}_{i=1}}a_ie_i=
\mathrm{Re}\left({\displaystyle\sum^{n-1}_{i=1}}a_ie_i\right)
={\displaystyle\sum^{n-1}_{i=1}}a_i\mathrm{Re}(e_i)
$$
Since $\mathrm{Re}(e_i)\in\mathbb{F}\cap\mathbb{R}$, then there exists
an element $c_0$ of 
$\mathbb{F}\cap\mathbb{R}$ such that ($n\in\mathbb{N}^{\star}$):
$$
c_0e_0={\displaystyle\sum^{n-1}_{i=1}}a_i\mathrm{Re}(e_i)
$$
It gives ($n\in\mathbb{N}^{\star}$):
$$
{\displaystyle\sum^{n-1}_{i=1}}a_ie_i=c_0e_0
$$
Since  $e=\{e_0,e_1,\ldots,e_{n-1}\}$ is a free family over
$\mathbb{C}$, it implies 
that ($n\in\mathbb{N}^{\star}$): 
$$
c_0=a_1=\ldots=a_{n-1}=0
$$
So, we get ($n\in\mathbb{N}^{\star}$ and $a_1,\ldots,a_{n-1}\in A$):
$$
{\displaystyle\sum^{n-1}_{i=1}}a_i\mathrm{Im}(e_i)=0\,\Rightarrow\,
a_1=\ldots=a_{n-1}=0
$$
It means that $\{\mathrm{Im}(e_1),\ldots,\mathrm{Im}(e_{n-1})\}$ with
$n\in\mathbb{N}^{\star}$ is a free
family of elements of $\mathbb{F}\cap\mathbb{R}$ over $A$.
\end{proof}

\begin{theorem}\label{t4.59}
Let $e=\{e_0,e_1,\ldots,e_{n-1}\}$ with $n\in\mathbb{N}^{\star}$ 
be a maximal free family of elements of $G$ over $\mathbb{C}$ with:
$$
e_0=e_n=1
$$
such that ($i=1,\ldots,n-1$):
$$
\mathrm{Re}(e_i)\neq 0
$$
If for all $a\in A$, $a^{\star}=a$, then the image of the free
family $e\setminus\{e_0\}=\{e_1,\ldots,e_{n-1}\}$ with $n\in\mathbb{N}^{\star}$,
of elements of $G$ under $\mathrm{Re}$, is
a free family of elements of $\mathbb{F}\cap\mathbb{R}$ over $A$.
\end{theorem}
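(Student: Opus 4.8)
The plan is to mirror the proof of Theorem~\ref{t4.58}, interchanging the roles of $\mathrm{Re}$ and $\mathrm{Im}$. I would start from a putative $A$-linear dependence among the images, namely $\sum_{i=1}^{n-1}a_i\mathrm{Re}(e_i)=0$ with $a_1,\ldots,a_{n-1}\in A$, and the goal is to force $a_1=\cdots=a_{n-1}=0$, which by Definition~\ref{d4.57} is exactly the assertion that $\{\mathrm{Re}(e_1),\ldots,\mathrm{Re}(e_{n-1})\}$ is free over $A$.

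First I would pull the coefficients inside $\mathrm{Re}$. Since the standing hypothesis gives $a_i^{\star}=a_i$ for every $i$, the additivity of $\mathrm{Re}$ (Property~\ref{p4.47}) together with $\mathrm{Re}(a_ie_i)=a_i\mathrm{Re}(e_i)$ (Property~\ref{p4.48}) yields $\mathrm{Re}\!\left(\sum_{i=1}^{n-1}a_ie_i\right)=0$. Writing $z:=\sum_{i=1}^{n-1}a_ie_i$, Theorem~\ref{t4.54} then gives $z=i\,\mathrm{Im}(z)$, so that $z$ is purely imaginary. Applying Properties~\ref{p4.52} and~\ref{p4.53} in the same way gives $\mathrm{Im}(z)=\sum_{i=1}^{n-1}a_i\mathrm{Im}(e_i)$, a quantity lying in $\mathbb{F}\cap\mathbb{R}$, which I would call $c_0$. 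Hence $\sum_{i=1}^{n-1}a_ie_i=c_0\,i$.

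To finish, I would use $e_0=1$ to write $c_0i=(c_0i)e_0$, so the last identity becomes the relation $\sum_{i=1}^{n-1}a_ie_i-(c_0i)e_0=0$. This is a $\mathbb{C}$-linear dependence among $e_0,e_1,\ldots,e_{n-1}$, whose coefficient of $e_0$ is $-c_0i$ and whose coefficient of $e_j$ for $j\geq 1$ is $a_j$. Since $e$ is a maximal free family over $\mathbb{C}$, all of these coefficients must vanish: $c_0i=0$, which forces $c_0=0$ because $i\neq 0$, together with $a_1=\cdots=a_{n-1}=0$, as desired.

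The one place where this proof genuinely departs from that of Theorem~\ref{t4.58} is the nature of the coefficient attached to $e_0$: there the real combination produced a \emph{real} multiple $c_0e_0$ of $e_0=1$, whereas here the vanishing of the real part instead produces the \emph{purely imaginary} multiple $(c_0i)e_0$. I expect this to be the only subtle point, and it causes no difficulty precisely because the freeness is taken over $\mathbb{C}$ rather than over $\mathbb{R}$ or $A$, so $c_0i\in\mathbb{C}$ is a legitimate coefficient whose vanishing still forces $c_0=0$. A minor bookkeeping remark worth recording: Properties~\ref{p4.48} and~\ref{p4.53} are stated for $a\notin U(A[S])$, but the hypothesis $a^{\star}=a$ for all $a\in A$ lets one recompute $\mathrm{Re}(a_ie_i)=a_i\mathrm{Re}(e_i)$ and $\mathrm{Im}(a_ie_i)=a_i\mathrm{Im}(e_i)$ directly from $(a_ie_i)^{\star}=e_i^{\star}a_i^{\star}=e_i^{\star}a_i$ (Property~\ref{p.13}), so the argument applies even when some $a_i$ happens to be a unit.
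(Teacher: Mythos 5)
Your proposal is correct and follows essentially the same route as the paper's own proof: pull the self-conjugate coefficients inside $\mathrm{Re}$, use the vanishing real part together with Theorem~\ref{t4.54} to write $\sum_{i=1}^{n-1}a_ie_i$ as a purely imaginary multiple of $e_0$, and invoke freeness of $e$ over $\mathbb{C}$ to kill all coefficients. Your two added remarks---that the coefficient of $e_0$ is $-c_0i$ rather than $c_0$ (harmless since $i\neq 0$), and that Properties~\ref{p4.48} and~\ref{p4.53} can be rederived directly from $a_i^{\star}=a_i$ when some $a_i$ is a unit---are careful refinements of points the paper glosses over, not a different argument.
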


\begin{proof}
Let $e=\{e_0,e_1,\ldots,e_{n-1}\}$ with $n\in\mathbb{N}^{\star}$ be a
a maximal free family of elements of $G$ with:
$$
e_0=e_n=1
$$
such that ($i=1,\ldots,n-1$):
$$
\mathrm{Re}(e_i)\neq 0
$$ 
If for elements $a_1,\ldots,a_{n-1}$ of $A$ with $n\in\mathbb{N}^{\star}$:
$$
{\displaystyle\sum^{n-1}_{i=1}}a_i\mathrm{Re}(e_i)=0
$$
then since $a^{\star}_i=a_i$ for all $i\in\{1,\ldots,n-1\}$, using the
properties (\ref{p4.47}) and (\ref{p4.48}), we have ($n\in\mathbb{N}^{\star}$):
$$
\mathrm{Re}\left({\displaystyle\sum^{n-1}_{i=1}}a_ie_i\right)=0
$$
which implies that (see also the properties (\ref{p4.52}) and (\ref{p4.53}), 
$n\in\mathbb{N}^{\star}$):
$$
{\displaystyle\sum^{n-1}_{i=1}}a_ie_i=
i\mathrm{Im}\left({\displaystyle\sum^{n-1}_{i=1}}a_ie_i\right)
=i{\displaystyle\sum^{n-1}_{i=1}}a_i\mathrm{Im}(e_i)
$$
Since $\mathrm{Im}(e_i)\in\mathbb{F}\cap\mathbb{R}$, then there exists
an element $c_0$ of 
$\mathbb{F}\cap\mathbb{R}$ such that ($n\in\mathbb{N}^{\star}$):
$$
c_0e_0={\displaystyle\sum^{n-1}_{i=1}}a_i\mathrm{Im}(e_i)
$$
It gives ($n\in\mathbb{N}^{\star}$):
$$
{\displaystyle\sum^{n-1}_{i=1}}a_ie_i=ic_0e_0
$$
Since  $e=\{e_0,e_1,\ldots,e_{n-1}\}$ is a free family over
$\mathbb{C}$, it implies that ($n\in\mathbb{N}^{\star}$): 
$$
c_0=a_1=\ldots=a_{n-1}=0
$$
So, we get ($n\in\mathbb{N}^{\star}$ and $a_1,\ldots,a_{n-1}\in A$):
$$
{\displaystyle\sum^{n-1}_{i=1}}a_i\mathrm{Im}(e_i)=0\,\Rightarrow\,
a_1=\ldots=a_{n-1}=0
$$
It means that $\{\mathrm{Re}(e_1),\ldots,\mathrm{Re}(e_{n-1})\}$ with
$n\in\mathbb{N}^{\star}$ is a free
family of elements of $\mathbb{F}\cap\mathbb{R}$ over $A$.
\end{proof}

\begin{property}\label{p4.60}
Let $x$ be an element of $A[S]$. Then we have:
$$
xx^{\star}=N(x)^2
$$
\end{property}

\begin{proof}
Let $x$ be an element of $A[S]$. If $x\in\ker N$, then since $0^{\star}=0$
(see the remark (\ref{r4.37})), using the property (\ref{p.2}) of the
definition (\ref{d4.3}), we
have ($x\in\ker N$): 
$$
00^{\star}=00=0=N(x)
$$
In the following, we assume that $x\not\in\ker N$.
From the property (\ref{p.9}) of the
definition (\ref{d4.25}), using the theorem (\ref{t4.38}), we have
($x\not\in\ker N$): 
$$
xx^{\star}=u(x)N(x)u(x^{\star})N(x^{\star})=u(x)N(x)u(x)^{-1}N(x)
$$
$$
xx^{\star}=u(x)u(x)^{-1}N(x)N(x)=N(x)^2
$$
\end{proof}

\begin{remark}\label{r4.61}
Let $x,y$ be two elements of $A[S]$. Then we have:
$$
N(x+y)^2=(x+y)(x+y)^{\star}=(x+y)(x^{\star}+y^{\star})
$$
$$
N(x+y)^2=xx^{\star}+xy^{\star}+yx^{\star}+yy^{\star}
$$
$$
N(x+y)^2=N(x)^2+2\mathrm{Re}(xy^{\star})+N(y)^2
$$
So, $N$ satisfies a triangular inequality if, and only if 
$\mathrm{Re}(xy^{\star})\leq N(xy)$. In this case, $N$ behaves as a
norm on $A[S]$. 
\end{remark}

\begin{definition}\label{d4.62}
The radius function is the function defined on $A[S]$ which associates
the unique element $\sqrt{xx^{\star}}$ of $\mathbb{R}_+$ to $x\in A[S]$:
\begin{eqnarray}
r:A[S]&\longrightarrow&\mathbb{R}_+
\nonumber\\
x&\mapsto& r(x)=\sqrt{xx^{\star}}
\nonumber
\end{eqnarray}
with property:
\[
||r(x)||_{\mathbb{F}}=r(x)\label{p.14}\tag{P.14}
\]
\end{definition}

\begin{corollary}\label{c4.63}
Let $x$ be an element of $A[S]$. Then, we have:
$$
N(x)=r(x)
$$
\end{corollary}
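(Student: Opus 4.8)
The plan is to read the result off directly from Property~\ref{p4.60}, which already contains the algebraic substance of the claim. First I would invoke Property~\ref{p4.60} to write $xx^{\star}=N(x)^2$ for the given $x\in A[S]$; note that this holds for every $x$, including the case $x\in\ker N$ (where both sides vanish), so no case split is required. Substituting this identity into the definition of the radius function (Definition~\ref{d4.62}) gives
$$
r(x)=\sqrt{xx^{\star}}=\sqrt{N(x)^2}.
$$

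The only point requiring a moment's care is the choice of branch of the square root. By Definition~\ref{d4.62} the value $r(x)$ is declared to lie in $\mathbb{R}_+$, so $\sqrt{N(x)^2}$ must be interpreted as the non-negative square root of $N(x)^2$, namely $||N(x)||_{\mathbb{F}}$ (recall $N(x)\in\mathbb{F}\cap\mathbb{R}$, so its modulus is computed by $||\,\,||_{\mathbb{F}}$). I would then appeal to Property~(\ref{p.7}) of Definition~\ref{d4.3}, which asserts $||N(x)||_{\mathbb{F}}=N(x)$; this is precisely the statement that $N(x)$ is already non-negative, so extracting the non-negative root returns $N(x)$ itself rather than $-N(x)$.

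Combining these observations yields $r(x)=||N(x)||_{\mathbb{F}}=N(x)$, the desired equality. I do not expect any genuine obstacle here: the whole content of the corollary is carried by Property~\ref{p4.60}, while Property~(\ref{p.7}) plays only the auxiliary role of fixing the sign of the square root so that the principal branch coincides with $N(x)$. It is worth remarking that this corollary is in effect the converse direction of the consistency built into the definition of $N$: the magnitude, originally specified abstractly through Properties~(\ref{p.1})--(\ref{p.8}), turns out to be computable concretely as $\sqrt{xx^{\star}}$, so that $N$ genuinely deserves to be read as the ``size'' of $x$ measured in $\mathbb{F}\cap\mathbb{R}$.
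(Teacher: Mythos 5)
Your proposal is correct and follows essentially the same route as the paper: both rest on Property~\ref{p4.60} ($xx^{\star}=N(x)^2$) and then fix the sign using Property~(\ref{p.7}), with the non-negativity of $r(x)$ (which the paper accesses via Property~(\ref{p.14}) in a short argument by contradiction on the case $N(x)=-r(x)$, while you invoke the codomain $\mathbb{R}_+$ directly). The difference is purely organizational, not mathematical.
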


\begin{proof}
Let $x$ be an element of $A[S]$. Using the property
(\ref{p4.60}), we have ($x\in A[S]$):
$$
N(x)^2=xx^{\star}=r^2(x)
$$
So, either $N(x)=-r(x)$ or
$N(x)=r(x)$. If absurdly, $N(x)=-r(x)$,
from the property
(\ref{p.7}) of the definition (\ref{d4.3}), then we have:
$$
||N(x)||_{\mathbb{F}}=N(x)=-r(x)
$$ 
Using the property (\ref{p.14}) of the definition (\ref{d4.62}), it gives:
$$
||N(x)||_{\mathbb{F}}=||-r(x)||_{\mathbb{F}}=||r(x)||_{\mathbb{F}}=r(x)
$$
It results that $||N(x)||_{\mathbb{F}}=r(x)$ which contradicts the
assumption. Therefore, we have:
$$
||N(x)||_{\mathbb{F}}=N(x)=r(x)
$$ 
\end{proof}

\noindent
Recall that
any non-zero element of $A[S]$ is invertible in $\mathbb{F}$. Thus, for
any non-zero element $x$ of $A[S]$, the fraction $\frac{1}{x}$
defined on $\mathbb{F}$ means also the inverse $x^{-1}$ in
$\mathbb{F}$.

\begin{property}\label{p4.64}
Let $x$ be an element of $A[S]\setminus\ker N$. Then in 
$\mathbb{F}$, we have ($x\not\in\ker N$):
$$
\frac{1}{x}=\frac{1}{N(x)^2}x^{\star}
$$
\end{property}

\begin{proof}
Let $x$ be an element of $A[S]\setminus\ker N$. We have obviously:
$$
xx^{\star}=xx^{\star}
$$
Since $x\neq 0$ is assumed to be invertible in $\mathbb{F}$, then in 
$\mathbb{F}$, we have ($x\not\in\ker N$): 
$$
\frac{1}{x}xx^{\star}=x^{\star}
$$
Since $xx^{\star}\neq 0$ is invertible in $\mathbb{F}$, then from
the property (\ref{p4.60}), in $\mathbb{F}$, we have ($x\not\in\ker N$):
$$
\frac{1}{x}=x^{\star}\frac{1}{xx^{\star}}
$$
$$
\frac{1}{x}=\frac{1}{xx^{\star}}x^{\star}=\frac{1}{N(x)^2}x^{\star}
$$
\end{proof}

\begin{property}\label{p4.65}
Let $x$ be an element of $A[S]\setminus\ker N$. Then in 
$\mathbb{F}$, we have ($x\not\in\ker N$):
$$
u(x)=\frac{1}{N(x)}x
$$
\end{property}

\begin{proof}
Let $x$ be an element of $A[S]\setminus\ker N$. Using the definition
(\ref{d4.25}), we have ($x\not\in\ker N$):
$$
x=N(x)u(x)
$$
Then in 
$\mathbb{F}$, we have ($x\not\in\ker N$):
$$
\frac{1}{N(x)}x=u(x)
$$
\end{proof}

\begin{property}\label{p4.66}
If $||\,\,||_{\mathbb{F}}$ is an extension of $r$ to $\mathbb{F}$,
then for all $x\not\in\ker N$ of $A[S]$, in $\mathbb{F}$, we have
($x\not\in\ker N$):
$$
||u(x)||_{\mathbb{F}}=1
$$
\end{property}

\begin{proof}
Let $x$ be an element of $A[S]\setminus\ker N$. Then in $\mathbb{F}$,
we have ($x\not\in\ker N$):
$$
||u(x)||_{\mathbb{F}}=\left|\left|\frac{1}{N(x)}x\right|\right|_{\mathbb{F}}
=\left|\left|\frac{1}{N(x)}\right|\right|_{\mathbb{F}}||x||_{\mathbb{F}}
$$
$$
||u(x)||_{\mathbb{F}}=\frac{1}{||N(x)||_{\mathbb{F}}}||x||_{\mathbb{F}}
=\frac{1}{r(x)}r(x)=1
$$
\end{proof}

\begin{property}\label{p4.67}
Let $x$ be an element of $A[S]\setminus\ker N$. If $||\,\,||_{\mathbb{F}}$ is
an extension of $r$ to $\mathbb{F}$ and if $\,\overline{\phantom{x}}$ is an
extension of $\star$ operation to $\mathbb{F}$ such that in
$\mathbb{F}$ ($x\not\in\ker N$):
$$
\overline{\frac{1}{x}}=\frac{1}{||x||^2_{\mathbb{F}}}x
$$
then in $\mathbb{F}$ ($x\not\in\ker N$):
$$
\overline{\frac{1}{x}}=\frac{1}{\overline{x}}
$$
\end{property}

\begin{proof}
Let $x$ be an element of $A[S]\setminus\ker N$. From the property
(\ref{p4.64}), we have ($x\not\in\ker N$):
$$
\frac{1}{x^{\star}}=\frac{1}{N(x^{\star})^2}(x^{\star})^{\star}
$$
Using the property (\ref{p.11}) of the definition
(\ref{d4.36}) and the theorem (\ref{t4.38}), it comes that ($x\not\in\ker N$):
$$
\frac{1}{x^{\star}}=\frac{1}{N(x)^2}x
$$
Since $||\,\,||_{\mathbb{F}}$ is an extension of $r$ to $\mathbb{F}$,
using the corollary (\ref{c4.63}),
it gives ($x\not\in\ker N$):
$$
\frac{1}{x^{\star}}=\frac{1}{||x||^2_{\mathbb{F}}}x=\overline{\frac{1}{x}}
$$
Or, since $\overline{\phantom{x}}$ is an
extension of $\star$ operation to $\mathbb{F}$, we obtain
($x\not\in\ker N$):
$$
\frac{1}{\overline{x}}=\overline{\frac{1}{x}}
$$
\end{proof}

\section{{The fundamental theorem of arithmetic in an entire 
ring}}\label{s5}

In this section, we shall consider an entire subring $A$ of a subfield
$\mathbb{F}$ of $\mathbb{C}$ such that:
$$
\forall\,\,a\in A\setminus\{0\},\,\,a\not\in\ker N
$$
$$
\forall\,\,a\in A,\,\,N(a)\in A
$$

\begin{definition}\label{d5.1}
Let $a,b$ be two elements of $A$.
\\[0.1in]
Let $\mathcal{D}(a,b)$ be the set of
common divisors of $a,b$. If there exists an element $g$ of
$\mathcal{D}(a,b)$ such that whatever $d\in\mathcal{D}(a,b)$, $d|g$
and if $N(g)$ belongs to $A\setminus (U(A)\setminus\{1\})$,
then $N(g)$ is defined as the greatest common divisor\index{greatest
common divisor} of $a,b$. The
element $N(g)$ of $\mathcal{D}(a,b)$ is denoted $\gcd(a,b)$. Of course,
if $\gcd(a,b)$ exists, then $\gcd(a,b)=\gcd(b,a)$.
\\[0.1in]
If $\mathcal{D}(a,b)=U(A)$, by convention, we set $\gcd(a,b)=1$. In
such a case, the two elements $a,b$ are said to be relatively
primes. In particular, $\gcd(a,b)=1$ for all $a\in U(A)$ and for all
$b\in A$. We have also $\gcd(1,a)=1$ for all $a\in A$. Moreover, for
any $v,v'\in U(A)$, $\mathcal{D}(v,v')=U(A)$ and $\gcd(v,v')=1$.
\\[0.1in]
A non-zero element $p$ of $A$ is said to be
irreducible\index{irreducible element} if, and only if, 
$p\not\in U(A)\cup\{0\}$ and we have 
$\mathcal{D}(p)=U(A)\cup pU(A)$. Moreover, a non-zero irreducible
element $p$ of $A$ is said to be a prime\index{prime} if, and only if,
$N(p)=p$. By convention,
the elements of $U(A)$ are not irreducible. So, any irreducible
element of $A$ is not
invertible. Moreover, when $p$ of $A$ is prime, we have 
$\gcd(a,p)=1$ if $p\!\not|\,a$ and $\gcd(a,p)=p$ if $p|a$.
\\[0.1in]
Let $\mathcal{M}(a,b)$ be the set of
common multiples of $a,b$. If there exists an element $\ell$ of
$\mathcal{M}(a,b)$ such that whatever $m\in\mathcal{M}(a,b)$, $\ell|m$
and if $N(\ell)$ belongs to $A\setminus (U(A)\setminus\{1\})$,
then $N(\ell)$ is defined as the least common multiple\index{least
common multiple} of $a,b$. The
element $N(\ell)$ of $\mathcal{M}(a,b)$ is denoted $\lcm(a,b)$. Of course,
if $\lcm(a,b)$ exists, then $\lcm(a,b)=\lcm(b,a)$.
\end{definition}

\begin{remark}\label{r5.2}
$$\mathcal{D}(a,b)=\mathcal{D}(a)\cap\mathcal{D}(b)$$
$$U(A)\subseteq\mathcal{D}(a,b)$$
$$\mathcal{M}(a,b)=\mathcal{M}(a)\cap\mathcal{M}(b)=aA\cap bA$$
\end{remark}

\begin{property}\label{p5.3}
If $A$ contains at least a prime element and if $U(A)$ has finite order,
then any non-zero element of $A$ which doesn't belong to
$U(A)\cup\{0\}$ such that $|\mathcal{D}(x)|$ is finite, has a prime divisor.
\end{property}

\begin{proof}
We assume that $A$ contains at least a prime element.
\\[0.1in]
Let $x$ be a non-zero element of $A$ which does not belong to
$U(A)\cup\{0\}$ such that $|\mathcal{D}(x)|$ is finite. Since
$x\in\mathcal{D}(x)$, $\mathcal{D}(x)$ is 
non-empty. If $x$ is prime, we find a prime divisor of $x$, namely $x$
itself. Using the remark (\ref{r3.3}), there exists a
non-zero element $d_1$ 
of $\mathcal{D}(x)$ which does not belong to $U(A)$ such that
$\mathcal{D}(d_1)\subseteq\mathcal{D}(x)$ and: 
$$
x=a_1d_1
$$
with $a_1\in A\setminus\{0\}$. Notice that $\mathcal{D}(d_1)$ is
non-empty since $d_1\in\mathcal{D}(d_1)$ and $\gcd(a_1,d_1)$ is not
necessarily equal to $1$. Notice also that if we cannot find $d_1$ in
$\mathcal{D}(x)$ such that $a_1\not\in U(A)$, then
$\mathcal{D}(x)=U(A)\cup xU(A)$ and so $N(x)$ is prime. If $d_1$ is
not a prime element of $A$ and if $a_1\not\in U(A)$, since $d_1$ is a
non-zero element of $A$ which does not belong 
to $U(A)$, there exists a non-zero element $d_2$ of
$\mathcal{D}(d_1)$ which does not belong to $U(A)$ such that
$\mathcal{D}(d_2)\subseteq\mathcal{D}(d_1)$ with
$\mathcal{D}(d_2)\neq\emptyset$ and: 
$$
d_1=a_2d_2
$$
So:
$$
x=a_1a_2d_2
$$
with $a_1\in A\setminus (U(A)\cup\{0\})$ and
$a_2\in A\setminus\{0\}$. Notice that $\gcd(a_2,d_2)$ as well as 
$\gcd(a_1a_2,d_2)$ is not necessarily equal to $1$. Notice also that
if we cannot find $d_2$ in 
$\mathcal{D}(d_1)$ such that $a_2\not\in U(A)$, then
$\mathcal{D}(d_1)=U(A)\cup d_1U(A)$ and so $N(d_1)$ is prime.
If $d_2$ is not a prime element of $A$ and $a_2\not\in U(A)$,
we follow the same steps than above. Thus, we get a sequence
$(\mathcal{D}(d_i))$ of nested non-empty subsets of $A$ such
that ($x\not\in U(A)\cup\{0\}$):
$$
\mathcal{D}(d_0)=\mathcal{D}(x)
$$
$$
\mathcal{D}(d_{i+1})\subseteq\mathcal{D}(d_i)
\,\,\,\,\mathrm{with}\,\,d_i,d_{i+1}\in\mathcal{D}(x)\setminus U(A)
$$
$$
\mathcal{D}(d_i)\neq\emptyset\,\,\,\,\mathrm{with}\,\,
d_i\in\mathcal{D}(x)\setminus U(A)
$$
and ($i\geq 2$, 
$a_1,a_2,\ldots,a_{i-1}\in A\setminus (U(A)\cup\{0\})$ and
$a_i\in A\setminus\{0\}$):
$$
x=a_1\ldots a_{i-1}a_id_i
$$
Since $\mathcal{D}(x)$ has a finite order by assumption, the sequence 
$(\mathcal{D}(d_i))$ is finite. It follows that there exists
$n\in\mathbb{N}$ such that
$\mathcal{D}(d_{n+1})=\mathcal{D}(d_n)$. So, using the property
(\ref{p4.18}), $d_{n+1}|d_n$ and
$d_n|d_{n+1}$ meaning that $d_n$ and $d_{n+1}$ differ from a
multiplicative unit namely ($a_{n+1}=v\in U(A)$):
$$
d_n=ud_{n+1}
$$
It comes that ($a_1,a_2,\ldots,a_n\in A\setminus (U(A)\cup\{0\})$):
$$
x=a_1\ldots a_nd_n
$$
and ($d_n\not\in U(A)\cup\{0\}$ exists so $\mathcal{D}(d_n)\neq\emptyset$):
$$
\mathcal{D}(d_n)\subseteq\ldots\subseteq\mathcal{D}(d_0)
$$
The natural number $n$ is equal to the greatest integer for which
$d_n|x$:
$$
n=\max\{i\in\mathbb{N}\,:\,x=a_1\ldots
a_id_i\,\,\,\,\mathrm{with}\,\,\,\,a_1,\ldots,a_i\in 
A\setminus\{0\}\,\,\,\,\mathrm{and}\,\,\,\,d_i\in A\setminus (U(A)\cup\{0\})\}
$$
So, only $d_n$ and $d_{n+1}$ with their symmetric opposites $-d_n$ and $-d_{n+1}$ 
in $\mathcal{D}(d_n)$ do not belong to $U(A)$. Otherwise, there
exists a non-zero element $b$ in $\mathcal{D}(d_n)$ such that $b\neq v$ and
$b\neq vd_n$ with $v\in U(A)$,
which divides $d_n$, $d_{n+1}$ and so $x$ by transitivity of the relation
of divisibility defined on $A$. But, then $d_n=bc$ with $c\in
A\setminus (U(A)\cup\{0\})$ and $n$ would not be the greatest integer
such that $d_n|x$. We reach to a contradiction meaning that $b$
doesn't exist. It results that $\mathcal{D}(d_n)=U(A)\cup d_nU(A)$
with $d_n\not\in U(A)\cup\{0\}$. So, $N(d_n)$ is a prime element of
$A$ which divides $x$. It proved that $x$ has at least a prime
divisor.

\end{proof}

\begin{corollary}\label{c5.4}
If $A$ contains at least a prime element and if $U(A)$ has finite order,
then for any non-zero element $x$ of $A$ which
doesn't belong to $U(A)\cup\{0\}$ such that $|\mathcal{D}(x)|$ is finite, there 
exists a prime element $p$ of $A$ and a non-zero natural number $n$
such that:
$$
x=ap^n\,\,\,\,\mathrm{with}\,\,\,\,a\in A\setminus\{0\}
\,\,\,\,\mathrm{such\,\,that}\,\,\,\,\gcd(a,p)=1
$$ 
\end{corollary}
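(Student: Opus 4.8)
The plan is to obtain the prime $p$ from the factorization machinery already built in Property \ref{p5.3}, and then run a maximality argument to pull out the full power of $p$ dividing $x$. First I would invoke Property \ref{p5.3}: since $A$ contains a prime element, $U(A)$ has finite order, and $x \notin U(A)\cup\{0\}$ with $|\mathcal{D}(x)|$ finite, that property furnishes a prime element $p$ of $A$ with $p \mid x$ (and $N(p)=p$ by Definition \ref{d5.1}). This disposes of the existence of the prime and reduces the problem to controlling its multiplicity.

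Next I would set $n = \max\{k\in\mathbb{N} : p^k \mid x\}$ and argue that this maximum is well defined. The defining set is non-empty because $p\mid x$ places $1$ in it. To see it is bounded above, I would check that distinct powers of $p$ give distinct divisors of $x$: if $p^{j}$ and $p^{k}$ with $j<k$ were associates, then $p^{k}=v\,p^{j}$ for some $v\in U(A)$, and cancelling $p^{j}$ (legitimate since $p\neq 0$ is regular by Remark \ref{r4.10}) would force $p^{k-j}\in U(A)$, contradicting $p\notin U(A)$. Hence for each admissible $k$ the power $p^{k}$ is a distinct element of $\mathcal{D}(x)$, and since $|\mathcal{D}(x)|$ is finite only finitely many such $k$ occur; therefore $n\geq 1$ exists.

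Having fixed $n$, since $p^{n}\mid x$ there is $a\in A$ with $x=a\,p^{n}$, and $a\neq 0$ because $x\neq 0$ and $A$ is entire. I would then verify $p\nmid a$: otherwise $a=p\,a'$ would give $x=p^{\,n+1}a'$, so $p^{\,n+1}\mid x$, contradicting the maximality of $n$. Finally, since $p$ is prime and $p\nmid a$, the convention recorded in Definition \ref{d5.1} yields $\gcd(a,p)=1$, which completes the claimed factorization $x=a\,p^{n}$ with $\gcd(a,p)=1$. The main obstacle is precisely the well-definedness of $n$: one must exclude an infinite ascending chain of prime-power divisors, and this is exactly where the hypothesis that $|\mathcal{D}(x)|$ is finite enters, through the observation that distinct prime powers represent distinct divisor classes.
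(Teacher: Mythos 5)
Your proof is correct and takes essentially the same route as the paper: invoke Property \ref{p5.3} to obtain a prime $p$ dividing $x$, extract the maximal power of $p$ dividing $x$ using the finiteness of $\mathcal{D}(x)$, and conclude $\gcd(a,p)=1$ from $p\nmid a$ via the convention in Definition \ref{d5.1}. If anything, your argument is more explicit than the paper's at the one delicate point: the paper builds the sequence $x=b_ip^i$ by repeated division and simply asserts that $|\mathcal{D}(x)|<\infty$ forces termination, whereas you justify the bound by showing that distinct powers of $p$ are distinct elements of $\mathcal{D}(x)$ (since $p^{k-j}\in U(A)$ would force $p\in U(A)$).
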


\begin{proof}
We assume that $A$ contains at least a prime element.
\\[0.1in]
Let $x\in A\setminus (U(A)\cup\{0\})$. From the property (\ref{p5.3}),
there exist a prime element $p$ and an element $b$ of
$A\setminus\{0\}$ such that $x=b_1p$. If $\gcd(b_1,p)=1$, then the
property is verified with $n=1$. If $p|b_1$, then there exists a non
zero element $b_2$ such that $b_1=b_2p$ and so $x=b_2p^2$. If
$\gcd(b_2,p)=1$, then the property is verified with $n=2$. If $p|b_2$,
we follow the steps above. Thus, we get a sequence $(b_i)$ of non-zero
elements of $A$ such that:
$$
b_{i+1}=b_ip
$$
and:
$$
x=b_ip^i
$$
with $i\in\mathbb{N}^{\star}$ and $b_i\in A\setminus (U(A)\cup\{0\})$.
Since $|\mathcal{D}(x)|$ is finite, there exists a non-zero natural
number $n$ such that $x=b_np^n$ and $p\!\not|\,b_n$. So, $b_n$ and $p$
are relatively primes which implies that $\gcd(b_n,p)=1$.
Setting $a=b_n$, we obtain $x=ap^n$ with $\gcd(a,p)=1$ and $a\in
A\setminus (U(A)\cup\{0\})$.

\end{proof}

\begin{theorem}[The fundamental theorem of
arithmetic in $A$]\label{t5.5}
Let $k\in\mathbb{N}^{\star}$.
\\[0.1in]
If $A$ contains at least a prime element and if $U(A)$ has finite order, 
any non-zero element $x$ of $A$ which does not belong to
$U(A)\cup\{0\}$ such that $|\mathcal{D}(x)|$ is finite, 
has a decomposition into prime factors up to a
multiplicative unit $v\in U(A)$ as:
$$
x=vp^{n_1}_1\ldots p^{n_k}_k
$$
where $p_1,\ldots,p_k$ which are primes such that $p_i\neq p_j$ for
$i\neq j$ with $i,j\in\llbracket 1,k\rrbracket$
and $n_1,\ldots,n_k\in\mathbb{N}^{\star}$.
\\[0.1in]
This decomposition is unique up to the order of factors.
\end{theorem}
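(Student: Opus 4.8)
The plan is to prove existence and uniqueness separately, both by induction on the finite cardinality $|\mathcal{D}(x)|$, using Corollary \ref{c5.4} as the engine for existence and the magnitude function to absorb the unit ambiguity before attacking uniqueness.

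For existence I would argue by strong induction on $|\mathcal{D}(x)|$ among the non-zero elements $x\notin U(A)\cup\{0\}$ with $|\mathcal{D}(x)|$ finite. In the minimal case $x$ is irreducible, so $\mathcal{D}(x)=U(A)\cup xU(A)$; by property (\ref{p.1}) one has $N(x)\in A$ with $\mathcal{D}(N(x))=\mathcal{D}(x)$, and Corollary \ref{c4.19} gives $x=u(x)N(x)$ with $u(x)\in U(A)$, whence $xU(A)=N(x)U(A)$ and $N(x)$ is irreducible; since $N(N(x))=N(x)$ by (\ref{p.5}), the element $N(x)$ is prime and $x=u(x)N(x)$ is already of the required shape. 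For the inductive step I apply Corollary \ref{c5.4} to write $x=ap^{n}$ with $p$ prime, $n\in\mathbb{N}^{\star}$, $a\in A\setminus\{0\}$ and $\gcd(a,p)=1$. If $a\in U(A)$ we are done with $v=a$ and $k=1$. Otherwise $a\mid x$ gives $\mathcal{D}(a)\subseteq\mathcal{D}(x)$, and the inclusion is strict because $p\in\mathcal{D}(x)$ while $p\notin\mathcal{D}(a)$ (as $\gcd(a,p)=1$ forces $p\nmid a$); hence $|\mathcal{D}(a)|<|\mathcal{D}(x)|$ and the induction hypothesis factors $a=vp_{2}^{n_{2}}\cdots p_{k}^{n_{k}}$ into pairwise distinct primes. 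Since $p\nmid a$, the prime $p$ differs from every $p_{j}$, so $x=vp^{n}p_{2}^{n_{2}}\cdots p_{k}^{n_{k}}$ exhibits a decomposition with distinct primes.

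For uniqueness, suppose $x=vp_{1}^{n_{1}}\cdots p_{k}^{n_{k}}=wq_{1}^{m_{1}}\cdots q_{l}^{m_{l}}$ with $v,w\in U(A)$ and all $p_{i},q_{j}$ prime. Applying $N$ and using multiplicativity (\ref{p.6}) together with Corollary \ref{c4.22} (so that $N(v)=N(w)=1$, since $U(A)$ has finite order) and $N(p_{i})=p_{i}$, $N(q_{j})=q_{j}$, I reduce both sides to the normalized identity $N(x)=p_{1}^{n_{1}}\cdots p_{k}^{n_{k}}=q_{1}^{m_{1}}\cdots q_{l}^{m_{l}}$ in $A$. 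It then suffices to prove uniqueness of this factorization, again by induction on $|\mathcal{D}(x)|$: the prime $p_{1}$ divides $\prod_{j}q_{j}^{m_{j}}$, so I claim $p_{1}=q_{j_{0}}$ for some $j_{0}$; granting this, regularity of $p_{1}$ (Remark \ref{r4.10}) lets me cancel one factor to get $x'=p_{1}^{n_{1}-1}\prod_{i\geq 2}p_{i}^{n_{i}}=q_{j_{0}}^{m_{j_{0}}-1}\prod_{j\neq j_{0}}q_{j}^{m_{j}}$ with $x=p_{1}x'$ and $|\mathcal{D}(x')|<|\mathcal{D}(x)|$, so the induction hypothesis matches the two multisets of primes and their exponents, yielding $k=l$ and equality after reordering.

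The main obstacle is the claim that a prime dividing a product divides one of its factors, i.e.\ Euclid's lemma, since in a general entire ring irreducibility does not by itself force this conclusion. The tool I would use is the $\gcd$-behaviour of primes recorded in Definition \ref{d5.1}: for a prime $p$ one has $\gcd(a,p)=1$ whenever $p\nmid a$, and two distinct primes $p_{1}\neq q_{j}$ satisfy $\mathcal{D}(p_{1})\cap\mathcal{D}(q_{j})=U(A)$. The delicate point is passing from ``$p_{1}$ is coprime to each $q_{j}$'' to ``$p_{1}$ is coprime to $\prod_{j}q_{j}^{m_{j}}$'', which is precisely a Gauss-type step; I expect this to require either the principality of $A$ or the Bezout identity established in section \ref{s7}, and it is the place where the finiteness of $U(A)$ and of $|\mathcal{D}(x)|$ must be combined with the divisibility structure to force $p_{1}$ to coincide with some $q_{j}$. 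Once Euclid's lemma is in hand, the cancellation induction above closes the argument.
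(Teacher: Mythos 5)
Your existence argument is essentially the paper's own: the paper also builds the factorization by iterating Corollary \ref{c5.4}, peeling off one prime power at a time, and it terminates by the finiteness of $|\mathcal{D}(x)|$ exactly where you invoke the strict inclusion $\mathcal{D}(a)\subsetneq\mathcal{D}(x)$; recasting this as strong induction on $|\mathcal{D}(x)|$ is a cosmetic difference. Where you genuinely diverge is uniqueness. The paper only compares two decompositions built from the \emph{same} primes $p_1,\ldots,p_k$ (same $k$, possibly different units and exponents): it forces $w=v$ by observing that no $p_i$ can divide the unit $wv^{-1}$, then cancels $p_1^{n_1}$ by regularity (Remark \ref{r4.10}) and appeals to $\gcd(p_i,p_j)=1$ to conclude $p_1^{m_1-n_1}=1$. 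Your version treats the full statement (a priori different primes $q_j$ on the two sides), and your normalization step --- applying $N$ so that $N(v)=N(w)=1$ (the argument of Corollary \ref{c4.22}, which transfers to $U(A)$ of finite order) and $N(p_i)=p_i$, $N(q_j)=q_j$ reduce the comparison to $p_1^{n_1}\cdots p_k^{n_k}=q_1^{m_1}\cdots q_l^{m_l}$ in $A$ --- is a clean device the paper does not use.

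However, the step you yourself single out as the main obstacle is a genuine gap, and your proposal does not close it. Euclid's lemma in this paper (Lemma \ref{l7.2}) is proved from the Bezout identity (Corollary \ref{c6.5}), which rests on Theorem \ref{t6.3}, and Sections \ref{s6}--\ref{s7} carry the standing hypothesis that $A$ is \emph{principal}. Theorem \ref{t5.5} is stated in Section \ref{s5}, whose only assumptions are that $A$ is an entire subring with $a\not\in\ker N$ for all $a\neq 0$ and $N(a)\in A$ for all $a\in A$; principality is not among them. So importing Lemma \ref{l7.2} involves no circularity (Sections \ref{s6}--\ref{s7} nowhere use Theorem \ref{t5.5}), but it proves the theorem only under a strictly stronger hypothesis: in a general entire ring, a prime in the sense of Definition \ref{d5.1} (that is, $\mathcal{D}(p)=U(A)\cup pU(A)$ with $N(p)=p$) is merely an irreducible element, and irreducibility alone does not yield the prime-divides-product property. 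To be fair, the paper's own uniqueness proof buries the same difficulty: its assertion that no factor of $\prod_{i\neq 1}p_i^{n_i}$ divides $p_1^{m_1-n_1}$, ``hence $p_1^{m_1-n_1}=1$'', is precisely the Gauss-type step you identified, asserted rather than derived from the Section \ref{s5} axioms. Your diagnosis of where the difficulty sits is therefore accurate, but as written your uniqueness argument (like the paper's) is complete only if one either adds principality of $A$ to the hypotheses or derives Euclid's lemma directly from the magnitude-function axioms, which your proposal leaves open.
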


\begin{proof}
Let $k\in\mathbb{N}^{\star}$.
\\[0.1in]
We assume that $A$ contains at least a prime element.
\\[0.1in]
Let $x$ be an element of $A\setminus (U(A)\cup\{0\})$. 
From the corollary (\ref{c5.4}), we know that there exists a prime
element $p_1$ in $\mathcal{D}(x)$ and a non-zero natural number $n_1$
such that:
$$
x=p^{n_1}_1a_1
$$
with $a_1\in A\setminus\{0\}$ and $\gcd(a_1,p_1)=1$. If $a_1=v_1\in
U(A)$, then $x=v_1p^{n_1}_1$ 
and the property is verified. If $a_1\not\in U(A)$, then from the
corollary (\ref{c5.4}), we can find a prime
element $p_2$ in $\mathcal{D}(a_1)$ and a non-zero natural number $n_2$
such that:
$$
a_1=p^{n_2}_2a_2
$$
and so:
$$
x=p^{n_1}_1p^{n_2}_2a_2
$$
with $a_2\in A\setminus\{0\}$, $\gcd(a_2,p_2)=1$ and $p_1\neq p_2$. 
If $a_2=v_2\in U(A)$, then $x=v_2p^{n_1}_1p^{n_2}_2$ 
and the property is verified. If $a_2\not\in U(A)$, we follow the same
steps than above. Thus, we get a sequence $(a_i)$ of elements of
$A\setminus\{0\}$ such that ($i\in\mathbb{N}^{\star}$, 
$p_i$ which is prime of $A$ and $n_i\in\mathbb{N}^{\star}$):
$$
a_{i+1}=p^{n_i}_ia_i
$$
and so:
$$
x=p^{n_1}_1\ldots p^{n_i}_ia_i
$$
with $p_m\neq p_j$ for $m\neq j$ ($m,j\in\llbracket 1,i\rrbracket$).
Since $|\mathcal{D}(x)|$ is finite, the sequence 
$(p^{n_i}_i)$ is finite. Or, the decomposition of $x$ as
$p^{n_1}_1\ldots p^{n_i}_ia_i$ is achieved when $a_i\in U(A)$.
It follows that there exists
$k\in\mathbb{N}$ such that $a_k\in U(A)$. Setting $a_k=v\in U(A)$, it results
that:
$$
x=vp^{n_1}_1\ldots p^{n_k}_k
$$
Afterwards, let prove that the decomposition of $x$ as $vp^{n_1}_1\ldots p^{n_k}_k$
with $u\in U(A)$, $p_i$ which is prime for all $i\in\llbracket
1,k\rrbracket$ and $n_1,\ldots,n_k\in\mathbb{N}^{\star}$,
is unique. Let consider two decompositions of $x$:
$$
x=vp^{n_1}_1\ldots p^{n_k}_k=wp^{m_1}_1\ldots p^{m_k}_k
$$
with $v,w\in U(A)$, $p_i$ which is prime for all $i\in\llbracket
1,k\rrbracket$ and
$n_1,\ldots,n_k,m_1,\ldots,m_k\in\mathbb{N}^{\star}$.
\\[0.1in]
Since $v\in U(A)$, we have:
$$
p^{n_1}_1\ldots p^{n_k}_k=wv^{-1}p^{m_1}_1\ldots p^{m_k}_k
$$
Since $p_i$ for all $i\in\llbracket 1,k\rrbracket$ cannot divide
$wv^{-1}\in U(A)$, it remains only one possibility that is to say
$wv^{-1}=1$ and so $w=v$. It implies that:
$$
p^{n_1}_1\ldots p^{n_k}_k=p^{m_1}_1\ldots p^{m_k}_k
$$
Let assume absurdly that $n_1\neq m_1$ say $n_1<m_1$. Since $p_1$ is
regular (see the remark (\ref{r4.10})), we have ($n_1<m_1$):
$$
\prod^k_{i\neq 1}p^{n_i}_i=p^{m_1-n_1}_1\prod^k_{i\neq 1}p^{n_i}_i
$$
Since $\gcd(p_i,p_j)=1$ for $i\neq j$ with $i,j\in\llbracket
1,k\rrbracket$, no factor of $\prod^k_{i\neq 1}p^{n_i}_i$ divides
$p^{m_1-n_1}_1$. It remains only one possibility that is to say
$p^{m_1-n_1}_1=1$ and so $m_1=n_1$. Following this reasoning for
every $i\in\llbracket 1,k\rrbracket$, it can be shown that $m_i=n_i$
for all $i\in\llbracket 1,k\rrbracket$. Therefore, the decomposition
of $x$ as $up^{n_1}_1\ldots p^{n_k}_k$
with $u\in U(A)$, $p_1,\ldots,p_k$ which are prime such that $p_i\neq p_j$ for
$i\neq j$ such that $i,j\in\llbracket 1,k\rrbracket$ and
$n_1,\ldots,n_k\in\mathbb{N}^{\star}$, 
is unique.
\end{proof}

\section{{Set operations on ideals of a principal entire ring and
 divisibility}}\label{s6}

\begin{theorem}\label{t6.1}
Let $a,b$ be two non-zero elements of the entire principal ring
$A$. If $\lcm(a,b)$ exists, then:
$$
aA\cap bA=\lcm(a,b)A
$$
\end{theorem}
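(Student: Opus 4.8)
The plan is to reduce $aA \cap bA$ to the principal ideal generated by a least common multiple \emph{element}, and then to replace that element by its magnitude.

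By the remark (\ref{r5.2}), $aA \cap bA = \mathcal{M}(a,b)$, the set of common multiples of $a$ and $b$. The hypothesis that $\lcm(a,b)$ exists means, by the definition (\ref{d5.1}), that there is an element $\ell \in \mathcal{M}(a,b)$ dividing every $m \in \mathcal{M}(a,b)$, with $\lcm(a,b) = N(\ell)$ and $N(\ell) \in A \setminus (U(A) \setminus \{1\})$. First I would check that $\ell \neq 0$: since $a, b \neq 0$ and $A$ is entire, $ab$ is a nonzero common multiple, so $\ell$ divides a nonzero element and cannot itself be $0$ (as $0A = \{0\}$, see the remark (\ref{r3.3})).

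Next I would establish $aA \cap bA = \ell A$ by double inclusion. Since $\ell \in aA \cap bA$ we have $a \mid \ell$ and $b \mid \ell$, so the property (\ref{p3.4}) gives $\ell A \subseteq aA$ and $\ell A \subseteq bA$, hence $\ell A \subseteq aA \cap bA$. Conversely, the defining property of $\ell$ is that $\ell \mid m$ for every $m \in aA \cap bA$, i.e. every common multiple lies in $\mathcal{M}(\ell) = \ell A$; thus $aA \cap bA \subseteq \ell A$. Therefore $aA \cap bA = \ell A$.

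The hard part will be the final identification $\ell A = N(\ell) A$, since it is here alone that the magnitude function enters: one must link the abstract witness $\ell$ with the concrete value $N(\ell)$ that the definition designates as $\lcm(a,b)$. Because $\ell \neq 0$, the standing hypotheses of the section \ref{s5} give $\ell \notin \ker N$ and $N(\ell) \in A$, so the property (\ref{p.1}) of the definition (\ref{d4.3}) applies and yields $\mathcal{D}(N(\ell)) = \mathcal{D}(\ell)$. By reflexivity of divisibility, $\ell \in \mathcal{D}(\ell) = \mathcal{D}(N(\ell))$ gives $\ell \mid N(\ell)$, and $N(\ell) \in \mathcal{D}(N(\ell)) = \mathcal{D}(\ell)$ gives $N(\ell) \mid \ell$; applying the property (\ref{p3.4}) to both yields $\ell A = N(\ell) A$. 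Combining the three equalities, $aA \cap bA = \ell A = N(\ell) A = \lcm(a,b) A$, as claimed. I note that principality of $A$ is not actually used once the existence of $\lcm(a,b)$ is granted; the argument rests only on divisibility and the property (\ref{p.1}).
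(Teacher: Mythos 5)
Your proof is correct, but it follows a genuinely different route from the paper's. The paper invokes principality: since $aA\cap bA$ is an ideal of the principal ring $A$, there is a generator $m$ with $aA\cap bA=mA$, and the proof then observes that $m$ divides every common multiple and concludes that $mA=\lcm(a,b)A$. You bypass principality altogether by letting the witness $\ell$ of the definition (\ref{d5.1}) play the role of the generator: its defining property hands you both inclusions $\ell A\subseteq aA\cap bA$ and $aA\cap bA\subseteq\ell A$, and you then prove $\ell A=N(\ell)A$ by combining the property (\ref{p.1}) with double divisibility and the property (\ref{p3.4}). Your route buys two things. First, as you observe, principality is never used once $\lcm(a,b)$ is assumed to exist, so your argument proves the statement for any entire ring satisfying the standing hypotheses of the section \ref{s5}. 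Second, your last step supplies exactly the identification that the paper's proof leaves implicit: its closing appeal to ideals being ``stable by multiplication by $-1$'' identifies generators only up to sign, which suffices in $\mathbb{Z}$ but not in a general entire ring, where generators of a principal ideal differ by arbitrary units; the clean way to pass from the generator of $aA\cap bA$ to $\lcm(a,b)=N(\ell)$ is the chain $mA=\ell A=N(\ell)A$ that you make explicit (equivalently, one could cite the corollary (\ref{c4.19})). What the paper's choice buys in return is structural symmetry with the proof of the theorem (\ref{t6.3}) for the $\gcd$, where principality really is indispensable, since the generator of $aA+bA$ --- that is, the Bezout relation --- cannot be extracted from the definition of the $\gcd$ alone.
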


\begin{proof}
We assume that $\lcm(a,b)$ exists.
\\[0.1in]
Notice that since $a|\lcm(a,b)$ and $b|\lcm(a,b)$, we have
$\lcm(a,b)A\subseteq aA$ and $\lcm(a,b)A\subseteq bA$.\\
So, $\lcm(a,b)A\subseteq aA\cap bA$.
\\[0.1in]
Since $A$ is a principal ring and since $aA\cap bA$ is an ideal, there
exists an element $m$ of $A$ such that $aA\cap bA=mA$. The element $m$
is a generator of $aA\cap bA$ (notice that it is not unique since $-m$
is also a generator of $aA\cap bA$). Let $n\in A$ be a common multiple of $a,b$.
Then, $a|n$ which implies that $nA\subseteq aA$ and $b|n$ which
implies that $nA\subseteq bA$. So, $nA\subseteq aA\cap bA$ or
equivalently $nA\subseteq mA$. It means that $m|n$. Since $n$ is
arbitrary common multiple of $a,b$ and since any ideal of $A$ is
stable by multiplication by $-1$, we deduce that $mA=\lcm(a,b)A$. It
results that $aA\cap bA=\lcm(a,b)A$.
\end{proof}

\begin{remark}\label{r6.2}
If $\lcm(a,b)$ exists such that $\lcm(a,b)\neq a$ and if
$\lcm(a,b)\neq b$, the intersection 
$aA\cap bA$ of the two ideals $aA$ and $bA$ with 
$a,b\in A$ does not contain $a$ and $b$ since the generator of $aA\cap
bA$ is $\lcm(a,b)$. Therefore, $aA\cap bA$ cannot be the smallest
ideal which is generated by the subset $\{a,b\}$.
\end{remark}

\begin{theorem}\label{t6.3}
Let $a,b$ be two non-zero elements of the entire principal ring
$A$. If $\gcd(a,b)$ exists, then:
$$
aA+bA=\gcd(a,b)A
$$
\end{theorem}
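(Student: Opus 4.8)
The plan is to mirror the structure of Theorem \ref{t6.1}, exploiting principality together with the translation between divisibility and inclusion of ideals given by Property \ref{p3.4}. First I would note that, by Remark \ref{r2.1}, the sum $aA+bA$ is an ideal of $A$; since $A$ is principal there is an element $d\in A$ with $aA+bA=dA$, and $d\neq 0$ because the non-zero elements $a,b$ both lie in $dA$. The goal is then to show that $dA$ and $\gcd(a,b)A$ coincide, so that the two generators are associate.

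Next I would identify $d$ as a greatest common divisor in the divisibility sense. Since $a\in aA+bA=dA$ and likewise $b\in dA$, we obtain $d|a$ and $d|b$, so $d\in\mathcal{D}(a,b)$. Conversely, let $c$ be any common divisor of $a$ and $b$; by Property \ref{p3.4} we have $aA\subseteq cA$ and $bA\subseteq cA$, hence $dA=aA+bA\subseteq cA$, and Property \ref{p3.4} again gives $c|d$. Combining these two facts with transitivity of divisibility yields $\mathcal{D}(d)=\mathcal{D}(a,b)$: every divisor of $d$ divides $a$ and $b$, and every common divisor of $a,b$ divides $d$.

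It remains to connect the ideal-theoretic generator $d$ to the arithmetic quantity $\gcd(a,b)$, which by Definition \ref{d5.1} equals $N(g)$ for a greatest common divisor $g$ satisfying $\mathcal{D}(g)=\mathcal{D}(a,b)$. Since $g\neq 0$, the standing hypotheses of Section \ref{s5} give $g\notin\ker N$ and $N(g)\in A$, so Property (\ref{p.1}) applies and yields $\mathcal{D}(N(g))=\mathcal{D}(g)=\mathcal{D}(a,b)=\mathcal{D}(d)$. Applying Property \ref{p4.18} to the non-zero elements $N(g)=\gcd(a,b)$ and $d$, they differ by a unit and therefore generate the same principal ideal, whence $\gcd(a,b)A=dA=aA+bA$.

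The main obstacle I anticipate is not the ideal manipulation but this final bridging step: the $\gcd$ is defined through the magnitude $N$ rather than through a divisibility-greatest element itself, so one must pass through Property (\ref{p.1}) to replace $g$ by $N(g)$ without disturbing the divisor set before invoking Property \ref{p4.18}. I would also record that the argument extracts the associate relation directly from equality of divisor sets, so it requires neither that $U(A)$ be finite nor any hypothesis that $N$ be multiplicatively trivial on units.
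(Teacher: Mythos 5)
Your proof is correct and rests on the same core idea as the paper's: principality produces a generator $d$ of $aA+bA$, which is then recognized as a divisibility-greatest common divisor of $a,b$ and matched against $\gcd(a,b)$. The execution differs in two ways worth noting. First, where you show that every common divisor $c$ divides $d$ purely through ideal inclusions ($aA\subseteq cA$ and $bA\subseteq cA$ give $dA=aA+bA\subseteq cA$, then Property \ref{p3.4}), the paper instead writes the generator as a Bezout combination $g=ax+by$ and invokes linearity of divisibility; it also proves the inclusion $aA+bA\subseteq\gcd(a,b)A$ as a separate first step by the same linearity argument, a step your version absorbs. Second, your explicit bridging step --- passing from the divisibility-greatest element $g$ of Definition \ref{d5.1} to $N(g)=\gcd(a,b)$ via Property (\ref{p.1}) before comparing generators --- is precisely the point the paper compresses into the terse assertion that ``we deduce that $gA=\gcd(a,b)A$,'' so your write-up is more careful exactly where the paper is weakest. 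One small caution on your last move: Property \ref{p4.18} as stated yields a unit of $U(A[S])$, not of $U(A)$, so ``they differ by a unit, hence generate the same ideal of $A$'' is not literally immediate. The cleaner conclusion is that $\mathcal{D}(N(g))=\mathcal{D}(d)$ already gives mutual divisibility in $A$ by reflexivity, whence $N(g)A=dA$ by Property \ref{p3.4}; alternatively, note that the paper itself applies Property \ref{p4.18} with a unit taken in $U(A)$ (proof of Property \ref{p5.3}), so your usage is consistent with its conventions.
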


\begin{proof}
We assume that $\gcd(a,b)$ exists.
\\[0.1in]
Before giving the details of the proof, notice that since
$\gcd(a,b)|a$ and $\gcd(a,b)|b$, by linearity of the 
relation of divisibility defined on $A$, we have $\gcd(a,b)|ax+by$ for
any $x,y\in A$. 
Since $aA+bA$ is an ideal and so since $ax+by\in aA+bA$ for any
$x,y\in A$, we deduce that $aA+bA\subseteq\gcd(a,b)A$. 
\\[0.1in]
Since $A$ is a principal ring and $aA+bA$ is an ideal, there exists an
element $g$ of $A$ such that $aA+bA=gA$. The element $g$ of $A$ is a
generator of $aA+bA$ (notice that $g$ is not unique since its
symmetric $-g$ is also a generator of $aA+bA$ and notice that
$g\in\mathcal{D}(a,b)$). Since $g\in aA+bA$, there
exist two elements $x,y$ of $A$ such that $g=ax+by$. Let $d\in A$ be a
common divisor of $a,b$. Then, by linearity of the relation of
divisibility defined on $A$, $d|g$. Since $d$ is any common divisor of $a,b$ and
since any ideal of $A$ is stable by multiplication by $-1$, we deduce that
$gA=\gcd(a,b)A$. It results that $aA+bA=\gcd(a,b)A$.
\\[0.1in]
In particular, if $\mathcal{D}(a,b)=U(A)$, then by convention
$\gcd(a,b)=1$ and the ideal $aA+bA$ of $A$ is generated by an element
$g\in U(A)$ since $g\in\mathcal{D}(a,b)=U(A)$. From the lemma (\ref{l2.2}),
since $\gcd(a,b)=1$, it means that $aA+bA=A=\gcd(a,b)A$. 
\end{proof}

\begin{remark}\label{r6.4}
Provided $\gcd(a,b)$ and $\lcm(a,b)$ exist, since
$\gcd(a,b)|\lcm(a,b)$, we have the inclusion $\lcm(a,b)A\subseteq 
\gcd(a,b)A$. It is compatible with the fact that: 
$$
aA\cap bA\subseteq
aA\cup bA\subseteq aA+bA
$$
Notice that $aA\cup bA$ is not always an ideal.
\\[0.1in]
Since the ideal $aA+bA$ contains $a$ and $b$ ($a=a1+b0$ and $b=a0+b1$), then
the ideal $aA+bA$ is contained in any ideal which contains $a$ and
$b$. Therefore, $aA+bA$ is the smallest ideal which is
generated by the subset $\{a,b\}$ of $A$.
\end{remark}

\begin{corollary}\label{c6.5}
If $\gcd(a,b)$ exists, then there exist two elements $x,y$ of $A$ such that:
$$
\gcd(a,b)=ax+by
$$
\end{corollary}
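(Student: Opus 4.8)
The plan is to read off the identity directly from Theorem \ref{t6.3}, so this corollary requires essentially no new work beyond a membership check. First I would invoke Theorem \ref{t6.3}, which under the standing hypothesis that $\gcd(a,b)$ exists gives the equality of ideals $aA+bA=\gcd(a,b)A$.

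Next I would observe that $\gcd(a,b)$ lies in its own principal ideal: since $1\in A$, we have $\gcd(a,b)=\gcd(a,b)\cdot 1\in\gcd(a,b)A$. Combining this with the equality above yields $\gcd(a,b)\in aA+bA$.

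Finally I would unfold the definition of the sum of two principal ideals (as in Remark \ref{r2.1}): every element of $aA+bA$ is of the form $ax+by$ with $x,y\in A$. Applying this to the element $\gcd(a,b)$ produces the desired $x,y\in A$ with $\gcd(a,b)=ax+by$, which is exactly the claim.

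The only point that needs care, and it is a mild one, is that $\gcd(a,b)$ as introduced in Definition \ref{d5.1} is a genuine element of $A$ (it is $N(g)$ for a suitable common divisor $g$), so that the symbol $\gcd(a,b)A$ in Theorem \ref{t6.3} really denotes the principal ideal it generates and the membership $\gcd(a,b)\in\gcd(a,b)A$ is legitimate. Beyond this there is no substantive obstacle; the arithmetic content of the B\'ezout identity here is carried entirely by Theorem \ref{t6.3}, and the corollary is merely the translation of the ideal equality into an explicit representation of the generator.
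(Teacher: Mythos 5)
Your proof is correct and follows exactly the paper's route: the paper's own argument is simply the observation that $\gcd(a,b)\in aA+bA$ (via Theorem \ref{t6.3}) followed by unfolding the definition of the sum ideal. You have merely spelled out the membership check $\gcd(a,b)=\gcd(a,b)\cdot 1\in\gcd(a,b)A$ that the paper leaves implicit, which is a fine, if minor, addition.
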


\begin{proof}
We assume that $\gcd(a,b)$ exists.
\\[0.1in]
We know that $\gcd(a,b)\in aA+bA$. It results that there exist two
elements $x,y$ of $A$ such that $\gcd(a,b)=ax+by$.
\end{proof}

\begin{remark}\label{r6.6}
Thus, if $a$ and $b$ of $A$ are relatively primes, then there 
exists $(x,y)\in A^2$ such that $1=ax+by$.
\end{remark}

\section{{The Bezout identity and the Euclid's lemma in a 
principal entire ring}}\label{s7}

\begin{theorem}[Generalization of the Bezout theorem]\label{t7.1}
Let $a,b$ be two elements of $A$.
\\[0.1in]
$\mathcal{D}(a,b)=U(A)$ if, and only if, $aA+bA=A$.
\end{theorem}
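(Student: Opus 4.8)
The plan is to prove the two implications separately, using only the structure of $A$ as a principal entire ring together with the elementary facts about divisibility already established. The guiding idea is the familiar link between a Bezout relation $1=ax+by$ and the fact that the ideal $aA+bA$ is generated by a greatest common divisor of $a$ and $b$.

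For the implication $aA+bA=A\Rightarrow\mathcal{D}(a,b)=U(A)$ I would argue directly and avoid any appeal to principality. Since $aA+bA=A$ and $1\in A$, there exist $x,y\in A$ with $1=ax+by$. Taking any $d\in\mathcal{D}(a,b)$, so that $d\mid a$ and $d\mid b$, the linearity of the divisibility relation recorded in Remark~\ref{r3.3} gives $d\mid(ax+by)=1$; hence $d\in\mathcal{D}(1)=U(A)$, again by Remark~\ref{r3.3}. This yields $\mathcal{D}(a,b)\subseteq U(A)$, and since the reverse inclusion $U(A)\subseteq\mathcal{D}(a,b)$ always holds by Remark~\ref{r5.2}, I would conclude $\mathcal{D}(a,b)=U(A)$.

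For the converse $\mathcal{D}(a,b)=U(A)\Rightarrow aA+bA=A$ I would invoke principality. Because $aA+bA$ is an ideal (Remark~\ref{r2.1}) of the principal ring $A$, there is an element $g\in A$ with $aA+bA=gA$. As $a=a\cdot 1+b\cdot 0$ and $b=a\cdot 0+b\cdot 1$ both lie in $aA+bA=gA$, we get $g\mid a$ and $g\mid b$, so $g\in\mathcal{D}(a,b)=U(A)$. Thus $g$ is a unit, and by Lemma~\ref{l2.2} the ideal $gA$ equals $A$; therefore $aA+bA=A$. This route bypasses the existence of $\gcd(a,b)$, so it does not require the hypotheses of Theorem~\ref{t6.3}, although one could equally read this implication off from the final paragraph of that theorem's proof.

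The argument is essentially routine, and I do not anticipate a genuine obstacle. The only point demanding care is logical economy: I expect the minor difficulty to be in ensuring that each invoked fact holds for arbitrary elements of $A$, without smuggling in an implicit nonvanishing hypothesis. Both the Bezout step $1=ax+by$ and the generator argument via Lemma~\ref{l2.2} hold unconditionally, which is precisely why I prefer them to a reduction through $\gcd(a,b)$.
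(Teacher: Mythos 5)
Your proof is correct, and one half of it coincides with the paper's. The reverse implication ($aA+bA=A\Rightarrow\mathcal{D}(a,b)=U(A)$) is exactly the paper's argument: write $1=ax+by$, use linearity of the divisibility relation to get $d\mid 1$ for every $d\in\mathcal{D}(a,b)$, hence $d\in\mathcal{D}(1)=U(A)$, and finish with the standing inclusion $U(A)\subseteq\mathcal{D}(a,b)$. The difference lies in the forward implication. The paper disposes of it in one line --- ``$\mathcal{D}(a,b)=U(A)$ implies $\gcd(a,b)=1$, which implies $aA+bA=A$'' --- thereby leaning on the convention of Definition~\ref{d5.1} and on Theorem~\ref{t6.3}, whose statement assumes $a,b$ non-zero with $\gcd(a,b)$ existing (the case actually needed is handled in the final paragraph of that theorem's proof). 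You instead inline the underlying mechanism: principality gives $aA+bA=gA$, the generator $g$ divides both $a$ and $b$ since $a,b\in gA$, hence $g\in\mathcal{D}(a,b)=U(A)$, and Lemma~\ref{l2.2} yields $gA=A$. This is the same argument the paper's chain of citations ultimately unwinds to, but your packaging is self-contained, avoids the $\gcd$ formalism and its existence conventions altogether, and applies to arbitrary $a,b\in A$ --- a mild but genuine gain in logical economy, exactly as you observe.
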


\begin{proof}
If $\mathcal{D}(a,b)=U(A)$, then $\gcd(a,b)=1$ which implies that 
$aA+bA=A$. Reciprocally, if $aA+bA=A$, since $1\in A$, there exist two
elements $x,y$ of $A$ such that $1=ax+by$. Let $d\in
\mathcal{D}(a,b)$. By linearity of the relation of divisibility
defined on $A$, $d|ax+by$ and so $d|1$. Accordingly, $d\in
U(A)$. Therefore, since $d$ is any element of $\mathcal{D}(a,b)$, we
have $\mathcal{D}(a,b)\subseteq U(A)$ meaning that 
$\mathcal{D}(a,b)=U(A)$. It completes the proof that
$\mathcal{D}(a,b)=U(A)$ if, and only if, $aA+bA=A$.
\end{proof}

\begin{lemma}[Euclid's lemma]\label{l7.2}
Let $a,b,c$ be three elements of $A$. If
$\gcd(a,b)=1$ and if $a$ divides $bc$, then $a$ divides $c$. 
\end{lemma}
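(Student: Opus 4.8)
The plan is to reduce the statement to the Bezout identity already secured for this principal setting. Since the standing assumption of the section is that $A$ is a principal entire ring, and since we are given $\gcd(a,b)=1$, the remark (\ref{r6.6}) (equivalently the corollary (\ref{c6.5}) applied to the already-existing $\gcd(a,b)$) furnishes two elements $x,y\in A$ with $ax+by=1$. This is the one genuinely nontrivial input; everything that follows is a short manipulation.

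First I would multiply this Bezout relation by $c$, obtaining $c=acx+bcy=a(cx)+(bc)y$. The point of arranging it this way is to exhibit $c$ as an $A$-linear combination of two elements that are each visibly divisible by $a$.

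Next I would verify the two divisibilities. On one hand $a|a(cx)$ holds trivially, by the definition (\ref{d3.1}) of divisibility with witness $cx$. On the other hand the hypothesis $a|bc$ gives, after multiplying the witnessing equality $ak=bc$ by $y$, that $a|(bc)y$. Then I would invoke the linearity of the divisibility relation on $A$ (stated in the footnote to the remark (\ref{r3.3})): from $a|a(cx)$ and $a|(bc)y$ it follows that $a$ divides any $A$-combination of the two, and in particular $a|\bigl(a(cx)+(bc)y\bigr)=c$, which is exactly the assertion.

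The only place where care is genuinely required is the very first step, so that is where I would expect the main (indeed the sole) obstacle to lie: the Bezout relation $ax+by=1$ is available precisely because $A$ is principal, so that $aA+bA$ is itself a principal ideal generated by $\gcd(a,b)=1$ and hence equals $A$ by the lemma (\ref{l2.2}). Dropping principality would break this input and the argument would not go through. Once $ax+by=1$ is in hand, no further difficulty remains, the linearity of $|$ doing all the remaining work.
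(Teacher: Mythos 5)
Your proposal is correct and follows essentially the same route as the paper's own proof: obtain the Bezout relation $ax+by=1$ from the $\gcd$ hypothesis (the paper's remark (\ref{r6.6}) / corollary (\ref{c6.5})), multiply by $c$ to write $c=a(cx)+(bc)y$, and conclude by linearity of divisibility using $a|bc$. Your added remark that principality of $A$ is what secures the Bezout input is accurate and consistent with the paper's framework, but introduces no difference in the argument itself.
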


\begin{proof}
If $\gcd(a,b)=1$, we know that there
exist two elements $x,y$ of $A$ such that $ax+by=1$. So, for $c\in A$,
we have $c=axc+byc$ which gives
$c=acx+bcy$. The element $a$ divides $acx$. Besides, we assume
that the element $a$ of $A$ divides $bc$ and so $a$ divides
$bcy$. Accordingly, by linearity of the relation of divisibility, $a|c$.
\end{proof}

\begin{remark}\label{r7.3}
If $\gcd(a,b)=1$ and if $a|bc$ with $c\in U(A)$, then from the
generalization of the Euclid's lemma (\ref{l7.2}), $a|c$ and so $a$ should
belong to $U(A)$. In such 
a case, we have also $a|b$ since $a\in U(A)$. 
\end{remark}

\section{{Some arithmetic properties on the set of 
ideals of a principal entire ring}}\label{s8}

\begin{property}\label{p8.1}
If $a,b$ are two elements of $A$ such that $\gcd(a,b)=1$, then the common  
multiples of $a,b$ are multiples of $ab$. In particular, provided
$\gcd(a,b)=1$ and provided $\lcm(a,b)$ exists, we have: 
$$
\lcm(a,b)A=abA
$$
\end{property}

\begin{proof}
We assume that $\lcm(a,b)$ exists.
\\[0.1in]
It is obvious that the multiples of $ab$ are also common multiples of
$a,b$. Then, $abA\subseteq aA\cap bA$ or equivalently
$abA\subseteq\lcm(a,b)A$. 
\\[0.1in]
Reciprocally, let assume that $m\in aA\cap bA$. Then, we have $m=bc$
with $c\in A$. As $a|m$ (namely, $a|bc$) and $\gcd(a,b)=1$,
from the Euclid's lemma (\ref{l7.2}), $a|c$. So,
there exists an element $x$ of $A$ such that $c=ax$. Therefore,
$m=abx$ with $x\in A$. Accordingly, $ab|m$. Since $m$ is any element
of the subset   
$aA\cap bA$ of $A$, it implies that 
$aA\cap bA\subseteq abA$ or equivalently $\lcm(a,b)A\subseteq abA$.
\\[0.1in]
Therefore, we conclude that $\lcm(a,b)A=abA$.
\end{proof}

\begin{corollary}\label{c8.2}
Let $n\in\mathbb{N}^{\star}$. 
If elements $a_1,\ldots,a_n$ of $A$ whose $\gcd$ is equal to $1$, divides 
an element $m$ of $A$, then their product $a_1\ldots a_n$ divides $m$. 
\end{corollary}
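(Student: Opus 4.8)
The plan is to read the hypothesis as \emph{pairwise} coprimality, namely $\gcd(a_i,a_j)=1$ for all $i\neq j$ in $\llbracket 1,n\rrbracket$, and to argue by induction on $n$. The pairwise reading is forced: if one only assumed the single condition $\mathcal{D}(a_1,\ldots,a_n)=U(A)$, the conclusion would fail already in $\mathbb{Z}$, where $a_1=6$, $a_2=10$, $a_3=15$ have overall $\gcd$ equal to $1$ and each divides $m=30$, yet $a_1a_2a_3=900$ does not divide $30$. So pairwise coprimality is the natural and necessary hypothesis, and it is the one under which Property~\ref{p8.1} (the case $n=2$) generalizes. The base case $n=1$ is trivial, and $n=2$ is exactly the reciprocal content of Property~\ref{p8.1}: two coprime elements both dividing $m$ have their product dividing $m$.

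For the inductive step, suppose the claim holds for $n-1$ pairwise coprime elements. Given $a_1,\ldots,a_n$ pairwise coprime with each $a_i\mid m$, the induction hypothesis applied to $a_1,\ldots,a_{n-1}$ yields $a_1\cdots a_{n-1}\mid m$. To combine this with $a_n\mid m$ through Property~\ref{p8.1}, I must first verify that $a_1\cdots a_{n-1}$ and $a_n$ are coprime. The crux is therefore the sublemma: if $\gcd(b,a)=1$ and $\gcd(c,a)=1$, then $\gcd(bc,a)=1$.

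I would prove this sublemma with Bezout. By Corollary~\ref{c6.5} there exist $x_1,y_1,x_2,y_2\in A$ with $bx_1+ay_1=1$ and $cx_2+ay_2=1$. Multiplying these two identities and collecting the terms carrying the factor $a$ gives an expression of the form $bc\,X+a\,Y=1$ with $X=x_1x_2$ and $Y=bx_1y_2+cx_2y_1+ay_1y_2$ in $A$. Hence $1\in aA+bcA$, so $aA+bcA=A$, and by Theorem~\ref{t7.1} this means $\mathcal{D}(bc,a)=U(A)$, that is $\gcd(bc,a)=1$. Iterating this two-factor sublemma over $i=1,\ldots,n-1$ promotes the pairwise hypotheses $\gcd(a_i,a_n)=1$ into the single relation $\gcd(a_1\cdots a_{n-1},a_n)=1$.

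With the sublemma in hand the induction closes: $a_1\cdots a_{n-1}$ and $a_n$ are coprime and both divide $m$, so Property~\ref{p8.1} gives $(a_1\cdots a_{n-1})a_n=a_1\cdots a_n\mid m$. The main obstacle is the coprimality-of-products sublemma; everything else is bookkeeping for the two nested inductions. One caveat to respect is that Property~\ref{p8.1} as stated also asks for the existence of $\lcm(a,b)$, but the divisibility conclusion $ab\mid m$ used here is precisely the reciprocal inclusion $aA\cap bA\subseteq abA$, whose justification inside Property~\ref{p8.1} rests only on coprimality and the Euclid lemma (Lemma~\ref{l7.2}), not on the existence of the $\lcm$; so the corollary may be invoked without assuming any $\lcm$ exists.
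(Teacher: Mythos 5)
Your proof is correct, and it is worth noting that the paper supplies no proof of Corollary~\ref{c8.2} at all: the statement appears bare, immediately after Property~\ref{p8.1}, with the induction left entirely implicit. Your argument is precisely the completion that this placement suggests, and the three points you make explicit are exactly the ones the paper glosses over. First, the hypothesis must indeed be read as pairwise coprimality; your counterexample $a_1=6$, $a_2=10$, $a_3=15$, $m=30$ in $\mathbb{Z}$ (which is an instance of the paper's own framework, by Example~\ref{e4.5}) shows the collective-gcd reading is false, and the paper's Definition~\ref{d5.1} only defines $\gcd$ for pairs anyway. Second, the induction cannot close without the sublemma that $\gcd(b,a)=\gcd(c,a)=1$ implies $\gcd(bc,a)=1$; your Bezout derivation of it is valid entirely within the paper's toolkit --- Remark~\ref{r6.6} gives the two identities, multiplying them gives $bc(x_1x_2)+a(bx_1y_2+cx_2y_1+ay_1y_2)=1$, Lemma~\ref{l2.2} turns $1\in bcA+aA$ into $bcA+aA=A$, and Theorem~\ref{t7.1} with the convention of Definition~\ref{d5.1} converts this into $\gcd(bc,a)=1$. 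Third, your caveat about Property~\ref{p8.1} is accurate and worth keeping: its statement and opening line assume $\lcm(a,b)$ exists, but the inclusion $aA\cap bA\subseteq abA$ --- the only content you invoke --- is established there from coprimality and Euclid's lemma (Lemma~\ref{l7.2}) alone, so the corollary indeed holds without any $\lcm$ hypothesis, which matters since the paper's $\lcm$ is not guaranteed to exist in this setting.
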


\begin{corollary}\label{c8.3}
Let $a,b$ be two non-zero elements of $A$ such that their $\gcd$ exists,
$a=ga'$, $b=gb'$ with $g=\gcd(a,b)$ which is multiplicatively
regular. We assume that $\lcm(a,b)$ exists. Then, the common  
multiples of $a,b$ are multiples of 
$ga'b'$. In particular, we have:
$$
\lcm(a,b)\gcd(a,b)A=abA
$$
\end{corollary}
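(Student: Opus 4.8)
The plan is to divide out the greatest common divisor so as to reduce to a coprime situation, in the spirit of the property (\ref{p8.1}); the decisive preliminary fact will be that $a'$ and $b'$ are relatively prime.

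First I would show that $\gcd(a',b')=1$. Since $g=\gcd(a,b)$, the definition (\ref{d5.1}) together with the property (\ref{p.1}) of the definition (\ref{d4.3}) and the property (\ref{p4.18}) ensure that $g$ behaves as a greatest common divisor, in particular that every common divisor of $a$ and $b$ divides $g$. Now let $d$ be any common divisor of $a'$ and $b'$, say $a'=dc_1$ and $b'=dc_2$ with $c_1,c_2\in A$; then $a=ga'=gdc_1$ and $b=gb'=gdc_2$, so $gd$ is a common divisor of $a$ and $b$ and hence $gd\mid g$. Writing $g=gde$ for some $e\in A$ and using that $g$ is multiplicatively regular, I obtain $de=1$, so $d\in U(A)$. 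Thus $\mathcal{D}(a',b')=U(A)$, which by the definition (\ref{d5.1}) means $\gcd(a',b')=1$. I expect this to be the main obstacle, as it is the only place where the structure of $g$ (transferred to the size $N$ of a divisibility-gcd) and the regularity hypothesis on $g$ are genuinely used; the remaining steps are formal.

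Next I would establish the first assertion. Let $m$ be a common multiple of $a$ and $b$, so $m=as=bt$ for some $s,t\in A$, that is $ga's=gb't$; since $g$ is regular this gives $a's=b't$. Then $a'\mid b't$ while $\gcd(a',b')=1$, so Euclid's lemma (\ref{l7.2}) yields $a'\mid t$, say $t=a'w$ with $w\in A$. Hence $m=bt=gb'a'w=(ga'b')w$, which shows $ga'b'\mid m$; that is, every common multiple of $a$ and $b$ is a multiple of $ga'b'$.

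Finally, for the ideal identity I would observe that $ga'b'=ab'=ba'$ is itself a common multiple of $a$ and $b$, so together with the previous paragraph this gives $aA\cap bA=ga'b'A$. By the theorem (\ref{t6.1}) we have $aA\cap bA=\lcm(a,b)A$, whence $\lcm(a,b)A=ga'b'A$ and, by the property (\ref{p4.18}), $\lcm(a,b)=ga'b'\,u$ for some unit $u\in U(A)$. Since $ab=(ga')(gb')=g^2a'b'$, it follows that $\lcm(a,b)\gcd(a,b)=g\cdot ga'b'u=g^2a'b'u=abu$, and as $u$ is a unit $abuA=abA$. Therefore $\lcm(a,b)\gcd(a,b)A=abA$, which is the desired conclusion.
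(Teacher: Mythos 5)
Your proof is correct, and its overall skeleton coincides with the paper's: establish $\gcd(a',b')=1$, show that every common multiple of $a,b$ is a multiple of $ga'b'$, and then translate this into the ideal identity. The genuine divergence is in the coprimality step, which is also where both you and the paper spend the hypothesis that $g$ is regular. The paper argues through Bezout: by the corollary (\ref{c6.5}) it writes $g=ax+by$, cancels $g$ to get $1=a'x+b'y$, concludes $a'A+b'A=A$ by the lemma (\ref{l2.2}), and then invokes the generalized Bezout theorem (\ref{t7.1}) to obtain $\mathcal{D}(a',b')=U(A)$. You instead argue directly from the universal property of the gcd: any common divisor $d$ of $a',b'$ lifts to the common divisor $gd$ of $a,b$, hence $gd\mid g$, and cancelling $g$ forces $d\in U(A)$. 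Your route is more elementary, since it avoids the Bezout machinery (and hence principality of $A$) at this step, using only the definition (\ref{d5.1}), the unit-ambiguity facts (\ref{p.1}) and (\ref{p4.18}), and regularity of $g$; the paper's route leans on the principal-ring structure it has already built. Of course both proofs still need principality eventually, through Euclid's lemma (\ref{l7.2}) and the theorem (\ref{t6.1}). In the second step you apply Euclid's lemma directly to $a's=b't$, whereas the paper factors $m=gm'$ and quotes the property (\ref{p8.1}); since that property is itself proved by Euclid's lemma, this difference is only cosmetic. Your final bookkeeping with the unit $u$ relating $\lcm(a,b)$ and $ga'b'$ is, if anything, more careful than the paper's own conclusion.
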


\begin{proof}
We assume that $\gcd(a,b)$ exists and is regular. We assume also that
$\lcm(a,b)$ exists.
\\[0.1in]
Notice that $\gcd(a',b')=1$. Indeed, denoting $\gcd(a,b)=g$, there exist
two elements $x,y$ of $A$ such that $g=ax+by$. Since $a=ga'$ and $b=gb'$, it
comes that $g=ga'x+gb'y$. Since $g$ is multiplicatively regular, after
simplification, we get $1=a'x+b'y$. So, the ideal $a'A+b'A$ contains
$1$ meaning that $a'A+b'A=A$ (see the lemma (\ref{l2.2})). From the
generalization of the Bezout theorem (\ref{t7.1}), we have 
$\mathcal{D}(a',b')=U(A)$. It means that $\gcd(a',b')=1$.
\\[0.1in]
It is obvious that $ga'b'=ab'=a'b$ is a multiple of $a,b$. Then
$ga'b'A\subseteq aA\cap bA$ and so $ga'b'A\subseteq\lcm(a,b)A$. 
\\[0.1in]
Reciprocally, let $m$ be a common multiple of $a,b$. Then by
transitivity of the relation of divisibility, $m$ is a
multiple of $g$. So, we have $m=gm'$, $m=ac$ and $m=bd$
with $m',c,d\in A$. Since $a=ga'$ and $b=gb'$, it gives $gm'=ga'c$ and 
$gm'=gb'd$. Whence since $g$ is multiplicatively regular, we obtain
$m'=a'c$ and $m'=b'd$ with $c,d\in A$. We deduce that $a'|m'$ and $b'|m'$. 
Since $a'|m'$, $b'|m'$ and since $\gcd(a',b')=1$, 
it results that $a'b'|m'$. Therefore, $ga'b'|gm'$ and so
$ga'b'|m$. Since $m$ is any multiple
of $a,b$, we deduce that $aA\cap bA\subseteq ga'b'A$ and so 
$\lcm(a,b)A\subseteq ga'b'A$.
\\[0.1in]
Therefore, we conclude that $\lcm(a,b)A=ga'b'A$ or
equivalently $\lcm(a,b)=ab'A=a'bA$. It results that
$\lcm(a,b)\gcd(a,b)A=abA$.
\end{proof}

\section{{Maximal ideals in a principal entire ring}}\label{s9}

A prime ideal\cite{sg} in $A$\index{prime ideal} is an ideal
$\mathfrak{p}\neq A$ such 
that $A/\mathfrak{p}$ is entire. Equivalently, we could say that it is 
an ideal $\mathfrak{p}\neq A$ such that, whenever $x,y\in A$ and
$xy\in\mathfrak{p}$, then $x\in\mathfrak{p}$ or $y\in\mathfrak{p}$. 

\begin{property}\label{p9.1}
Let $z$ be an element of $A$ such that $N(z)=z$.
\\[0.1in]
$A/zA$ is entire if, and only if, $z$ is prime or null.
\end{property}

\begin{proof}
If $z=0$, then $A/zA$ is equal to $A$ which is entire. In the
following, we assume that $z$ is not null.
\\[0.1in]
Let assume that $z=N(z)\neq 0$ is a prime element of $A$ (see above for a
definition). Then, $zA$ is an ideal of $A$ which is not equal to $A$
since $z$ is not invertible. Let $x,y$ be two elements of $A$ such 
that $z|xy$. Then either $z$ divides both $x,y$ 
or else $z$ and one of the elements $x,y$ say $x$ are
relatively primes. In the second case, from the Euclid's lemma, it follows
that $z$ divides the other element namely $y$ among the elements $x,y$
of $A$. To sum up, if $z$ is a prime element of $A$ and if $z|xy$ with
$x,y\in A$, then $z|x$ or $z|y$. It is equivalent to say that $zA\neq A$ is
an ideal such that whenever $x,y\in A$ and
$xy\in zA$, then $x\in zA$ or $y\in zA$. From the definition of a
prime ideal, it results that $A/zA$ is entire.
\\[0.1in]
Afterwards, let assume that $z\neq 0$ is not a prime element of $A$. If
$z\in U(A)$, then $A/zA$ is reduced to one residue class of elements of
$A$ which is equal to the zero class. In this case, $A/zA$ is clearly
not entire. Let assume that $z\not\in U(A)$. Since $z$ is not prime
and $z\not\in U(A)$, $\mathcal{D}(z)$ is not reduced to $U(A)$. So,
there exist two elements $x,y\in A$ such that $z=xy$ (with at least one
of the elements $x,y$, which does not belong to $U(A)$). If one of the
elements $x,y$ belongs to $U(A)$, then $\mathcal{D}(z)=U(A)\cup
zU(A)$. Since $z\not\in U(A)$ and $z\neq 0$, from the definition of a
prime element of $A$, it would mean that $z$ is prime. What it is
impossible. So, we have necessarily $z=xy$ with $x,y\in A$ such that
$x,y\not\in U(A)$. Then, the residue class
$\hat{x},\hat{y}$ of $x,y$ in $A/zA$ 
are non-zero and their product is zero. It means that $A/zA$ has
divisors of zero and so $A/zA$ is not
entire. Thus, we proved that if $z$ is not a prime element of $A$,
then $A/zA$ is not entire. It is equivalent to say that if $A/zA$ is
entire, then $N(z)$ is prime.
\\[0.1in]
We conclude that $A/zA$ is entire if, and only if, $z$ is prime or
null.
\end{proof}

\begin{property}\label{p9.2}
Let $z$ be an element of $A$.
\\[0.1in]
If $z$ is prime, then $zA$ is maximal.
\end{property}

\begin{proof}
Let $z$ be a prime element of $A$. Let assume that there exists $w\in
A$ such that $zA\subseteq wA$. Then $w|z$. Since $z$ is prime (see the
definition of a prime element of $A$), then there exists $u\in U(A)$
such that $z=wu$. It results that $zA=wuA=wA$.
\end{proof}

\begin{corollary}\label{c9.3}
Let $z$ be an element of $A$ such that $N(z)=z$.
\\[0.1in]
$A/zA$ is a field if, and only if, $z$ is prime.
\end{corollary}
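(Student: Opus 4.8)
The plan is to reduce the corollary to the standard dictionary between field quotients and maximal ideals, and then feed in the two Properties \ref{p9.1} and \ref{p9.2} that immediately precede it. Recall that for a commutative ring with unit the quotient $A/I$ is a field precisely when $I$ is a maximal ideal; since both halves of that equivalence are elementary, I would either cite it or reprove the needed direction directly so the corollary stays self-contained.

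For the \emph{if} direction, assume $z$ is prime. Property \ref{p9.2} already gives that $zA$ is a maximal ideal, so $A/zA$ is a field by the quotient criterion. If a direct argument is preferred, I would take an arbitrary nonzero class $\hat{x}\in A/zA$, that is an $x\in A$ with $z\nmid x$. Because $z$ is prime, its divisors are exactly $\mathcal{D}(z)=U(A)\cup zU(A)$ (Definition \ref{d5.1}), and since $z\nmid x$ no associate of $z$ divides $x$, so the common divisors of $x$ and $z$ reduce to $U(A)$ and $\gcd(x,z)=1$. Corollary \ref{c6.5} (Bezout) then furnishes $u,v\in A$ with $xu+zv=1$, and reducing modulo $zA$ yields $\hat{x}\hat{u}=\hat{1}$; every nonzero class is invertible, so $A/zA$ is a field.

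For the \emph{only if} direction, assume $A/zA$ is a field. Every field is an integral domain, hence $A/zA$ is entire, and since $N(z)=z$ by hypothesis, Property \ref{p9.1} applies and forces $z$ to be prime or null. It then remains to discard $z=0$: there $A/zA\cong A$, which is a field only when $A$ is itself a field, the degenerate situation excluded throughout this arithmetic (indeed $A$ carries a prime element and therefore a nonzero non-unit). Hence $z\neq 0$ and $z$ is prime.

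The point I would watch most carefully is this handling of $z=0$ in the reverse implication, together with the passage $z\nmid x\Rightarrow\gcd(x,z)=1$ in the forward direction. The latter must be justified strictly from the defining relation $\mathcal{D}(z)=U(A)\cup zU(A)$ of a prime element and the convention that $\gcd=1$ once the common divisors collapse to $U(A)$ (Definition \ref{d5.1}); everything else is the routine field/maximal-ideal bookkeeping and the fact that a field is entire.
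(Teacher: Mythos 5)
Your proof is correct, and the two directions deserve separate verdicts. The forward direction is exactly the paper's: Property \ref{p9.2} plus the standard equivalence between maximal ideals and field quotients (the paper cites Lang, p.~93). The reverse direction is where you genuinely diverge. The paper never invokes Property \ref{p9.1} there; instead it reproves primality from scratch: it takes an arbitrary nonzero class $\hat{x}$ invertible in $A/zA$, writes $1=xy+zw$, deduces $xA+zA=A$, applies the generalized Bezout theorem (Theorem \ref{t7.1}) to get $\mathcal{D}(z,x)=U(A)$, and assembles from this that $\mathcal{D}(z)=U(A)\cup zU(A)$, i.e.\ that $z$ is prime. You instead observe that a field is entire and let Property \ref{p9.1} deliver ``$z$ prime or null'' in one stroke, leaving only the exclusion of $z=0$; note this also disposes of the case $z\in U(A)$ automatically, since $A/A=\{\hat{0}\}$ is not entire. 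Your route is shorter, reuses the machinery the section just built, and is arguably what a corollary of Property \ref{p9.1} ought to look like; the paper's route is self-contained at the level of divisor sets but duplicates work. The one shared weak point is precisely the one you flagged: discarding $z=0$ requires $A$ itself not to be a field, which the paper waves away with ``$A$ is not a field by assumption'' --- an assumption never stated explicitly --- while you appeal to the existence of a prime element, a hypothesis of Section \ref{s5}'s results rather than a blanket one; you are no worse off than the paper there. Your optional direct argument for the forward direction is also sound, provided you add that $zA\neq A$ (Lemma \ref{l2.2}, since a prime is not a unit), so that $\hat{1}\neq\hat{0}$ and the quotient is a genuine field.
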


\begin{proof}
Let $z$ be an element of $A$ such that $N(z)=z$.
\\[0.1in]
We know that the ideal $zA$ is maximal if, and only if, $A/zA$ is a
field (see p. 93 of \cite{sg}). So, if $z$
is prime, then from the property above, the ideal $zA$ is maximal. It
results that if $z$ is prime, then $A/zA$ is a field.
\\[0.1in]
Reciprocally, if $A/zA$ is a field with $z=N(z)$, then $z\neq 0$ and
$z\not\in U(A)$ ($A$ is not a field by assumption and
$A/A=\{\hat{0}\}$ is not also a field). Let consider a non-zero
element $x$ whose residue class 
$\hat{x}$ in $A/zA$ is invertible in $A/zA$. Notice that $x\neq zq$ with
$q\in A$ and in particular $x\neq zu$ with $u\in U(A)$. Since $A/zA$ is a field,
there exists an element $y\in A$ such that
$\hat{x}\,\hat{y}=\hat{1}$. It means that $1=xy+zw$ with $w\in
A$. Then, it comes that for any $a\in A$, we have
$a=xya+zwa$. Accordingly, since $a$ is arbitrary in $A$, 
it results that $A\subseteq xA+zA$ which gives $xA+zA=A$. Or, from the
generalization of the Bezout theorem (\ref{t7.1}), 
$xA+zA=A$ is equivalent to $\mathcal{D}(z,x)=U(A)$. Since $x$ is any non-zero
element of $A$ whose residue class
$\hat{x}$ in $A/zA$ is invertible in $A/zA$ (namely $x\neq zq$ with
$q\in A$ and in particular $x\neq zu$ with $u\in U(A)$), only elements
of $U(A)$ and elements of $zU(A)$ in $A$, divide $z$. So, we get
$\mathcal{D}(z)=U(A)\cup zU(A)$. Since $z\neq 0$ and $z\not\in U(A)$
if $A/zA$ is a field with $z=N(z)$, we deduce that if $A/zA$ is a
field, then $z$ is prime. 
\\[0.1in]
We conclude that when $z=N(z)$, $A/zA$ is a field if, and only if,
$z$ is prime.
\end{proof}

\section{{Examples of ring extensions of entire rings}}\label{s10}

In this section, the element $i$ which verifies the polynomial
equation $z^2+1=0$ in $\mathbb{C}$, is also written in its exponential
form as:
$$
i=e^{\frac{i\pi}{2}}
$$
Thus, the equation $i^2+1=0$ can be rewritten as:
$$
e^{i\pi}+1=0
$$
Since $i^{\star}=-i$ (see the remark (\ref{r4.41})), using the
definition (\ref{d4.25}), we have necessarily $i=i'$ (recall that $i$
is defined as a unit in $A[S]$ (see the property (\ref{p4.40})) and so
is invertible). From the remark
(\ref{r4.35}), using again the remark (\ref{r4.41}), it follows that:
$$
i^{\star}=i^{-1}N(i)^2=i^{-1}
$$ 
Therefore:
$$
(e^{\frac{i\pi}{2}})^{\star}=(e^{\frac{i\pi}{2}})^{-1}
$$
$$
(e^{\frac{i\pi}{2}})^{\star}=e^{-\frac{i\pi}{2}}
$$
Since $i^{\star}=-i$, it gives:
$$
(e^{\frac{i\pi}{2}})^{\star}=e^{-\frac{i\pi}{2}}=-i
$$
In the following, for all $n\in\mathbb{N}^{\star}$, we set:
$$
e_{n,0}=e_{n,n}=-i^2=-e^{i\pi}=1
$$
It comes that:
$$
e^2_{n,0}=e^2_{n,n}=i^4=e^{2i\pi}=1
$$
Let $n\in\mathbb{N}^{\star}$ and let $U_n$ the subset of the $n$th-root
of unity in $\mathbb{C}$:
$$
U_n=\{e_{n,k}\,:\,e^n_{n,k}=e_{n,0},\,k=0,1,\ldots,n-1\}
$$
with ($k=0,1,\ldots,n-1$ and $n\in\mathbb{N}^{\star}$):
$$
e_{n,k}=(-e_{n,0})^{\frac{2k}{n}}=(e^{i\pi})^{\frac{2k}{n}}=e^{\frac{2ik\pi}{n}}
$$
and for $k,l\in\{0,1,\ldots,n-1\}$:
$$
e_{n,k}=e_{n,l}\,\Leftrightarrow\,k=l
$$
Notice that the formula of $e_{n,k}$ works also for $k=n$:
$$
e_{n,n}=e^{2i\pi}=1=e_{n,0}
$$
Notice also that the square root function defined on
$\mathbb{C}$ is the function which associates at least one complex
number $w$ to any complex number $z$ such that $w^2=z$. It   
maps $U_n$ onto $U_{2n}\cup e^{i\pi}U_{2n}$ with $n\in\mathbb{N}^{\star}$
since if ($k=0,1,\ldots,n-1$ with $n\in\mathbb{N}^{\star}$):
$$
w^2=e^{\frac{2ik\pi}{n}}=e^{\frac{2ik\pi}{n}}
$$
then ($k=0,1,\ldots,n-1$ with $n\in\mathbb{N}^{\star}$):
$$
w=\pm e^{\frac{2ik\pi}{2n}}=\pm e_{2n,k}
$$
The ambiguity of sign comes from the fact that we can write
$z=ze^{2i\pi}$. So, we can replace $z$ by $ze^{2i\pi}$ in the equality
$w^2=z$. It may change $w$ into $-w$. In order to have a single value, we cut
the complex plane where the square root function is multi-valued. The
set of points (images of complex numbers in the complex plane) where
the square root function keeps a constant sign is called a branch (or a sheet).
The result is to make the square root function
uniform (within a branch). The corresponding value of the square root
function at a complex number $z$ in the cut complex plane is the chosen
determination of the square root of a complex number $z$. When the argument of
$z$ (namely the angle coordinate of $z$ when $z$ is described by its polar
coordinates in the complex plane) belongs to the interval
$]-\pi;\pi]$, the chosen determination of 
the square root of a complex number $z$ is called
its principal square root denoted $\sqrt{z}=z^{\frac{1}{2}}$. It is usual
to take the branch cut in the complex plane as the non-positive
part of the real axis in the complex plane. It stems from the fact
that we can associate two purely imaginary complex
numbers which are the 
solutions of a polynomial equation as $z^2+x=0$ of unknown $z$ in
$\mathbb{C}$, to 
any strictly negative real number $-x$ with $x>0$. Each time we go through
the branch cut, the square root function takes a multiplicative global
sign $-1$. So, the square root function has a discontinuity near the
branch cut.
\\[0.1in]
For instance, the principal square root of
$e_{2,1}=e^{i\pi}=-e_{2,0}=-1$ is given by:
$$
i=e_{4,1}=\sqrt{e_{2,1}}=\sqrt{-e_{2,0}}=e_{2,0}\sqrt{-1}
$$
where we used the fact that $\sqrt{e_{n,0}}=e_{n,0}=1$ for all
$n\in\mathbb{N}^{\star}$.
\\[0.1in]
More generally, the principal square root of $e_{n,k}$ for
$0\leq k\leq \frac{n}{2}$ with $n\in\mathbb{N}^{\star}$ is given by:
$$
\sqrt{e_{n,k}}=e_{2n,k}
$$
Therefore, the principal square root function defined on the cut
complex plane, maps $U_n$ onto
$U_{2n}$ with $n\in\mathbb{N}^{\star}$.
\\[0.1in]
Otherwise, since ($k=0,1,\ldots,n-1$):
$$
e^{2i\pi}=1\,\Rightarrow\,(e^{2i\pi})^k=e^{2ik\pi}=1
$$
we can observe that ($k=0,1,\ldots,n-1$):
$$
(e^{\frac{2ik\pi}{n}})^{n-1}=e^{\frac{2ik(n-1)\pi}{n}}
=e^{-\frac{2ik\pi}{n}}=e^{\frac{2i(n-k)\pi}{n}}
$$
So ($k=0,1,\ldots,n-1$):
$$
e^n_{n,k}=e_{n,0}\,\Leftrightarrow\,e_{n,k}e^{n-1}_{n,k}=e_{n,0}
\Leftrightarrow\,e_{n,k}e_{n,n-k}=e_{n,0}
$$
and ($k=0,1,\ldots,n-1$):
$$
e^n_{n,k}=e_{n,0}\,\Leftrightarrow\,e^{n-1}_{n,k}e_{n,k}=e_{n,0}
\Leftrightarrow\,e_{n,n-k}e_{n,k}=e_{n,0}
$$
Therefore ($k=0,1,\ldots,n-1$):
$$
e_{n,k}e_{n,n-k}=e_{n,n-k}e_{n,k}=e_{n,0}
$$
It results that each $e_{n,k}$ with $n\in\mathbb{N}^{\star}$ and
$k=0,1,\ldots,n-1$ is invertible. Consequently, $N(e_{n,k})\neq 0$ for
all $k\in\{0,1\ldots,n-1\}$ with $n\in\mathbb{N}^{\star}$. Regarding
the Euclid division
of the product $km$ of two integers $k,m\in\{0,1,\ldots,n-1\}$ by $n$:
$$
km=nq+r\,\,\,\,\mathrm{with}\,\,0\leq r<n
$$
where $q=\lfloor\frac{km}{n}\rfloor$, we can remark that
($k,m=0,1,\ldots,n-1$ and $r$ the remainder of the Euclid division of
$km$ by $n$):
$$
(e^{\frac{2ik\pi}{n}})^m=e^{\frac{2ikm\pi}{n}}=e^{2iq\pi+\frac{2ir\pi}{n}}
=e^{\frac{2ir\pi}{n}}
$$
and so we have ($k,m=0,1,\ldots,n-1$ and $r$ the remainder of the
Euclid division of $km$ by $n$):
$$
e^m_{n,k}=e_{n,r}
$$
Or ($k=0,1,\ldots,n-1$):
$$
N(e^n_{n,k})=N(e_{n,0})=N(1)=1
$$
Using the property (\ref{p4.16}), it comes that:
$$
N(e_{n,k})^n=1
$$
$$
N(e_{n,k})^n-1=0
$$
$$
(N(e_{n,k})-1)(1+\ldots+N(e_{n,k})^{n-1})=0
$$
Since in the sum $1+\ldots+N(e_{n,k})^{n-1}$, each term can be written
as $N(e_{n,r})\neq 0$ with $0\leq r<n$, it implies that ($k=0,1,\ldots,n-1$):
$$
N(e_{n,k})=1
$$
It results also that ($k=0,1,\ldots,n-1$):
$$
u(e_{n,k})=e_{n,k}
$$
Since $e_{n,k}$ is invertible, using the defintion (\ref{d4.25}), we
have ($k=0,1,\ldots,n-1$):
$$
(e_{n,k})^{\star}=e'_{n,k}=e^{-1}_{n,k}N(e_{n,k})=e^{-1}_{n,k}=e_{n,n-k}
$$ 
or equivalently ($k=0,1,\ldots,n-1$):
$$
(e^{\frac{2ik\pi}{n}})^{\star}=(e^{\frac{2ik\pi}{n}})^{-1}
=e^{-\frac{2ik\pi}{n}}=e^{\frac{2i(n-k)\pi}{n}}
$$
Thus, we have ($k=0,1,\ldots,n-1$):
$$
N(e_{n,k})^2=e_{n,k}(e_{n,k})^{\star}=(e_{n,k})^{\star}e_{n,k}=e_{n,0}=1
$$
Besides, since $n-k\in\llbracket 1,n-1\rrbracket$ for
$k=1,\ldots,n-1$ and since $e^{\star}_{n,0}=e_{n,0}$, 
$e^{\star}_{n,k}\in U_n$ for all $k\in\llbracket 0,n-1\rrbracket$ and
the star operation which is well 
defined on $U_n$, maps $U_n$ onto $U_n$.
\\[0.1in]
It is straightforward to verify that $U_n$ is a multiplicative cyclic group of
order $n$. In particular, a generator of $U_n$ is given by:
$$
e_{n,1}=e^{\frac{2i\pi}{n}}
$$
and we have ($k=0,1,\ldots,n-1$):
$$
e_{n,k}=e^k_{n,1}
$$
Notice that if $n$ is even (but not zero), then $n/2$ is a non-zero
natural number and we have ($n\in 2\mathbb{N}^{\star}$):
$$
e_{n,n/2}=-1=-e_{n,0}
$$ 
For $n$ odd, it is impossible that an element among the elements
$e_{n,k}$ s with $k\in\llbracket 0,n-1\rrbracket$, of $U_n$, be equal to $-1$.
\\[0.1in]
Moreover, we can define elements $e_{n,m}$ for all
$m\in\mathbb{Z}$. Indeed, regarding the Euclid division of
$\mathrm{abs}(m)$ by $n$: 
$$
\mathrm{abs}(m)=an+k\,\,\,\,\mathrm{with}\,\,0\leq k<n
$$
with $a=\lfloor\frac{\mathrm{abs}(m)}{n}\rfloor$, we have:
$$
e_{n,\mathrm{abs}(m)}=e^{\frac{2im\pi}{n}}=e^{2ia\pi+\frac{2ik\pi}{n}}=e^{\frac{2ik\pi}{n}}=e_{n,k}
$$
and we have:
$$
e_{n,-\mathrm{abs}(m)}=e^{-1}_{n,\mathrm{abs}(m)}=e^{-1}_{n,k}=e_{n,n-k}
$$
In particular, for $m,l\in\mathbb{Z}$:
$$
e_{n,\mathrm{abs}(m)}=e_{n,\mathrm{abs}(l)}\,\Leftrightarrow\,
\mathrm{abs}(m)=\mathrm{abs}(l)+qn\,\,\,\,\mathrm{with}\,\,q\in\mathbb{Z}
$$
Or, we have ($k=0,1,\ldots,n-1$ and $n\in\mathbb{N}^{\star}$):
$$
e_{n,k}=(e_{n,k})^{\star}\,\Leftrightarrow\,
e^{\frac{2ik\pi}{n}}=e^{-\frac{2ik\pi}{n}}
\,\Leftrightarrow\,e^{\frac{4ik\pi}{n}}=1\,
\Leftrightarrow\,e_{n,2k}=e_{n,0}\,\,\,\,\mathrm{or}\,\,\,\,e_{n,2k}=e_{n,n}
$$  
So, since $2k\in\{0,2,\ldots,2n-2\}$ for $k\in\{0,1,\ldots,n-1\}$ with
$n\in\mathbb{N}^{\star}$, 
we have either $2k=0$ which gives $k=0$ or $2k=n$ which gives $k=n/2$.
The case $k=n/2$ is only possible if $n$ is even
(but not zero). So, provided $n$ is even, the set $U_n$ contains two
real numbers namely $e_{n,0}=1$ and $e_{n,n/2}=-e_{n,0}=-1$.
\\[0.1in] 
In conclusion, we get:
$$
U_n\cap\mathbb{R}=\left\{\begin{array}{ccc}
\{1\} & \mathrm{if} & n\equiv 1\pmod 2
\\
\{1,-1\} & \mathrm{if} & n\equiv 0\pmod 2
\end{array}
\right.
$$
In the following, we denote $I_n$ the integer interval:
$$
I_n=\llbracket 0,n-1\rrbracket
$$
Let $\mathbb{Z}[S_n]$ the subring of $\mathbb{C}$ generated by $S_n$
over the subring $\mathbb{Z}$ of integers of $\mathbb{C}$ with $S_n$
given by $S_n=\mathcal{G}_n\setminus\{e\in\mathcal{G}_n\,:\,e\in\mathbb{Z}\}$
where $\mathcal{G}_n$ is a maximal 
family of linearly independent elements of $U_n$ over
$\mathbb{C}$:
$$
\mathbb{Z}[S_n]=\{a_0e_{n,0}+a_1e_{n,1}+\ldots+a_{n-1}e_{n,n-1}
\,:\,a_k\in\mathbb{Z},\,
k=0,1,\ldots,n-1\}
$$
We can notice that for $n=1,2$, $U_{n}\subseteq \{1,-1\}$ and so
$\mathbb{Z}[S_n]=\mathbb{Z}$. 
For $n\geq 3$, since ($k\in I_n$):
$$
e^n_{n,k}=e_{n,0}=1
$$
and:
$$
e_{n,0}+e_{n,1}+\ldots+e_{n,n-1}=0
$$
the family $\{e_{n,k}\}_{k\in I_n}$ with $n\in\mathbb{N}^{\star}$
which generates $\mathbb{Z}[S_n]$, is not free. It results that
for $n\geq 3$, $\mathbb{Z}[S_n]$ is generated by the family
$\{e_{n,0},e_{n,1},\ldots,e_{n,n-2}\}$ if $n\equiv 1\pmod 2$ and
$\mathbb{Z}[S_n]$ is generated by the family 
$\{e_{n,0},e_{n,1},\ldots,e_{n,n-2}\}\setminus\{e_{n,n/2}\}$ if $n\equiv 0\pmod 2$.
\\[0.1in]
For instance, 
for $n=3$, setting $j=e_{3,1}=e^{\frac{2i\pi}{3}}$, we have ($j^3=1$):
$$
1+j+j^2=0
$$
and:
$$
j^2=j^{\star}=-e_{3,0}-j
$$
Notice that:
$$
jj^{\star}=jj^2=j^3=1
$$
The subring $\mathbb{Z}[S_3]$ of $\mathbb{C}$ is generated by
$S_3=\mathcal{G}_3\setminus\{e_{3,0}\}=\{j\}$ 
where $\mathcal{G}_3=\{e_{3,0},j\}$
with $e_{3,0}=1$: 
$$
\mathbb{Z}[S_3]=\mathbb{Z}[j]=\{ae_{3,0}+bj\,:\,a,b\in\mathbb{Z}\}
$$
Since $\{e_{3,0},j\}$ is free and is maximal in $U_3$, $\mathbb{Z}[S_3]$ has
a basis namely $\{e_{3,0},j\}$.
\\[0.1in]
In this case, we have ($e^{i\pi}=-1$):
$$
U(\mathbb{Z}[j])=U_3\cup e^{i\pi}U_3=\{e_{3,0},-e_{3,0},j,-j,j^{\star},-j^{\star}\}
$$
Indeed, let $ae_{3,0}+bj\in U(\mathbb{Z}[j])$. Then, there exists
$a',b'\in\mathbb{Z}$ such that:
$$
(ae_{3,0}+bj)(a'e_{3,0}+b'j)=e_{3,0}
$$
$$
aa'e_{3,0}+(ab'+a'b)j+bb'j^2=e_{3,0}
$$
$$
aa'e_{3,0}+(ab'+a'b)j-bb'(e_{3,0}+j)=e_{3,0}
$$
$$
(aa'-bb')e_{3,0}+(ab'+a'b-bb')j=e_{3,0}
$$
So, since $\{e_{3,0},j\}$ is a basis of $\mathbb{Z}[S_3]$, we have:
$$
\left\{\begin{array}{c}
aa'-bb'=1
\\
ab'+ba'-bb'=0
\end{array}\right.
$$
The equality $ab'+ba'-bb'=0$ can be rewritten as:
$$
bb'=ab'+ba'
$$
The equation $aa'-bb'=1$ means that $\gcd(a,b)=1$. Moreover, the equation
$aa'-bb'=1$ can be rewritten like $a(a'-b')-(b-a)b'=1$ meaning that
also $\gcd(a,\mathrm{abs}(b-a))=1$. Besides the equation
$ab'+ba'-bb'=0$ can be rewritten as:
$$
b'(b-a)=a'b
$$
Since $\gcd(a,\mathrm{abs}(b-a))=1$, from the Euclid's lemma, $b-a|a'$
and $b|b'$. So, there exists an integer $k$ such that:
$$
a'=(b-a)k
$$
$$
b'=bk
$$
Using the equation $aa'-bb'=1$, it implies that:
$$
(ab-(a^2+b^2))k=1
$$
So, either $k=1$ or $k=-1$. If $k=1$, then $ab-(a^2+b^2)=1$ which is
equivalent to $a^2-ab+b^2+1=0$. This equation of degree $2$ in
variable $a$ has no solution in $\mathbb{R}$ and so in $\mathbb{Z}$
since its discriminant is $\Delta=-3b^2-4<0$. So, the case $k=1$ is
not possible. If $k=-1$, then $ab-(a^2+b^2)=-1$ which is
equivalent to $a^2-ab+b^2-1=0$. The discriminant of this equation of
degree $2$ in variable $a$ is $\Delta=4-3b^2$. Since $b$ is an
integer, either $b=-1$, either $b=0$ or $b=1$. Notice that for $b=0$
or $b=\pm 1$, $\Delta$ is strictly positive and is a perfect
square. It gives rise to the possible values for $a$:
$$
b=-1\,\Rightarrow\,a=0\,\,\,\,\mathrm{or}\,\,\,\,a=-1
$$
$$
b=0\,\Rightarrow\,a=-1\,\,\,\,\mathrm{or}\,\,\,\,a=1
$$
$$
b=1\,\Rightarrow\,a=0\,\,\,\,\mathrm{or}\,\,\,\,a=1
$$
Therefore, the elements of $U(\mathbb{Z}[j])$ are:
$$
e_{3,0},-e_{3,0},j,-j,j^{\star},-j^{\star}
$$
We can notice that $\mathbb{Z}[j]=\mathbb{Z}\oplus j\mathbb{Z}$. It is because
any element $x$ of $\mathbb{Z}[j]$ is written in an unique way as 
$x=au_{3,0}+bj$ with $a,b\in\mathbb{Z}$ and because $\mathbb{Z}\cap
j\mathbb{Z}=\{0\}$. Indeed, the existence of $a,b\in\mathbb{Z}$ stems
from the algebraic structure of $\mathbb{Z}[j]$. For proving the
uniqueness of $a,b\in\mathbb{Z}$ such that $x=au_{3,0}+bj$, let consider two
other integers $a',b'$ which verify:
$$
ae_{3,0}+bj=a'e_{3,0}+b'j
$$
Since $\{e_{3,0},j\}$ is a basis of $\mathbb{Z}[S_3]$, it gives $a=a'$
and $b=b'$ meaning that $x$ is written uniquely as 
$x=a+bj$ with $a,b\in\mathbb{Z}$. Moreover, let $a,b\in\mathbb{Z}$
such that:
$$
ae_{3,0}=bj
$$
Then, since $\{e_{3,0},j\}$ is a basis of $\mathbb{Z}[S_3]$, $a=b=0$
meaning that $\mathbb{Z}\cap j\mathbb{Z}=\{0\}$. 
So, for any subset $\mathcal{W}$ of 
$\mathbb{Z}[j]$, there exist two subsets $\mathcal{X},\mathcal{Y}$ of
$\mathbb{Z}$ such that $\mathcal{W}=\mathcal{X}\oplus
j\mathcal{Y}$. Notice that $\mathcal{X},\mathcal{Y}$ are unique since
if $\mathcal{X}'\oplus j\mathcal{Y}'=\mathcal{X}\oplus j\mathcal{Y}$,
then $\mathcal{X}'=\mathcal{X}$ and $\mathcal{Y}'=\mathcal{Y}$. Let
$\mathcal{L}$ an ideal of $\mathbb{Z}[j]$. Since
$\mathcal{L}$ is a subset of $\mathbb{Z}[j]$, there exist two subsets
$\mathcal{I},\mathcal{J}$ of $\mathbb{Z}$ such that
$\mathcal{L}=\mathcal{I}\oplus j\mathcal{J}$. Since
$\mathcal{L}$ is an additive subgroup of $\mathbb{Z}[j]$ which is
stable by multiplication, then $\mathcal{I},\mathcal{J}$ should be
additive subgroups of $\mathbb{Z}$ which are stable by multiplication. So,
$\mathcal{I},\mathcal{J}$ are ideals of $\mathbb{Z}$. Reciprocally, if
$\mathcal{L}=\mathcal{I}\oplus j\mathcal{J}$ where
$\mathcal{I},\mathcal{J}$ are ideals of $\mathbb{Z}$, it is obvious
that $\mathcal{L}$ is also an ideal of $\mathbb{Z}[j]$.
\\[0.1in]
Since in $\mathbb{Z}$, any ideal is principal, any ideal of
$\mathbb{Z}[j]$ has the form ($a,b\in\mathbb{Z}$):
$$
\mathcal{L}_{a,b}=\{ae_{3,0}x+byj\,:\,x,y\in\mathbb{Z}\}
$$
Moreover, we know that for $a,b$ in $\mathbb{Z}$, there exists
two integers $s,t$ such that:
$$
a=s\gcd(a,b)
$$ 
and:
$$
b=t\gcd(a,b)
$$
Therefore, any element of $\mathcal{L}_{a,b}$ can be expressed as
($x,y\in\mathbb{Z}$):
$$
ae_{3,0}x+byj=\gcd(a,b)(ae_{3,0}s+btj)
$$
It results that $\mathcal{L}_{a,b}$ is generated by $\gcd(a,b)$ and is
so principal. We conclude that $\mathbb{Z}[j]$ is a principal entire
subring of $\mathbb{C}$.
\\[0.1in]
The expression for the magnitude function is given by:
$$
N(ae_{3,0}+bj)=\sqrt{(ae_{3,0}+bj)(ae_{3,0}+bj^{\star})}=
\sqrt{a^2e^2_{3,0}+ab(j+j^{\star})+b^2jj^{\star}}
$$
$$
N(ae_{3,0}+bj)=\sqrt{(a^2-2ab+b^2)e_{3,0}}=\sqrt{(a-b)^2e_{3,0}}
$$
$$
N(ae_{3,0}+bj)=\mathrm{abs}(a-b)\sqrt{e_{3,0}}=\mathrm{abs}(a-b)e_{3,0}
=\mathrm{abs}(a-b)
$$
Thus, we have:
$$
\ker N=\{a(u_{3,0}+j)\,:\,a\in\mathbb{Z}\}
$$
It follows that the expression for the unit function is given by
($a\neq b$):
$$
u(ae_{3,0}+bj)=\frac{1}{\mathrm{abs}(a-b)}(a+bj)
$$
For $n=4$, we know that $i=e_{4,1}=e^{\frac{i\pi}{2}}$.
The subring $\mathbb{Z}[S_4]$ of $\mathbb{C}$ is generated by
$S_4=\mathcal{G}_4\setminus\{e_{4,0}\}=\{i\}$
where $\mathcal{G}_4=\{e_{4,0},i,e_{4,2}\}\setminus\{e_{4,2}\}$ namely
$\mathcal{G}_4=\{e_{4,0},i\}$ with $e_{4,0}=1$. Since $\{u_{4,0},i\}$ is free and is
maximal in $U_4$, 
$\mathbb{Z}[S_4]$ has a basis namely $\mathcal{G}_4=\{e_{4,0},i\}$. 
So, $\mathbb{Z}[S_4]$ is the subring of Gaussian integers:
$$
\mathbb{Z}[S_4]=\mathbb{Z}[i]=\{ae_{4,0}+bi\,:\,a,b\in\mathbb{Z}\}
$$
In this case, we have:
$$
U(\mathbb{Z}[i])=U_4=\{e_{4,0},-e_{4,0},i,-i\}
$$
Indeed, let $ae_{4,0}+bi\in U(\mathbb{Z}[i])$. Then, there exists
$a',b'\in\mathbb{Z}$ such that:
$$
(ae_{4,0}+bi)(a'e_{4,0}+b'i)=e_{4,0}
$$
$$
(aa'-bb')e_{4,0}+(ab'+ba')i=e_{4,0}
$$
So:
$$
\left\{\begin{array}{c}
aa'-bb'=1
\\
ab'+ba'=0
\end{array}\right.
$$
The first equation means that $\gcd(a,b)=1$ and so $a$ and $b$ are
relatively primes. Using this fact, the second equation which can be
rewritten $ab'=-ba'$, implies from the Euclid's lemma that $a|a'$ and
$b|b'$. So, there exists an integer $k$ in $\mathbb{Z}$ such that:
$$
a'=ak
$$
and:
$$
b'=bk
$$
From the equation $aa'-bb'=1$, it gives:
$$
k(a^2+b^2)=1
$$
So, either $k=1$ or $k=-1$. If $k=-1$, then $a^2+b^2=-1$ which is not
possible. It remains $k=1$. It gives ($k=1$):
$$
\begin{array}{ccc}
a=\pm 1 & \mathrm{and} & b=0
\\
& \mathrm{or} &
\\
a=0 & \mathrm{and} & b=\pm 1
\end{array}
$$
Therefore, the elements of $U(\mathbb{Z}[i])$ are:
$$
e_{4,0},-e_{4,0},i,-i
$$
Since $\mathbb{Z}[i]=\mathbb{Z}\oplus i\mathbb{Z}$ (it is because
any element $x$ of $\mathbb{Z}[i]$ is written in an unique way as $x=a+bi$ with
$a,b\in\mathbb{Z}$ and because $\mathbb{Z}\cap i\mathbb{Z}=\{0\}$),
for any subset $\mathcal{W}$ of 
$\mathbb{Z}[i]$, there exist two subsets $\mathcal{X},\mathcal{Y}$ of
$\mathbb{Z}$ such that $\mathcal{W}=\mathcal{X}\oplus
i\mathcal{Y}$. Notice that $\mathcal{X},\mathcal{Y}$ are unique since
if $\mathcal{X}'\oplus i\mathcal{Y}'=\mathcal{X}\oplus i\mathcal{Y}$,
then $\mathcal{X}'=\mathcal{X}$ and $\mathcal{Y}'=\mathcal{Y}$. Let
$\mathcal{L}$ an ideal of $\mathbb{Z}[i]$. Since
$\mathcal{L}$ is a subset of $\mathbb{Z}[i]$, there exist two subsets
$\mathcal{I},\mathcal{J}$ of $\mathbb{Z}$ such that
$\mathcal{L}=\mathcal{I}\oplus i\mathcal{J}$. Since
$\mathcal{L}$ is an additive subgroup of $\mathbb{Z}[i]$ which is
stable by multiplication, then $\mathcal{I},\mathcal{J}$ should be
additive subgroups of $\mathbb{Z}$ which are stable by multiplication. So,
$\mathcal{I},\mathcal{J}$ are ideals of $\mathbb{Z}$. Reciprocally, if
$\mathcal{L}=\mathcal{I}\oplus i\mathcal{J}$ where
$\mathcal{I},\mathcal{J}$ are ideals of $\mathbb{Z}$, it is obvious
that $\mathcal{L}$ is also an ideal of $\mathbb{Z}[i]$.
\\[0.1in]
Since in $\mathbb{Z}$, any ideal is principal, any ideal of
$\mathbb{Z}[i]$ has the form ($a,b\in\mathbb{Z}$):
$$
\mathcal{L}_{a,b}=\{ae_{4,0}x+byi\,:\,x,y\in\mathbb{Z}\}
$$
Moreover, we know that for $a,b$ in $\mathbb{Z}$, there exists
two integers $s,t$ such that:
$$
a=s\gcd(a,b)
$$ 
and:
$$
b=t\gcd(a,b)
$$
Therefore, any element of $\mathcal{L}_{a,b}$ can be expressed as
($x,y\in\mathbb{Z}$):
$$
ae_{4,0}x+byi=\gcd(a,b)(ae_{4,0}s+bti)
$$
It results that $\mathcal{L}_{a,b}$ is generated by $\gcd(a,b)$ and is
so principal. We conclude that $\mathbb{Z}[i]$ is a principal entire
subring of $\mathbb{C}$.
\\[0.1in]
The expression for the magnitude function is given by:
$$
N(ae_{4,0}+bi)=\sqrt{(ae_{4,0}+bi)(ae_{4,0}-bi)}=\sqrt{a^2-abi+bai-b^2i^2}
$$
$$
N(ae_{4,0}+bi)=\sqrt{a^2+b^2}
$$
Thus, we have:
$$
\ker N=\{0\}
$$
It follows that the expression for the unit function is given by ($a,b\neq 0$):
$$
u(ae_{4,0}+bi)=\frac{1}{\sqrt{a^2+b^2}}(a+bi)
$$
Let consider a natural number $n\geq 3$ which is odd. Let prove by induction
that any family $\{e_{n,0},e_{n,1},\ldots,e_{n,k}\}$ for $k\in \llbracket
0,n-2\rrbracket$ is free. It will prove that the family
$\{e_{n,0},e_{n,1},\ldots,e_{n,n-2}\}$ forms a 
basis of $\mathbb{Z}[S_n]$ when $n$ is an odd positive integer. The case
where $n$ is even can be done in a similar way.
\\[0.1in]
When $k=0$, we have $e_{n,0}=1$ and it is obvious that $\{e_{n,0}\}$ is free.
Let assume that for an integer $k$ such that $k\in\llbracket
0,n-3\rrbracket$, the family $\{e_{n,0},e_{n,1},\ldots,e_{n,k}\}$
is free. If $a_0,a_1,\ldots,a_k,a_{k+1}$ are integers such that:
$$
a_0e_{n,0}+a_1e_{n,1}+\ldots+a_ke_{n,k}+a_{k+1}e_{n,k+1}=0
$$
then since $e_{n,i}=e^i_{n,1}$ for $i\in\mathbb{Z}$, it comes that:
$$
a_0e_{n,0}+a_1e_{n,1}+\ldots+a_ke^k_{n,1}+a_{k+1}e^{k+1}_{n,1}=0
$$
Multiplying the left and the right hand sides of this equation 
by $e^{n-k-1}_{n,1}$, it gives ($e^n_{n,1}=1$):
$$
e^{n-k-1}_{n,1}\left\{a_0e_{n,0}+a_1e_{n,1}+\ldots+a_ke^k_{n,1}\right\}+a_{k+1}=0
$$
$$
e^{n-k-1}_{n,1}\left\{a_0e_{n,0}+a_1e_{n,1}+\ldots+a_ke^k_{n,1}\right\}=-a_{k+1}
$$
By conjugation, we have also:
$$
(e^{n-k-1}_{n,1})^{\star}
\left\{a_0e_{n,0}+a_1e^{\star}_{n,1}+\ldots+a_k(e^k_{n,1})^{\star}\right\}=-a_{k+1}
$$
It results that:
$$
a_0\mathrm{Im}(e^{n-k-1}_{n,1})+a_1\mathrm{Im}(e^{n-k}_{n,1})+\ldots
+a_k\mathrm{Im}(e^{n-1}_{n,1})=0
$$
$$
a_0\mathrm{Im}(e_{n,n-k-1})+a_1\mathrm{Im}(e_{n,n-k})+\ldots
+a_k\mathrm{Im}(e_{n,n-1})=0
$$
Since the family $\{e_{n,0},e_{n,1},\ldots,e_{n,k}\}$
is free from the assumption, from the theorem (\ref{t4.58}), 
the family ($k\in\llbracket 0,n-3\rrbracket$ and $n\geq 3$):
$$
\{\mathrm{Im}(e_{n,n-k-1}),\mathrm{Im}(e_{n,n-k}),\ldots,
\mathrm{Im}(e_{n,n-1})\}
$$ 
is also free. It implies that:
$$
a_0=a_1=\ldots=a_k=0
$$
and so:
$$
a_{k+1}=0
$$
We deduce that if $\{e_{n,0},e_{n,1},\ldots,e_{n,k}\}$
is free for $k\in\llbracket 0,n-3\rrbracket$, then
$\{e_{n,0},e_{n,1},\ldots,e_{n,k},e_{n,k+1}\}$ is also free. It 
achieved the proof by induction of the property that any family
$\{e_{n,0},e_{n,1},\ldots,e_{n,k}\}$ for $k\in \llbracket 
0,n-2\rrbracket$ is free. Therefore, the family
$\{e_{n,0},e_{n,1},\ldots,e_{n,n-2}\}$ forms a 
basis of $\mathbb{Z}[S_n]$ when $n$ is an odd positive integer. A
similar reasoning implies that the family
$\{e_{n,0},e_{n,1},\ldots,e_{n,n-2}\}\setminus\{e_{n,n/2}\}$ forms a 
basis of $\mathbb{Z}[S_n]$ when $n$ is a non-zero even positive integer.
\\[0.1in]
Thus, $\mathbb{Z}[S_n]$ with $n\geq 3$ is a free module such that ($n\geq 3$):
$$
\mathbb{Z}[S_n]=\left\{\begin{array}{ccc}
\mathbb{Z}[e_{n,1},\ldots,e_{n,n/2-1},e_{n,n/2+1},\ldots,e_{n,n-2}]
& \mathrm{if} & n\equiv 0\pmod 2
\\
\mathbb{Z}[e_{n,1},\ldots,e_{n,n-2}]
& \mathrm{if} & n\equiv 1\pmod 2
\end{array}
\right.
$$
If $n\geq 3$ is odd, then the expression for the magnitude function is given by:
$$
N(a_0e_{n,0}+a_1e_{n,1}+\ldots+a_{n-2}e^{n-2}_{n,1})
=\sqrt{{\displaystyle\sum^{n-2}_{i=0}}a^2_ie_{n,0}
+{\displaystyle\sum^{n-2}_{i=0}\sum^{n-2}_{j\neq i}}a_ia_j
(e^i_{n,1}(e^j_{n,1})^{\star}
+(e^i_{n,1})^{\star}e^j_{n,1})} 
$$
If $n\geq 3$ is even, then the expression for the magnitude function
is given by: 
$$
N(a_0e_{n,0}+a_1e_{n,1}+\ldots+a_{n/2-1}e^{n/2-1}_{n,1}+a_{n/2+1}e^{n/2+1}_{n,1}
+\ldots+a_{n-2}e^{n-2}_{n,1})
$$
$$
=\sqrt{{\displaystyle\sum^{n-2}_{i\neq n/2}}a^2_ie_{n,0}
+{\displaystyle\sum^{n-2}_{i\neq n/2}\sum^{n-2}_{j\neq i,n/2}}
a_ia_j(e^i_{n,1}(e^j_{n,1})^{\star}
+(e^i_{n,1})^{\star}e^j_{n,1})} 
$$

\noindent
More generally, let $n\in\mathbb{N}^{\star}$ and let 
$\{e_1,\ldots,e_n\}$ a finite family of elements of
$\mathbb{C}$ such that ($i,j=1,\ldots,n$):
$$
e_n=e_0=1
$$
$$
e_ie_0=e_0e_i=e_i
$$
$$
e_ie_j=e_je_i=e_k
$$
with:
$$
k\equiv\left\{\begin{array}{ccc}
i+j\pmod {\frac{n}{2}} & \mathrm{if} & n\equiv 0\pmod 2
\\
i+j\pmod n & \mathrm{if} & n\equiv 1\pmod 2
\end{array}
\right.
$$
and $e_1,\ldots,e_n$ which are linearly independent over $\mathbb{C}$,
namely:
$$
c_1e_1+\ldots+c_ne_n=0\,\Rightarrow\,c_1=\ldots=c_n=0
$$
Let $A$ be an entire subring of
$\mathbb{C}$ such that:
$$
a^{\star}=a,\,\,\,\,\forall\,\,a\in A
$$
and let $G$ be the subset
$\{e_0,e_1,\ldots,e_{n-1}\}$ with $n\in\mathbb{N}^{\star}$, of
$\mathbb{C}$. Notice that from the properties satisfied
by the $e_i$ with $i=0,1,\ldots,n-1$, $G$ is an abelian group. 
We define $A[S]$ where $S=\mathcal{G}\setminus\{e_0\}$ and
$\mathcal{G}=G$ is the maximal family of linearly independent elements of
$G$ over $\mathbb{C}$, as ($n\in\mathbb{N}^{\star}$):  
$$
A[S]=\{a_0e_0+a_1e_1+\ldots+a_{n-1}e_{n-1}\,:\,a_0,a_1,\ldots,a_n\in A\}
$$
Then, $A[S]$ is an entire subring of $\mathbb{C}$, 
which is generated by $S$ over $A$. We can
notice that the family $\{e_0,\ldots,e_{n-1}\}$ with
$n\in\mathbb{N}^{\star}$ forms a basis 
of $A[S]$. 
\\[0.1in]
In $A[S]$, any element $e_i$ with $i=0,1,\ldots,n-1$ is
invertible since $G$ is a group. The inverse of $e_i$ with
$i=0,1,\ldots,n-1$ is given by $e_{n-i}$. Indeed, we have ($i=0,1,\ldots,n-1$):
$$
e_ie_{n-i}=e_0
$$
Moreover, we have:
$$
e^n_i=e_i\ldots e_i=e_{ni}=e^i_n=e^i_0=e_0
$$
So, $G$ is a cyclic finite subgroup of $A[S]$ of order $n$. 
\\[0.1in]
Notice that ($i=0,1,\ldots,n-1$):
$$
e_i=e^i_1
$$
So, a generator of $\{e_0,\ldots,e_{n-1}\}$ is $e_1$.
Besides, we have ($i=0,1,\ldots,n-1$):
$$
N(e^n_i)=N(e_i)^n=N(e_0)=N(1)=1
$$
Since $||N(e_i)||_{\mathbb{C}}=N(e_i)$, it results that ($i=0,1,\ldots,n-1$):
$$
N(e_i)=1
$$
Therefore, we have ($i=0,1,\ldots,n-1$):
$$
u(e_i)=e_i
$$

\section{{An algebra of entire ring generated by the generators 
of a Lie algebra}}\label{s11}

A more general framework is to consider 
a finite maximal free family $\mathfrak{e}=\{e_1,\ldots,e_n\}$ with
$n\in\mathbb{N}^{\star}$ of generators of a Lie algebra\index{Lie algebra} 
$\mathfrak{g}$ associated\cite{bh},\cite{awk} to a finite-dimensional complex Lie
group\index{Lie group}
$G$ such that ($i,j=1,\ldots,n$ and $n\in\mathbb{N}^{\star}$):
$$
e_n=e_0
$$
$$
e_ie_j=c^k_{ij}e_k
$$
where the Einstein summation over repeated indices $k$ from $k=0$ to
$n-1$ with $n\in\mathbb{N}^{\star}$, is understood. The set $\mathfrak{e}$ is a
basis of the associated Lie algebra $\mathfrak{g}$ of the Lie group
$G$. 
\\[0.1in]
We assume that the elements $c^k_{ij}$s called the structure constants
of $\mathfrak{g}$ with respect to basis $\mathfrak{e}$, belong to an
entire subring $A$ of $\mathbb{C}$ such that $A=\mathrm{Re}(A)$. Denoting 
$\mathfrak{s}=\mathfrak{e}\setminus\{e_n\}=\{e_1,\ldots,e_{n-1}\}$, the set
$A[\mathfrak{s}]$ is the extension of ring $A$ which includes all
linear combinations of elements of $\mathfrak{g}$ with coefficients in
$A$. It is understood that 
$A[\mathfrak{s}]$ is generated by $\mathfrak{s}$. Similarly as in
the case where the Lie algebra $\mathfrak{g}$ 
associated to the Lie group $G$ is replaced by an
abelian group (see above), a star operation is defined on
$A[\mathfrak{s}]$. Since the structure constants $c^k_{ij}$s
with $i,j,k=0,1,\ldots,n-1$, belong to
$A=\mathrm{Re}(A)$, we have ($i,j,k=0,1,\ldots,n-1$ with $n\in\mathbb{N}^{\star}$):
$$
(c^k_{ij})^{\star}=c^k_{ij}
$$
We define another finite family $\{e'_1,\ldots,e'_n\}$ with
$n\in\mathbb{N}^{\star}$ by ($i,j=1,\ldots,n$ and
$n\in\mathbb{N}^{\star}$):
$$
e'_n=e'_0=e_0
$$
$$
e'_i=e_i-v_ie_0=(\delta^j_i-v_i\delta^j_0)e_j
$$
$$
(e'_i)^{\star}=-e'_i
$$
with $v_1,\ldots,v_{n-1}\in\mathbb{C}$ and $v_n=0$ with $n\in\mathbb{N}^{\star}$.
\\[0.1in]
The family $\{e'_1,\ldots,e'_n\}$ is free. Indeed, if
($a'_0,a'_1,\ldots,a'_{n-1}\in\mathbb{C}$ and $n\in\mathbb{N}^{\star}$):
$$
a'_0e'_0+a'_1e'_1+\ldots+a'_{n-1}e'_{n-1}=0
$$
then ($a'_0,a'_1,\ldots,a'_{n-1}\in\mathbb{C}$ and $n\in\mathbb{N}^{\star}$):
$$
a'_0e_0+a'_1(e_1-v_1e_0)+\ldots+a'_{n-1}(e_{n-1}-v_{n-1}e_0)=0
$$
$$
a_0e_0+a_1e_1+\ldots+a_{n-1}e_{n-1}=0
$$
with ($a'_0,a'_1,\ldots,a'_{n-1}\in\mathbb{C}$ and $n\in\mathbb{N}^{\star}$):
$$
a_0=a'_0-(v_1a'_1+\ldots+v_{n-1}a'_{n-1})
$$
and ($i=1,\ldots,n$ and $n\in\mathbb{N}^{\star}$):
$$
a'_i=a_i
$$
Since the family $\{e_1,\ldots,e_n\}$ is free, it implies that
($n\in\mathbb{N}^{\star}$):
$$
a_0=a_1=\ldots=a_{n-1}=0
$$
It gives ($i=1,\ldots,n$ and $n\in\mathbb{N}^{\star}$):
$$
a_i=0
$$
and:
$$
a'_0=a_0=0
$$
Therefore the family $\{e'_1,\ldots,e'_n\}$ is free.
\\[0.1in]
Moreover, it comes that ($i,j=1,\ldots,n$ and $n\in\mathbb{N}^{\star}$):
$$
e_i=e'_i+v_ie_0=(\delta^j_i+v_i\delta^j_0)e'_j
$$
Then, we have ($i,j=1,\ldots,n$ and $n\in\mathbb{N}^{\star}$):
$$
e_ie_j=(\delta^l_i+v_i\delta^l_0)e'_l(\delta^m_j+v_j\delta^m_0)e'_m
$$
$$
e_ie_j=\delta^l_i\delta^m_je'_le'_m+v_j\delta^l_i\delta^m_0e'_le'_m
+v_i\delta^l_0\delta^m_je'_le'_m
+v_iv_j\delta^l_0\delta^m_0e'_le'_m
$$
$$
e_ie_j=e'_ie'_j+v_je'_i+v_ie'_j+v_iv_je^2_0=e'_ie'_j
+(v_i\delta^k_j+v_j\delta^k_i)e'_k
$$
Since $e_ie_j=c^k_{ij}e_k$ for $i,j=1,\ldots,n$ and
$n\in\mathbb{N}^{\star}$, we have also ($i,j=1,\ldots,n$ and
$n\in\mathbb{N}^{\star}$):
$$
e_ie_j=c^k_{ij}(\delta^l_k+v_k\delta^l_0)e'_l
$$
$$
e_ie_j=c^k_{ij}\delta^l_ke'_l+c^k_{ij}v_k\delta^l_0e'_l
$$
$$
e_ie_j=c^k_{ij}e'_k+c^l_{ij}v_l\delta^k_0e'_k
$$
So ($i,j=1,\ldots,n$ and $n\in\mathbb{N}^{\star}$):
$$
e'_ie'_j=d^k_{ij}e'_k
$$
with ($i,j,k=1,\ldots,n$ and $n\in\mathbb{N}^{\star}$):
$$
d^k_{ij}=c^k_{ij}+c^l_{ij}v_l\delta^k_0-(v_i\delta^k_j+v_j\delta^k_i)
$$
Notice that ($i,j=0,1,\ldots,n-1$):
$$
(e'_ie'_j)^{\star}=(d^k_{ij}e'_k)^{\star}=-(d^k_{ij})^{\star}e'_k
$$
$$
(e'_ie'_j)^{\star}=(e'_j)^{\star}(e'_i)^{\star}=e'_je'_i=d^k_{ji}e'_k
$$
It results that ($i,j,k=0,1,\ldots,n-1$):
$$
(d^k_{ij})^{\star}=-d^k_{ji}
$$
The conjugate of $e_i$ for $i=0,1,\ldots,n-1$ with
$n\in\mathbb{N}^{\star}$ is given by ($i=0,1,\ldots,n-1$ and
$n\in\mathbb{N}^{\star}$):
$$
e^{\star}_i=v^{\star}_ie_0-e'_i
$$
$$
e^{\star}_i=v^{\star}_ie_0-(e_i-v_ie_0)
$$
$$
e^{\star}_i=-e_i+(v_i+v^{\star}_i)e_0
$$
It follows that ($i=0,1,\ldots,n-1$ with
$n\in\mathbb{N}^{\star}$):
$$
e_ie^{\star}_i=-e^2_i+(v_i+v^{\star}_i)e_ie_0=-e^2_i
+(v_i+v^{\star}_i)c^k_{i0}e_k
$$
and ($i=0,1,\ldots,n-1$ with
$n\in\mathbb{N}^{\star}$):
$$
e^{\star}_ie_i=-e^2_i+(v_i+v^{\star}_i)e_0e_i=-e^2_i
+(v_i+v^{\star}_i)c^k_{0i}e_k
$$
So, it comes that:
$$
e_ie^{\star}_i=e^{\star}_ie_i+(v_i+v^{\star}_i)(c^k_{i0}-c^k_{0i})e_k
$$ 
It results that the function $N$ is not well defined on
$A[\mathfrak{s}]$ unless ($i=0,1,\ldots,n-1$; $k=0,\ldots,n-1$ with
$n\in\mathbb{N}^{\star}$):
$$
c^k_{i0}=c^k_{0i}
$$
Or, since $e_ie_0=-e_0e_i$, we have $c^k_{i0}=-c^k_{0i}$. Therefore, the function $N$ is well defined on $A[\mathfrak{s}]$ if ($i=0,1,\ldots,n-1$;
$k=0,1,\ldots,n-1$ with $n\in\mathbb{N}^{\star}$):
$$
c^k_{i0}=c^k_{0i}=0
$$
But, in this case, since $\mathfrak{g}$ is Lie algebra, we have:
$$
e_ie^{\star}_i=e^{\star}_ie_i=-e^{\star}_ie_i
$$
meaning that:
$$
e_ie^{\star}_i=e^{\star}_ie_i=0
$$
It involves that the function $N$ which can be defined on any element of $A[\mathfrak{s}]$ which commutes with its conjugate cancels. We conclude that $N$ is either degenerate on a subset of  $A[\mathfrak{s}]$ or is not defined on $A[\mathfrak{s}]$.

\section{{Acknowledgements}}

The author would like to thank Aleks Kleyn for helpful comments. 

\printindex

\end{document}